\documentclass[a4paper, 10pt]{scrartcl}
\usepackage[twoside, bindingoffset=1cm, top=3cm, bottom=3.4cm, width=14cm]{geometry}

\makeatletter
\DeclareOldFontCommand{\rm}{\normalfont\rmfamily}{\mathrm}
\DeclareOldFontCommand{\bf}{\normalfont\bfseries}{\mathbf}
\DeclareOldFontCommand{\it}{\normalfont\itshape}{\mathit}
\makeatother

\usepackage[T1]{fontenc}
\usepackage{ucs}
\usepackage[utf8x]{inputenc}

\usepackage[reqno,intlimits]{mathtools}

\usepackage{amssymb, accents, amsthm, mathrsfs, dsfont, enumitem, cases}
\usepackage{enumitem}
\newlist{conditions}{enumerate}{1}
\setlist[conditions]{label=(\alph*),ref=(\alph*)}
\makeatletter
\newcommand{\mylabel}[2]{#2\def\@currentlabel{#2}\label{#1}}
\makeatother
\usepackage{stackrel} 

\usepackage{color}

\usepackage[bookmarks, colorlinks, citecolor=blue]{hyperref}

\usepackage{graphicx, float}
\usepackage{ragged2e}
\usepackage[rightcaption]{sidecap}
\usepackage{wrapfig}
\usepackage{subcaption}
\usepackage[labelfont=bf, format=plain, font=footnotesize]{caption}

\usepackage{pdfpages, scrpage2}
\pagestyle{scrheadings}
\automark{chapter}

\newtheorem{thm}{Theorem}[section]
\newtheorem{cor}[thm]{Corollary}
\newtheorem{lem}[thm]{Lemma}
\newtheorem{prop}[thm]{Proposition}
\newtheorem{ass}[thm]{Assumption}
\newtheorem{defn}[thm]{Definition}

\newtheorem{remark}[thm]{Remark}

\expandafter\let\expandafter\oldproof\csname\string\proof\endcsname
\let\oldendproof\endproof
\renewenvironment{proof}[1][\proofname]{%
  \oldproof[{\it #1}]%
}{\oldendproof}

\renewcommand{\d}{\mathrm{d}}

\newcommand{\del}{\partial}
\newcommand{\supp}{\text{supp}}

\newcommand{\N}{\mathbb{N}}
\newcommand{\R}{\mathbb{R}}
\newcommand{\Z}{\mathbb{Z}}
\newcommand{\E}{\mathbb{E}}
\renewcommand{\vec}{\boldsymbol}
\newcommand{\eps}{\epsilon}
\renewcommand{\Pr}{\mathbb{P}}

\newcommand{\Pas}{$\Pr$-a.s.\ }
\newcommand{\Prp}{\Pr}
\newcommand{\Pasp}{$\Prp$-a.s.\ }
\newcommand{\edges}{\mathfrak{E}}
\newcommand{\Open}{\mathfrak{E}_{\rm O}}
\newcommand{\C}{\mathscr{C}}
\newcommand{\Cinf}{\C_{\infty}}
\newcommand{\D}{\mathscr{D}}

\let\originalleft\left
\let\originalright\right
\renewcommand{\left}{\mathopen{}\mathclose\bgroup\originalleft}
\renewcommand{\right}{\aftergroup\egroup\originalright}

\title{\large Localization of the principal Dirichlet eigenvector in the heavy-tailed random conductance model}
  \date{}
  \author{\small Franziska Flegel\\\small Weierstrass Institute Berlin}

\begin{document}
\DeclareGraphicsExtensions{.pdf}
\numberwithin{equation}{section}
\numberwithin{figure}{section}

\rohead{Principal eigenvector localization in the RCM}
\rehead{F.\ Flegel}
\lohead{F.\ Flegel}
\lehead{Principal eigenvector localization in the RCM}
\setheadsepline{.5pt}

\maketitle

\vspace{-1cm}
\begin{abstract}
  We study the asymptotic behavior of the principal eigenvector and eigenvalue of the random conductance Laplacian in a large domain of $\Z^d$ ($d\geq 2$) with zero Dirichlet condition.
  We assume that the conductances $w$ are positive i.i.d.\ random variables, which fulfill certain regularity assumptions near zero.
  If $\gamma=\sup \{ q\geq 0\colon \E [w^{-q}]<\infty \}<1/4$, then we show that for almost every environment the principal Dirichlet eigenvector asymptotically concentrates in a single site and the corresponding eigenvalue scales subdiffusively.
  The threshold $\gamma_{\rm c} = 1/4$ is sharp.
  Indeed, other recent results imply that for $\gamma>1/4$ the top of the Dirichlet spectrum homogenizes.
  Our proofs are based on a spatial extreme value analysis of the local speed measure, Borel-Cantelli arguments, the Rayleigh-Ritz formula, results from percolation theory, and path arguments.
\end{abstract}

\tableofcontents

\newpage
\section{Introduction}
In dimensions greater than one, the spectrum of the i.i.d.\ random conductance Laplacian displays a sharp transition between complete localization and complete homogenization.
This is the result of the present paper in combination with recent papers from Flegel, Heida, and Slowik \cite{FHS2017} and Neukamm, Sch{\"a}ffner, and Schl{\"o}merkemper \cite{Neukamm2016}. While the other two papers
cover spectral homogenization, we investigate the localization phase. A simple moment condition distinguishes between the two phases.

More precisely, we investigate the spectrum of the Laplacian $\mathcal{L}_{\boldsymbol{w}}$ associated with the random conductance model on the euclidean lattice $\Z^d$.
The Laplacian acts on real-valued functions $f\in \ell^2(\Z^d)$ as
\begin{align}
  (\mathcal{L}_{\boldsymbol{w}} f)(x) =\sum_{y\colon |x-y|_1 =1} w_{xy}(f(y) - f(x)) \qquad (x\in\Z^d)\, .
  \label{equ:DefGenerator}
\end{align}
We assume that the conductances $w$ are positive, independent and identically distributed (i.i.d.) random variables.
We describe the almost-sure behavior of the principal eigenvector
with zero Dirichlet conditions outside a growing centered ball $B$.
It turns out that its behavior strongly depends on the lower tails of the conductances.
To be more precise, let us denote the expectation with respect to the conductances by $\E$ and define $\gamma = \sup \{ q\geq 0\colon \E [w^{-q}] <\infty \}$. Then we show that, under some further regularity assumptions,
\begin{align}
  \gamma < 1/4 \Rightarrow \begin{cases}
                                         \text{a.s.\ complete localization of principal Dirichlet eigenvector and}\\
                                         \text{a.s.\ subdiffusive scaling of principal Dirichlet eigenvalue.}
                                        \end{cases}
   \label{equ:EquivLoc}
\end{align}
On the other hand, as a special case the results in \cite{FHS2017} and \cite[Corollary 3.4, Proposition 3.18, Lemma 3.9]{Neukamm2016} imply that
\begin{align}
  \gamma > 1/4 \Rightarrow \begin{cases}
                                         \text{a.s.\ complete homogenization of first Dirichlet eigenvectors and}\\
                                         \text{a.s.\ convergence of diffusively rescaled first Dirichlet eigenvalues.}
                                        \end{cases}
  \label{equ:EquivHom}
\end{align}
We comment on this in Section \ref{subsec:literature}.
Together, \eqref{equ:EquivLoc} and \eqref{equ:EquivHom} imply that in the i.i.d.\ random conductance model there is a dichotomy between a completely homogenized and a completely localized phase.

Moreover, it is remarkable that the critical exponent $\gamma_{\rm c} = 1/4$ coincides with the critical exponent for the validity of a local central limit theorem (LCLT) of the corresponding random walk (see e.g.\ \cite[Theorem 1.9, Remark 1.10(1)]{Boukhadra2015}):
\begin{align*}
  \gamma > 1/4 \Rightarrow \text{LCLT holds} \qquad\text{and}\qquad
  \gamma < 1/4 \Rightarrow \text{LCLT does not hold.}
\end{align*}
The validity of a local CLT is a very strong kind of heat-kernel homogenization.
But in contrast to the principal Dirichlet eigenvector, the heat kernel does not display such a completely different behavior for $\gamma<1/4$. Although the heat kernel decays anomalously for $\gamma$ small enough \cite{Fontes2006, Berger2008, Boukhadra2010, Biskup2012}, a quenched functional CLT (QFCLT) still holds under minimal assumptions on the i.i.d.\ environment \cite{Andres2012}.
This is indeed \emph{not} a contradiction since the QFCLT associates with macroscopic properties of the random walk, whereas the anomalous heat-kernel bounds as well as the local CLT and the principal Dirichlet eigenvector are all sensitive to microscopic trapping structures.

Note that we do not generalize our results to higher order eigenvectors here since in Lemma \ref{lem:uniquemax} we rely on the Perron-Frobenius property of the principal Dirichlet eigenvector.
In the recent paper \cite{Flegel2018} we overcome this difficulty by using the Bauer-Fike theorem.

This paper is organized as follows:
In Section \ref{subsec:model} we define the model and our main objects.
We present our main results in Section \ref{sec:results}.
In Section \ref{subsec:literature} we compare our results with former results.
Section \ref{subsec:heatkernel} contains some comments on how a subdiffusive upper bound for the principal Dirichlet eigenvalue contradicts diffusive heat-kernel upper bounds.
We survey the proofs concerning the eigenvalues in Sections \ref{subsec:proofUpper} and \ref{subsec:proofseigval} where we rely on technical results from the subsequent sections. Section \ref{sec:BC} contains Borel-Cantelli arguments, which extend results from \cite{Cox1981}, \cite{Kesten2003} and \cite{Boukhadra2015}. In Section \ref{sec:perc} we adapt some standard results on percolation theory from \cite{Berger2008}, \cite{Mathieu2004} and \cite{Boukhadra2015} to our needs. Section \ref{sec:path} contains a path argument similar to the one in \cite{Boukhadra2015}, which was used to obtain a lower bound for the spectral gap of the random walk that is killed with a rate $\lambda$ on a certain percolation cluster.
Finally, we prove the localization of the principal Dirichlet eigenvector in Section \ref{sec:ProofThmLoc}.

\subsection{Model and main objects}
\label{subsec:model}
We consider the lattice with vertex set $\mathbb{Z}^d$ ($d\geq 2$) and edge set $\edges_d = \{ \{x,y\}: x,y \in \mathbb{Z}^d, |x-y|_1 = 1\}$.
If two sites $x,y\in \mathbb{Z}^d$ are neighbors according to $\edges_d$, we also write $x\sim y$.
To each edge $e\in \edges_d$ we assign a positive random variable $w_e$.
In analogy to a $d$-dimensional resistor network, we call the random weights $w_e$ \emph{conductances}.
We take $(\Omega,\mathcal{F})=\bigl( (0,\infty)^{\edges_d}, \mathcal{B} \left( (0,\infty) \right)^{\otimes\edges_d} \bigr)$ as the underlying measurable space and assume that an environment $\boldsymbol{w} = (w_e)_{ e\in\edges_d}\in \Omega$ is a family of i.i.d.\ positive random variables with law $\Pr$. We denote the expectation with respect to $\Pr$ by $\E$.

If $e$ is the edge between the sites $x,y\in \mathbb{Z}^d$, we will also write $w_{xy}$ or $w_{x,y}$ instead of $w_e$.
Note that by definition of the edge set $\edges_d$, the edges are undirected, whence $w_{xy} = w_{yx}$.
If we want to refer to an arbitrary copy of the conductances in general, we simply write $w$, i.e., for a set $A\in\mathcal{B} \left( (0,\infty) \right)$, the expression $\Pr [w\in A]$ equals $\Pr [w_{e}\in A]$ for an arbitrary edge $e$.

We call
\begin{align}
  F\colon [0,\infty) \to [0,1]\colon u\mapsto \Pr [w\leq u]
\end{align}
the distribution function of the conductances.

Given a realization $(w_{xy})_{\{x,y\}\in\edges_d}$ of the environment,
we consider the Markov chain on $\Z^d$ with transitions rates given by the conductances $w_{xy}$.
Its generator $\mathcal{L}_{\boldsymbol{w}}$ is defined as in \eqref{equ:DefGenerator}.
This Markov chain is known as the variable-speed random walk among random conductances. 
During the last decades both physicists and mathematicians have analyzed the random conductance model extensively and many questions regarding central limit theorems and heat-kernel behavior have been answered (for reviews see \cite{Bouchaud1990} and \cite{Biskup2011review}, respectively).
The model became popular for the description of materials where the transition rates between different states are independent of the states' energy levels.
This is the case e.g.\ for the spectral transport of optical excitations among impurity ions \cite{Lyo1979}, or charge transport in the one-dimensional ionic conductor hollandite \cite{Bernasconi1979}.

Our goal is to study the behavior of the principal eigenvalue $\lambda_1^{(n)}$ and eigenvector $\psi_1^{(n)}$ of the
sign-inverted generator $-\mathcal{L}_{\boldsymbol{w}}$ in the ball
\begin{align}
  B_n := \left\{ x\in\mathbb{Z}^d\colon |x|_\infty \leq n\right\} = [-n,n]^d \cap \Z^d
\end{align}
with zero Dirichlet conditions at the boundary.
For a real-valued function $f\in \ell^2 (\Z^d)$ let us define the Dirichlet energy $\mathcal{E}^{\boldsymbol{w}} (f)$ with respect to the operator $-\mathcal{L}_{\boldsymbol{w}}$ by
\begin{align}
  \label{equ:DefDirichletEnergy}
  \mathcal{E}^{\boldsymbol{w}} (f) = \langle f, -\mathcal{L}_{\boldsymbol{w}} f \rangle\, ,
\end{align}
where $\langle \cdot, \cdot \rangle$ denotes the standard scalar product,
i.e., for functions $f_1, f_2\colon \Z^d \to \R$ we let $\langle f_1, f_2 \rangle = \sum_{x\in\Z^d} f_1 (x) f_2 (x)$.
Then, according to the Courant-Fischer theorem, the principal Dirichlet eigenvalue is given by the variational formula
\begin{align}
	\lambda_1^{(n)} = \inf \left\{ \mathcal{E}^{\boldsymbol{w}} (f)\colon f\in \ell^2 (\Z^d),\, \supp\, f \subseteq B_n,\, \| f \|_{2} = 1 \right\}\, ,
		\label{equ:Variational}
\end{align}
where we let $\supp\, f$ denote the support of the function $f$.
The function $f$ that minimizes the RHS of \eqref{equ:Variational} is the principal Dirichlet eigenfunction $\psi_1^{(n)}$, whence $\lambda_1^{(n)} = \mathcal{E}^{\boldsymbol{w}} \bigl(\psi_1^{(n)}\bigr)$.
\begin{remark}[Perron-Frobenius]
 \label{rem:PF}
  For a given box $B_n$ the operator $\mathcal{L}_{\boldsymbol{w}}$ together with the zero Dirichlet boundary conditions can be written as a $|B_n|\times |B_n|$-matrix with non-negative entries everywhere except on the diagonal.
  Since the matrix is finite-dimensional, we can add a multiple of the identity to obtain a non-negative primitive matrix without changing the matrix' spectrum.
  By the Perron-Frobenius theorem (see e.g.\ \cite[Ch.\ 1]{SenetaNN}) it follows that its principal eigenvalue is simple and we can assume without loss of generality that its principal eigenvector is positive. 
\end{remark}

In this paper we are especially interested in the behavior of the principal Dirichlet eigenvalue and eigenfunction for dimensions $d\geq 2$ and for conductances with a very heavy tail near zero. More precisely, we consider those cases where the conductances are distributed such that a local central limit theorem is not valid (cf.\ \cite[Remark 1.10]{Boukhadra2015}).
Under different circumstances, the principal Dirichlet eigenvalue and eigenfunctions were studied before:
Boivin and Depauw \cite{Boivin2003} proved that the top of the spectrum homogenizes for uniformly elliptic conductances.
Recent results from Neukamm, Sch{\"a}ffner, and Schl{\"o}merkemper \cite{Neukamm2016} and Flegel, Heida, and Slowik \cite{FHS2017} imply that the uniform ellipticity condition can be weakened to suitable moment conditions.
The one-dimensional case was thoroughly covered by Faggionato \cite{Faggionato2012}.
In Section \ref{subsec:literature} we comment on this background and how our results relate to this previous work.

\subsection{Main results}
\label{sec:results}
First we give asymptotic lower and upper bounds for the principal Dirichlet eigenvalue $\lambda_1^{(n)}$.
How can we determine whether a function $g\colon (0,\infty)\to (0,\infty)$ that decreases monotonically to zero, is such an asymptotic lower or upper bound?
To settle this, we will see that it is crucial to determine whether the box $B_n$ contains a site such that all the $2d$ incident conductances are less than or equal to $g(n)$. We call such a site a $g(n)$-trap.
A function $\Lambda_g\colon (0,\infty)\to (0,\infty)$ that carries the information about how many $g(n)$-traps we can expect in the box $B_n$, is defined by
\begin{align}
   \Lambda_g (n) = n^d \Pr \left[ w\leq g(n) \right]^{2d}\, .
   \label{equ:DefLambda}
\end{align}
Note that the factor $n^d$ scales like the number of sites in the box $B_n$ and the factor $\Pr \left[ w\leq g(n) \right]^{2d}$ relates to the probability that for a given site all the $2d$ incident edges carry a conductance less than or equal to $g(n)$.
We will see in Lemma \ref{lem:BC:ii} that if $\Lambda_g$ diverges fast enough, then \Pas for $n$ large enough the box $B_n$ contains at least one $g(n)$-trap.
On the other hand, if $\Lambda_g$ decreases fast enough to zero, then \Pas for $n$ large enough $B_n$ does not contain a $g(n)$-trap, see Lemma \ref{lem:BC:i}.

In our results we often require recurring conditions on the function $g\colon (0,\infty) \to (0,\infty)$.
Let us recall that a function $g$ varies regularly at infinity (zero) with index $\rho\in\R$ if $g(u)=u^\rho L(u)$ where $L$ is a slowly varying function at infinity (zero), i.e., for all $C>0$ we have
\begin{align*}
  \frac{L(Cu)}{L(u)}\to 1\qquad\text{as $u\to\infty$ (as $u\to 0$).}
\end{align*}
see e.g.\ \cite[Chapter 1]{BinghamRV}.

\begin{ass}\label{ass:g}
  Let $g\colon (0,\infty) \to (0,\infty)$.
  \begin{conditions}
   \item\label{ass:RVless} The function $g$ varies regularly at infinity with index less than $-2$.
   \item\label{ass:mon} The function $u\mapsto u^2g(u)$ is monotone and has a finite limit as $u$ tends to infinity.
   \item[\mylabel{ass:mondec}{(b')}] The function $u\mapsto u^2g(u)$ converges monotonically to zero as $u$ tends to infinity.
  \end{conditions}
\end{ass}

Our first theorem gives a sufficient and a necessary condition for when the function $g$ is an asymptotic upper bound for the principal Dirichlet eigenvalue $\lambda_1^{(n)}$.
Note that, given one of the Assumptions \ref{ass:g} \ref{ass:RVless} or \ref{ass:mondec} is true, then the sufficient and necessary conditions coincide up to the case where $\Lambda_g$ scales exactly like $\log\log n$.
We summarize all the conditions of the following two theorems in a graphical overview (see Figure \ref{fig:Tails}).

\begin{thm}[Upper bound]\label{thm:EigvalsUpperBound} Let $g:(0,\infty) \rightarrow (0,\infty)$ be a function that converges monotonically to zero and let $\Lambda_g$ be as in \eqref{equ:DefLambda}. Then the following statements are true:
 \begin{enumerate}[label={(\roman*)},ref={\thethm~(\roman*)}]
   \item \label{thm:EigvalsUpperBoundSufficient} If there exists $\eps>0$ such that for all $n$ large enough
   \begin{align}
   	  \frac{\Lambda_g (n)}{\log\log n} \geq 2+\eps\, ,
   	  \label{equ:UpperCond}
   \end{align}
   then \Pas for $n$ large enough $\lambda_1^{(n)} \leq 2d g\left(n \right)$.
  \item \label{thm:EigvalsUpperBoundNecessary}
   On the other hand, if
   \begin{align}
   	  \lim_{u\rightarrow\infty} \frac{\Lambda_g(u)}{\log\log u} = 0\, ,
   	  \label{equ:NotUpperCond}
   \end{align}
   and one of the Assumptions \ref{ass:g} \ref{ass:RVless} or \ref{ass:mondec} is true,
   then \Pas $\limsup_{n\rightarrow\infty} \frac{\lambda_1^{(n)}}{g(n)} = \infty$.
\end{enumerate}
\end{thm}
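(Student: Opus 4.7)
The plan is to combine the Rayleigh--Ritz variational characterisation \eqref{equ:Variational} with a single-site test function placed at a ``trap'', whose existence is provided by a Borel--Cantelli argument. Under the hypothesis \eqref{equ:UpperCond}, Lemma \ref{lem:BC:ii} yields that \Pas there is $N(\vec w) < \infty$ such that, for every $n \geq N$, the box $B_n$ contains at least one $g(n)$-trap: some $x^{\star} \in B_n$ with $w_{x^{\star} y} \leq g(n)$ for all $y \sim x^{\star}$. I would then test \eqref{equ:Variational} with $f = \delta_{x^\star}$. Since $\supp f = \{x^\star\}\subseteq B_n$ and $\|f\|_2 = 1$, a direct computation using \eqref{equ:DefDirichletEnergy} gives
\begin{align*}
\mathcal{E}^{\vec w}(\delta_{x^{\star}}) = \langle \delta_{x^\star},-\mathcal{L}_{\vec w} \delta_{x^\star}\rangle = \sum_{y \sim x^{\star}} w_{x^{\star} y} \leq 2d\,g(n),
\end{align*}
so that $\lambda_1^{(n)} \leq 2d g(n)$ for all $n \geq N$.

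\textbf{Part (ii).} I would establish the stronger statement $\liminf_{n\to\infty} \lambda_1^{(n)}/g(n) = \infty$, which clearly implies the claimed $\limsup = \infty$. Fix an arbitrary $K > 0$ and set $g_K := K g$; the aim is to show that eventually $\lambda_1^{(n)} \geq c\, K g(n)$ for some $c > 0$ independent of $K$, and then to send $K \to \infty$. The first step is to verify that $g_K$ still satisfies $\Lambda_{g_K}(u)/\log\log u \to 0$. This is where one of Assumption \ref{ass:g} \ref{ass:RVless} or \ref{ass:mondec} enters: regular variation of $g$ at infinity (resp.\ monotonicity of $u\mapsto u^2 g(u)$) controls the ratio $\Pr[w\leq Kg(n)]/\Pr[w\leq g(n)]$ tightly enough to transport \eqref{equ:NotUpperCond} from $g$ to $g_K$.

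The second step is to invoke Lemma \ref{lem:BC:i} for $g_K$ and conclude that \Pas for $n$ large the box $B_n$ contains no $g_K(n)$-trap; equivalently, every site $x \in B_n$ has at least one incident edge of conductance exceeding $Kg(n)$. The third and decisive step is to turn this local abundance of ``strong'' edges into a lower bound on $\lambda_1^{(n)}$ itself. Here I would combine the path argument of Section \ref{sec:path} with the percolation estimates of Section \ref{sec:perc} on the subgraph of strong edges to derive a Poincaré-type inequality yielding $\lambda_1^{(n)} \geq c\, K g(n)$. Letting $K \to \infty$ then completes the proof.

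\textbf{Main obstacle.} The hardest part is clearly the third step of (ii): converting the purely local statement ``no $g_K(n)$-trap'' into a global spectral lower bound of order $Kg(n)$. Locally, each site merely touches one strong edge, but a Dirichlet eigenvalue is a global object and one needs to bootstrap this abundance into a resistance estimate across all of $B_n$. Concretely, one must build a family of explicit paths through the random strong-edge network while controlling their lengths and bottleneck conductances, which is exactly why the percolation-theoretic and path-argument machinery of Sections \ref{sec:BC}--\ref{sec:path} is genuinely required. The first two steps, by contrast, are direct tail-probability computations.
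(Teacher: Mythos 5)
Part (i) of your proposal is correct and is exactly the paper's argument: Lemma \ref{lem:BC:ii} produces a $g(n)$-trap $z_n\in B_n$ for all large $n$, and testing \eqref{equ:Variational} with $\delta_{z_n}$ gives $\lambda_1^{(n)}\leq \pi_{z_n}\leq 2dg(n)$.

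Part (ii), however, contains a genuine gap: the ``stronger statement'' $\liminf_{n\to\infty}\lambda_1^{(n)}/g(n)=\infty$ that you set out to prove is in general \emph{false} under the hypotheses of (ii), and the step where you invoke Lemma \ref{lem:BC:i} does not go through. Lemma \ref{lem:BC:i} yields the a.s.\ eventual absence of $g_K(n)$-traps only under the integral condition $\int_0^\infty u^{-1}\Lambda_{g_K}(u)\,\d u<\infty$, which is strictly stronger than \eqref{equ:NotUpperCond}. For example, if $\Lambda_g(u)\sim 1/\log u$ then \eqref{equ:NotUpperCond} holds, but $\int_0^\infty u^{-1}\Lambda_g(u)\,\d u=\infty$ (and the same is true for $\Lambda_{g_K}$, since under \ref{ass:RVless} or \ref{ass:mondec} one has $Kg(n)\leq g(K^{-1/2}n)$ eventually); in that case the second half of Lemma \ref{lem:BC:i} shows that $B_n$ \emph{does} contain a $g(n)$-trap infinitely often, whence $\liminf_n\lambda_1^{(n)}/g(n)\leq 2d$ by the test-function bound of part (i). So both $\liminf=0$ and $\limsup=\infty$ can hold simultaneously (cf.\ Corollary \ref{cor:liminfsup} and the second statement of Theorem \ref{thm:EigvalsLowerBound}), and no argument can deliver $\liminf=\infty$ here.

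The correct route, which is the one the paper takes, replaces your second step by Lemma \ref{lem:BC:iii}: under \eqref{equ:NotUpperCond} one only gets, \Pas, a random \emph{subsequence} $n_k'$ along which every site of $B_{n_k'}$ has an incident edge with conductance exceeding $\overline{c}\,g(n_k')$. One additionally needs the sparseness input of Corollary \ref{cor:BC:i4} (at most $2d$ bad edges in every subbox $B_{3d}(z)$, giving Assumption \ref{item:SpectralGapD:Box} of Proposition \ref{prop:SpectralGapD}) together with the percolation Lemmas \ref{lem:PropIICBox}, \ref{lem:InjectiveMap} and \ref{lem:SpectralGapPerc}; then Proposition \ref{prop:Path} applied with $\overline{c}g$ gives $\mathcal{E}^{\vec w}(f)\geq C\overline{c}\,g(n_k')\|f\|_2^2$ along that subsequence, with $C$ independent of $\overline{c}$. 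This yields $\limsup_n\lambda_1^{(n)}/g(n)\geq C\overline{c}$ for every $\overline{c}>1$, which is exactly the claimed $\limsup=\infty$ --- but no more. Your third step (the path/percolation machinery) is the right tool; the error is purely in which Borel--Cantelli statement is available and, consequently, in which limit (inferior versus superior) can be controlled.
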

We prove part (i) of this theorem in Section \ref{subsec:proofUpper} and part (ii) in Section \ref{subsec:proofseigval}.
Note that in (ii) the Assumptions \ref{ass:g} \ref{ass:RVless} and \ref{ass:mondec} correspond to the fact that we can only deduce that the limit superior diverges if we assume that $g$ is in $o(n^{-2})$. This is because in the diffusive regime $\lambda_1^{(n)}$ scales like $n^{-2}$.

In the case where the distribution function $F(a)$ varies regularly at zero with index $\gamma>0$, Theorem \ref{thm:EigvalsUpperBoundSufficient} implies the following corollary.
Since its proof is immediate, we omit it.

\begin{figure}
  \begin{center}
  \includegraphics[width=\linewidth]{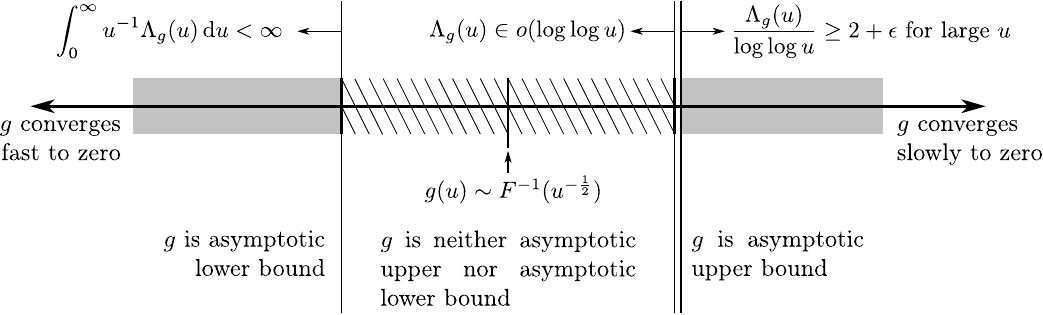}
  \caption{Visualization of our results from Theorems \ref{thm:EigvalsUpperBound} and \ref{thm:EigvalsLowerBound} for a fixed distribution function $F$ that is continuous and strictly monotone near zero.
  The figure shows the space of functions $g:(0,\infty)\to (0,\infty)$ that decrease to zero. The space is depicted such that if $f\in o(g)$, then $f$ appears left of $g$. For simplicity we assume that $g$ fulfills one of the Assumptions \ref{ass:g} \ref{ass:RVless} or \ref{ass:mondec}.
If $F^{-1} (u^{-1/2})\in o(g(u))$, then $\Lambda_g (u)$ diverges. If $g$ even decays slowly enough such that condition \eqref{equ:UpperCond} is fulfilled, then \Pas for $n$ large enough $\lambda_1^{(n)}\leq 2dg(n)$. On the other hand, if $g(u)\in o(F^{-1} (u^{-1/2}))$, then $\Lambda_g (u)$ converges to zero. If $g$ even decays fast enough such that \eqref{equ:HomCond} is fulfilled, then there exists $c>0$ such that \Pas for $n$ large enough $\lambda_1^{(n)}\geq cg(n)$.
The figure also shows that around $g(u) \sim F^{-1} (u^{-1/2})$ there is an interval where $g$ is definitely neither an a.s.\ asymptotic upper nor an a.s.\ asymptotic lower bound, see e.g.\ Corollary \ref{cor:liminfsup}.}
  \label{fig:Tails}
  \end{center}
\end{figure}

\begin{cor}
  \label{cor:UpperBoundPolylaw}
  Let $\delta>0$.
  If $F$ varies regularly at zero with index $\gamma> 0$, then \Pas for $n$ large enough the function $g(n)= n^{-\frac{1}{2\gamma}+\delta}$ is an asymptotic upper bound for $\lambda_1^{(n)}$.
  If even $F(a)=a^\gamma$ for $a\in[0,1]$, then the upper bound can be improved to $g(n)=n^{-\frac{1}{2\gamma}}\left((2+\eps)\log\log n\right)^{\frac{1}{2d\gamma}}$.
\end{cor}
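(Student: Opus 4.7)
The plan is to directly apply Theorem \ref{thm:EigvalsUpperBoundSufficient} to the two candidate functions $g$, which reduces the statement to a routine computation of $\Lambda_g(n) = n^d F(g(n))^{2d}$ and a check that it dominates $\log\log n$ by at least a factor $2+\eps$.

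For the first part, regular variation of $F$ at zero with index $\gamma>0$ means $F(a) = a^\gamma L(a)$ with $L$ slowly varying at zero. Substituting $g(n) = n^{-\frac{1}{2\gamma}+\delta}$ gives
\begin{align*}
  \Lambda_g(n) \;=\; n^d \bigl( g(n)^\gamma L(g(n)) \bigr)^{2d} \;=\; n^{2d\gamma\delta}\, L\bigl(g(n)\bigr)^{2d}\, .
\end{align*}
Since $L$ is slowly varying and $g(n)\to 0$, the slowly varying factor is negligible relative to the polynomial prefactor $n^{2d\gamma\delta}$ (with $\delta>0$), so $\Lambda_g(n)/\log\log n \to \infty$. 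In particular \eqref{equ:UpperCond} holds for every $\eps>0$ and all $n$ large, provided we also ensure that $g$ is monotone and tends to zero; this is automatic for $0<\delta<\tfrac{1}{2\gamma}$ (the only range in which the claim is informative).

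For the sharper second part, when $F(a)=a^\gamma$ on $[0,1]$ we can compute exactly. With $g(n)=n^{-\frac{1}{2\gamma}}\bigl((2+\eps)\log\log n\bigr)^{\frac{1}{2d\gamma}}$ (which is eventually $\leq 1$ and monotone decreasing to zero), we get
\begin{align*}
  F(g(n))^{2d} \;=\; g(n)^{2d\gamma} \;=\; n^{-d}\,(2+\eps)\log\log n\, ,
\end{align*}
so that $\Lambda_g(n) = (2+\eps)\log\log n$, matching \eqref{equ:UpperCond} on the nose. Theorem \ref{thm:EigvalsUpperBoundSufficient} then yields $\lambda_1^{(n)} \leq 2d\, g(n)$ \Pas for $n$ large.

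There is no real obstacle: both assertions are one-line substitutions into \eqref{equ:DefLambda} followed by invoking Theorem \ref{thm:EigvalsUpperBoundSufficient}. The only mild care needed is to verify that the chosen $g$ is monotone and tends to zero (trivial in both cases for small enough $\delta$, respectively for $n$ large) and, in the first part, to absorb the slowly varying factor $L(g(n))^{2d}$ into the polynomial growth of $\Lambda_g$, which is immediate from the definition of slow variation.
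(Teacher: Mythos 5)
Your proposal is correct and is exactly the argument the paper has in mind (the paper omits the proof as ``immediate''): substitute each candidate $g$ into \eqref{equ:DefLambda}, check that \eqref{equ:UpperCond} holds --- using the standard bound that a slowly varying factor is dominated by the polynomial $n^{2d\gamma\delta}$ in the first case, and an exact computation giving $\Lambda_g(n)=(2+\eps)\log\log n$ in the second --- and invoke Theorem \ref{thm:EigvalsUpperBoundSufficient}. Your side remarks on monotonicity of $g$ and the restriction to $\delta<\tfrac{1}{2\gamma}$ are appropriate and complete the verification of the theorem's hypotheses.
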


Note that if $F$ varies regularly at zero with index $\gamma> 0$, then $\gamma = \sup \{ q\geq 0\colon \E [ w^{-q} ] <\infty \}$, as defined in the introduction.
Further note that if $\gamma\in [0,1/4)$, then there exists $\eta>0$ such that the expectation $\mathbb{E} \left[  w^{-1/4+\eta}\right]$ diverges, cf. the conditions of \cite[Theorem 1.13]{Andres2016}.

The second theorem gives conditions for when the function $g$ is an asymptotic lower bound of the principal Dirichlet eigenvalue $\lambda_1^{(n)}$.
Note that this theorem implies that, given one of the Assumptions \ref{ass:g} \ref{ass:RVless} or \ref{ass:mon} is true and $\Lambda_g$ is bounded, then the condition in \eqref{equ:HomCond} is sharp.
We further comment on these conditions in Section \ref{subsec:proofseigval}. As with the conditions of Theorem \ref{thm:EigvalsUpperBound}, we summarize them in the graphical overview Figure \ref{fig:Tails}.

\begin{thm}[Lower Bound]\label{thm:EigvalsLowerBound}
Let $g:(0,\infty) \rightarrow (0,\infty)$ be a decreasing function that fulfills one of the Assumptions \ref{ass:g} \ref{ass:RVless} or \ref{ass:mon}.
Let $\Lambda_g$ be as in \eqref{equ:DefLambda}.
Then the following statements are true: If
\begin{align}
  			\int_0^\infty u^{-1} \Lambda_g (u) \,\d u<\infty
  			\label{equ:HomCond}
\end{align}
and $\Lambda_g$ is bounded from above, then there exists a constant $c>0$ such that $\Pr$-a.s.\ for $n$ large enough $\lambda_1^{(n)} \geq c g(n)$.
If, on the other hand, Condition \eqref{equ:HomCond} does not hold, then \Pas $\liminf_{n\rightarrow\infty} \frac{\lambda_1^{(n)}}{g(n)} = 0$.
\end{thm}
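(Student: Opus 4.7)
The plan is to handle the sufficient and necessary directions separately, combining the Borel--Cantelli lemmas of Section \ref{sec:BC}, the percolation estimates of Section \ref{sec:perc}, and the path argument of Section \ref{sec:path} for the first, and a direct test-function calculation for the second.

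For the sufficient direction, I would proceed as follows. First, Lemma \ref{lem:BC:i} applied via one of Assumptions \ref{ass:g}\ref{ass:RVless} or \ref{ass:g}\ref{ass:mon} (which allow one to pass from the integral criterion $\int u^{-1}\Lambda_g(u)\,\d u<\infty$ to a summable series criterion along a geometric subsequence) yields that \Pas for $n$ large $B_n$ contains no $g(n)$-trap; boundedness of $\Lambda_g$ additionally forces $\Pr[w\leq g(n)]=O(n^{-1/2})$, so edges with $w_e\leq g(n)$ constitute a deeply subcritical bond percolation, and by Section \ref{sec:perc} every ``bad'' cluster meeting $B_n$ has diameter $O(\log n)$ \Pas. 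Second, fix $a_0>0$ with $\Pr[w\geq a_0]>p_c(\Z^d)$; the supercritical infinite cluster $\Cinf$ of edges $\{e\colon w_e\geq a_0\}$ then has positive density, and \Pas every $x\in B_n$ lies within $O(\log n)$ steps of $\Cinf$ along edges of conductance at least $g(n)$ (using the no-trap condition). Third, the path/flow construction of Section \ref{sec:path} applied to any $f\in\ell^2(\Z^d)$ with $\supp f\subseteq B_n$ builds, for each $x\in B_n$, a path $\gamma_x$ to $\partial B_n$ consisting of a short $g(n)$-bounded initial piece to $\Cinf$ followed by a long $a_0$-bounded piece inside $\Cinf$; Cauchy--Schwarz along $\gamma_x$ combined with a careful flow-counting estimate for edge multiplicities yields a Poincaré-type bound $\|f\|_2^2\leq Cg(n)^{-1}\mathcal{E}^{\vec w}(f)$, hence $\lambda_1^{(n)}\geq c\,g(n)$.

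For the necessary direction, fix $\delta>0$. Assumptions \ref{ass:g}\ref{ass:RVless} or \ref{ass:g}\ref{ass:mon} permit one to compare $\Lambda_{\delta g}$ to $\Lambda_g$ (under regular variation, $\Lambda_{\delta g}(u)\geq c_\delta \Lambda_g(u)$ for $u$ large; under monotonicity of $u^2 g(u)$ a similar comparison is elementary), so the divergence of $\int u^{-1}\Lambda_g(u)\,\d u$ forces the divergence of $\int u^{-1}\Lambda_{\delta g}(u)\,\d u$ as well. The second Borel--Cantelli lemma (Lemma \ref{lem:BC:ii}) then gives that \Pas for infinitely many $n$ the box $B_n$ contains a $\delta g(n)$-trap, i.e.\ a vertex $x\in B_n$ with $\max_{y\sim x}w_{xy}\leq \delta g(n)$. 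Testing the variational formula \eqref{equ:Variational} with $f=\mathds{1}_x$ gives
\begin{align*}
  \lambda_1^{(n)}\;\leq\;\mathcal{E}^{\vec w}(\mathds{1}_x)\;=\;\sum_{y\sim x} w_{xy}\;\leq\;2d\,\delta\,g(n),
\end{align*}
so $\liminf_{n\to\infty}\lambda_1^{(n)}/g(n)\leq 2d\,\delta$ \Pas; letting $\delta\downarrow 0$ along a countable sequence concludes.

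The principal obstacle is the flow-counting step in the sufficient direction: one has to verify that the $g(n)^{-1}$-weighted detour resistance is not inflated by the logarithmic diameters of the bad clusters nor by the multiplicity with which detour edges are shared between paths of nearby sites. Controlling this simultaneously for all $x\in B_n$ requires the subcritical cluster-size estimates of Section \ref{sec:perc} together with a careful design of paths that route each bad cluster's traffic through a small number of ``exit'' edges joining it to $\Cinf$.
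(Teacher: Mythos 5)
Your overall architecture is right for both directions (Borel--Cantelli to control bad edges, a supercritical cluster plus a path/flow argument for the sufficient part; second Borel--Cantelli plus the test function $\delta_x$ for the necessary part), and your necessary direction is essentially the paper's argument, except that you cite the wrong lemma: Lemma \ref{lem:BC:ii} requires $\Lambda_g(n)/\log\log n\geq k+\eps$, which is \emph{not} implied by divergence of $\int u^{-1}\Lambda_g(u)\,\d u$ (take $\Lambda_g\equiv 1$). The correct tool is the divergence half \eqref{equ:BC:ib} of Lemma \ref{lem:BC:i}; with that substitution your $\delta$-scaling argument is a legitimate variant of the paper's $c$-rescaling of the argument of $g$.

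The genuine gap is in the sufficient direction. Your construction routes each $x\in B_n$ along a short $g(n)$-conducting piece into the supercritical cluster and then along a long piece inside that cluster, and you assert the resulting bound $\|f\|_2^2\leq Cg(n)^{-1}\mathcal{E}^{\vec w}(f)$. That bound cannot follow from this construction: the long piece uses edges of conductance only $\geq a_0$, has length of order $n$, and its edges are shared by order $n^{d-1}$ paths, so the flow count necessarily produces an additional term of order $n^2$ — indeed $\lambda_1^{(n)}\lesssim n^{-2}$ always, so no Poincar\'e constant better than $n^{-2}$ is possible. The correct conclusion is $\|f\|_2^2\leq C\bigl(g(n)^{-1}+n^2\bigr)\mathcal{E}^{\vec w}(f)$ (this is the structure of Propositions \ref{prop:SpectralGapD} and \ref{prop:Path}, where the $n^2$ enters through the isoperimetric bound on $\C^\xi$, Lemma \ref{lem:SpectralGapPerc}), and one then needs $n^2 g(n)$ bounded to absorb the second term. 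That is precisely the role of Assumptions \ref{ass:g} \ref{ass:RVless}/\ref{ass:mon}, which you instead (incorrectly) assign to the Borel--Cantelli step — Lemma \ref{lem:BC:i} uses the integral criterion directly and needs no such hypothesis. As written, your argument would ``prove'' $\lambda_1^{(n)}\geq c g(n)$ for, say, conductances bounded below by $1$ and $g(n)=n^{-1}$, which is false; the theorem excludes this only through the assumptions on $g$ that your proof never actually invokes where they are needed. A secondary, repairable difference: the paper controls bad edges not by subcritical cluster diameters but by the local count of Corollary \ref{cor:BC:i3} (at most $3d-1$ bad edges in every $B_{3d}(z)$), which is what makes the detours short and the map from holes to the cluster injective with bounded multiplicity.
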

We prove the first part of this theorem in Section \ref{subsec:proofseigval}.
The second part, i.e., where Condition \eqref{equ:HomCond} does not hold, is covered in Section \ref{subsec:proofUpper}.

Similarly as for Theorem \ref{thm:EigvalsUpperBound}, we obtain the following corollary.
As before, its proof is immediate and therefore we omit it.
\begin{cor}
  \label{cor:LowerBoundPolylaw}
  Let $\delta>0$. If $F$ varies regularly at zero with index $\gamma\in (0,1/4]$, then \Pas for $n$ large enough the function $g(n)= n^{-\frac{1}{2\gamma}-\delta}$ is an asymptotic lower bound for $\lambda_1^{(n)}$.
  If even $F(a)=a^\gamma$ for $a\in[0,1]$, then the lower bound can be further improved.
  For example $g(n)=n^{-\frac{1}{2\gamma}}\left(\log n\right)^{-\frac{1}{2d\gamma}-\delta}$ is an asymptotic lower bound in this case.
  
  Furthermore, if $F$ varies regularly at zero with index $\gamma>1/4$, then there exists $c>0$ such that $cn^{-2}$ is an asymptotic lower bound for $\lambda_1^{(n)}$. Then $\mathcal{E}^{\boldsymbol{w}} (f) \geq cn^2 \| f \|_2$ for all $f\in\ell^2 (B_n)$, which is a Poincar{\'e} inequality for functions with bounded support.
\end{cor}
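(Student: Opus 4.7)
The plan is to invoke Theorem \ref{thm:EigvalsLowerBound} for each of the three choices of $g$ displayed in the corollary, verifying in each case the regularity hypothesis on $g$ and the two conditions on $\Lambda_g$ (boundedness and convergence of the integral \eqref{equ:HomCond}). Writing the regularly varying $F$ as $F(u) = u^\gamma L(u)$ with $L$ slowly varying at zero, one obtains in every case
\begin{align*}
 \Lambda_g(n) \;=\; n^d F(g(n))^{2d} \;=\; n^d g(n)^{2d\gamma} L(g(n))^{2d},
\end{align*}
so the whole verification reduces to a single routine substitution per case.

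For $\gamma\in (0,1/4]$ with $g(n)=n^{-1/(2\gamma)-\delta}$, the regular-variation index of $g$ is $-1/(2\gamma)-\delta<-2$, so Assumption \ref{ass:g} \ref{ass:RVless} holds. The substitution gives $\Lambda_g(n) = n^{-2d\gamma\delta}\,L(g(n))^{2d}$, which decays polynomially modulo a slowly varying factor; hence $\Lambda_g$ is bounded at infinity and $\int^\infty u^{-1}\Lambda_g(u)\,\d u$ converges. In the sharper special case $F(a)=a^\gamma$ with $g(n)=n^{-1/(2\gamma)}(\log n)^{-1/(2d\gamma)-\delta}$, the same substitution collapses to $\Lambda_g(n) = (\log n)^{-1-2d\gamma\delta}$, whose integrability at infinity follows from the standard bound $\int^\infty u^{-1}(\log u)^{-1-\eta}\,\d u<\infty$; at the borderline $\gamma=1/4$ the regular-variation index of $g$ equals $-2$, so one invokes Assumption \ref{ass:g} \ref{ass:mon} instead of \ref{ass:RVless}, using that $u^2 g(u) = (\log u)^{-2-\delta}$ is monotone with finite limit. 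In both sub-cases boundedness and integrability of $u^{-1}\Lambda_g(u)$ near the origin follow from the trivial a priori bound $\Lambda_g(u)\le u^d$ (using $d\ge 2$), and Theorem \ref{thm:EigvalsLowerBound} then produces the required $cg(n)$ lower bound.

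For the case $\gamma>1/4$ with $g(n)=cn^{-2}$, the function $g$ varies regularly at infinity with index exactly $-2$, so Assumption \ref{ass:g} \ref{ass:RVless} fails, but $u^2 g(u)\equiv c$ is monotone with finite limit and hence Assumption \ref{ass:g} \ref{ass:mon} applies. The same computation yields $\Lambda_g(n) = c^{2d\gamma}\,n^{d(1-4\gamma)}\,L(cn^{-2})^{2d}$, which decays polynomially since $1-4\gamma<0$; both conditions on $\Lambda_g$ are therefore met. Theorem \ref{thm:EigvalsLowerBound} then yields $\lambda_1^{(n)}\ge cn^{-2}$ \Pas, and the announced Poincaré inequality follows by inserting this lower bound into the variational formula \eqref{equ:Variational}: for any $f\in\ell^2(\Z^d)$ with $\supp f\subseteq B_n$ we get $\mathcal{E}^{\boldsymbol{w}}(f)\ge \lambda_1^{(n)}\|f\|_2^2\ge cn^{-2}\|f\|_2^2$.

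No step presents a genuine obstacle; the proof is essentially a regular-variation calculation fed into Theorem \ref{thm:EigvalsLowerBound}. The only mild subtlety worth flagging is the switch from Assumption \ref{ass:g} \ref{ass:RVless} to \ref{ass:mon} at the borderline exponents, namely at $\gamma=1/4$ in the improved logarithmic bound and throughout the diffusive regime $g(n)=cn^{-2}$, where the regular-variation index of $g$ lands exactly at the critical value $-2$.
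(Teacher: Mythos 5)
Your proof is correct and is exactly the routine verification of the hypotheses of Theorem \ref{thm:EigvalsLowerBound} that the paper has in mind when it omits the proof as immediate (including the correct handling of the borderline index $-2$ via Assumption \ref{ass:g} \ref{ass:mon}, and the correct reading $\mathcal{E}^{\boldsymbol{w}}(f)\ge cn^{-2}\|f\|_2^2$ of the Poincar\'e inequality). One trivial slip: at $\gamma=1/4$ one has $u^2g(u)=(\log u)^{-2/d-\delta}$ rather than $(\log u)^{-2-\delta}$, which changes nothing in the argument.
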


  Note that for i.i.d.\ conductances with finite expectation of $w^{-1/4}$ the Poincar{\'e} inequality for functions with bounded support is also a consequence of \cite[Proposition 2.4]{Andres2016} (with $q=d/2$, $\eta$ a step function and $\nu_\omega$ replaced a $\tilde{\nu}_\omega$ which for each neighbor sums over the optimal detour from the $2d$ independent paths in Figure 2 of \cite{Andres2016}).

When we assume that $F$ is bijective near zero and set $g(u) = F^{-1} (u^{-1/2})$, then Theorems \ref{thm:EigvalsUpperBoundNecessary} and \ref{thm:EigvalsLowerBound} directly imply the following corollary.
Its proof is immediate once we have observed that $\Lambda_g (u)$ is constant in $u$.

\begin{cor}\label{cor:liminfsup}
	Assume that there exists $v>0$ such that $F\colon [0,v) \to F\left( [0,v) \right)$ is bijective and that the function $u\mapsto u^2 F^{-1} \bigl( u^{-\frac{1}{2}}\bigr)$ converges monotonically to zero.
	Then
   \begin{align}
   		\liminf_{n\rightarrow\infty} \frac{\lambda_1^{(n)}}{F^{-1} \left( n^{-\frac{1}{2}}\right)} = 0\quad\text{and}\quad
   		\limsup_{n\rightarrow\infty} \frac{\lambda_1^{(n)}}{F^{-1} \left( n^{-\frac{1}{2}}\right)} = \infty 	\qquad \Pr\text{-a.s.}
   \end{align}
\end{cor}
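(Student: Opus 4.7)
The plan is to verify that the choice $g(u)=F^{-1}(u^{-1/2})$ makes the scale function $\Lambda_g$ identically constant (and equal to $1$) for large $u$, and then to feed this into the ``failure'' halves of Theorems \ref{thm:EigvalsUpperBound} and \ref{thm:EigvalsLowerBound}, which together pin down the limsup and liminf of $\lambda_1^{(n)}/g(n)$.

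First I would compute $\Lambda_g$. By hypothesis $F$ is bijective on $[0,v)$, so for all $u$ large enough that $u^{-1/2}<F(v)$, one has
\begin{align*}
  \Pr\left[w\leq g(u)\right]=F\bigl(F^{-1}(u^{-1/2})\bigr)=u^{-1/2},
\end{align*}
hence $\Lambda_g(u)=u^d\cdot (u^{-1/2})^{2d}=1$. I would also note that, since $F^{-1}$ is non-decreasing and $u^{-1/2}$ decreases in $u$, the function $g$ is non-increasing; moreover, by the corollary's standing assumption, $u\mapsto u^2 g(u)$ tends monotonically to zero, so Assumption~\ref{ass:g}\ref{ass:mondec} (hence also \ref{ass:mon}) holds. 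This is the one place where a tiny amount of care is needed: $g$ itself may fail to be monotone on some initial interval where $F$ is not yet bijective, but all statements in Theorems \ref{thm:EigvalsUpperBound} and \ref{thm:EigvalsLowerBound} only require the asymptotic behavior of $g$ and $\Lambda_g$, so I can freely modify $g$ on a bounded initial interval if needed.

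Next, for the limsup statement, I would apply Theorem~\ref{thm:EigvalsUpperBoundNecessary}. Since $\Lambda_g(u)\equiv 1$ for $u$ large, the limit
\begin{align*}
  \lim_{u\to\infty}\frac{\Lambda_g(u)}{\log\log u}=0
\end{align*}
holds, and Assumption~\ref{ass:g}\ref{ass:mondec} is available, so the theorem yields $\limsup_{n\to\infty}\lambda_1^{(n)}/g(n)=\infty$ \Pas

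Finally, for the liminf statement, I would invoke the second part of Theorem~\ref{thm:EigvalsLowerBound}. The tail integral
\begin{align*}
  \int_0^\infty u^{-1}\Lambda_g(u)\,\d u
\end{align*}
diverges at infinity because its integrand equals $u^{-1}$ eventually, so condition \eqref{equ:HomCond} fails. Since $g$ satisfies Assumption~\ref{ass:g}\ref{ass:mon}, the theorem applies and gives $\liminf_{n\to\infty}\lambda_1^{(n)}/g(n)=0$ \Pas\ Combining the two statements yields the corollary. There is no real obstacle; the only subtlety is the bookkeeping around the initial interval where $F$ may fail to be bijective, which is harmless because both conclusions depend on $g$ only through its tail.
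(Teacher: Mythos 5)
Your proposal is correct and follows exactly the route the paper intends: observe that $\Lambda_g(u)\equiv 1$ for $g(u)=F^{-1}\bigl(u^{-1/2}\bigr)$, then apply Theorem \ref{thm:EigvalsUpperBoundNecessary} (via $\Lambda_g/\log\log u\to 0$ and Assumption \ref{ass:g}\ref{ass:mondec}) for the limsup, and the failure of Condition \eqref{equ:HomCond} in Theorem \ref{thm:EigvalsLowerBound} for the liminf. The remark about modifying $g$ on a bounded initial interval is a sensible piece of bookkeeping that the paper leaves implicit.
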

We comment on this behavior in Remark \ref{rem:liminfsup} in Section \ref{subsec:proofUpper}.

Note that in the special case where there exists $\gamma>0$ such that the law $\Pr$ of the conductances fulfills $\Pr\left[ w\leq a \right] = a^\gamma$ for $a\in[0,1]$, Corollary \ref{cor:liminfsup} implies that
\begin{align}
   \liminf_{n\rightarrow\infty}\, n^{\frac{1}{2\gamma}}\lambda_1^{(n)} = 0\quad\text{and}\quad
   \limsup_{n\rightarrow\infty}\, n^{\frac{1}{2\gamma}} \lambda_1^{(n)} = \infty	\qquad \Pr\text{-a.s.}
   \label{equ:liminfsupPoly}
\end{align}

Finally, we let $\psi_1^{(n)}$ be the principal Dirichlet eigenvector associated with the principal Dirichlet eigenvalue $\lambda_1^{(n)}$ and normalize it such that $\| \psi_1^{(n)} \|_2 = 1$.
By virtue of Remark \ref{rem:PF} we can assume without loss of generality that $\psi_1^{(n)}$ is nonnegative.
We show that if we define the local speed measure $\pi\colon\Z^d \to [0,\infty)$ by
\begin{align}
  \pi_x = \sum_{y\colon y\sim x} w_{xy}\, , \quad x\in \Z^d\, ,
  \label{equ:DefPi}
\end{align}
then \Pas as $n$ tends to infinity, the principal Dirichlet eigenvector $\psi_1^{(n)}$ localizes in the sequence of sites $(z_n)_{n\in\N}$ that minimize $\pi$ over $B_n$.
Since we assume that $F$ is continuous, this sequence is \Pas uniquely defined.
Further, since $F$ is continuous, for each $a\in[0,1)$ there exists $s\geq 0$ such that $F(s) = a$.
For what follows, we thus define the function $g$ as
\begin{align}
  \label{def:ThmLocFuncG}
  g:[0,\infty)\to [0,\infty)\colon u\mapsto \sup\,\left\{ s\geq 0 \colon F(s) = u^{-1/2}\right\}\, .
\end{align}

\begin{samepage}
\begin{thm}[Localization of the principal Dirichlet eigenvector]\label{thm:Loc}
  Let $F$ be continuous and vary regularly at zero with index $\gamma\in [0,1/4)$.
  Assume that there exists $a^\ast>0$ such that $F(ab) \geq bF(a)$ for all $a\leq a^\ast$ and all $0\leq b\leq 1$.
  In the case where $\gamma=0$, assume additionally that there exists $\eps_1\in (0,1)$ such that the product $n^{2+\eps_1} g \left( n \right)$ converges monotonically to zero as $n$ grows to infinity where $g$ is given by \eqref{def:ThmLocFuncG}.
  For $n\in\N$ let $z_n$ be the site that minimizes $\pi$ over $B_n$.
  Then \Pas the mass of the principal Dirichlet eigenvector $\psi_1^{(n)}$ with zero Dirichlet conditions outside the box $B_n$ increasingly concentrates in the site $z_n$, i.e.,
  \begin{align*}
    \psi_1^{(n)} (z_n) \to 1\quad\Pr\text{-a.s. as }n\to\infty\, .
  \end{align*}
  More precisely, \Pas for $n$ large enough
  \begin{align}
  \psi_1^{(n)} (z_n)^2 \geq  1 - n^{-\eps_1/4}\, ,
  \label{equ:Psi1zn}
  \end{align}
  where for $\gamma>0$ the value of $\eps_1\in (0,1)$ is chosen such that $1/(2\gamma) > 2+\eps_1$.
\end{thm}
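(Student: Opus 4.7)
The strategy I propose is a Rayleigh--Ritz quasimode argument with $\delta_{z_n}$ as trial vector. Set $\alpha=\psi_1^{(n)}(z_n)$. Expanding $\delta_{z_n}=\sum_k \psi_k^{(n)}(z_n)\,\psi_k^{(n)}$ in the orthonormal Dirichlet eigenbasis of $-\mathcal{L}^{\boldsymbol{w}}$ on $B_n$ and noting $\mathcal{E}^{\boldsymbol{w}}(\delta_{z_n})=\pi_{z_n}=\sum_k \psi_k^{(n)}(z_n)^2 \lambda_k^{(n)}$ leads to the standard inequality
\[
1-\alpha^2 \;\leq\; \frac{\pi_{z_n}-\lambda_1^{(n)}}{\lambda_2^{(n)}-\lambda_1^{(n)}}.
\]

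For the numerator, by Lemma \ref{lem:BC:ii} and the condition $\Lambda_g(n)\to\infty$ for $g$ as in \eqref{def:ThmLocFuncG}, $\Pr$-a.s.\ for $n$ large the minimizer $z_n$ is a $g(n)$-trap, so in particular $w_{z_n,y}\leq g(n)$ for each $y\sim z_n$. Multiplying the eigenvalue equation at $z_n$ by $\alpha$ and applying Cauchy--Schwarz,
\[
(\pi_{z_n}-\lambda_1^{(n)})\,\alpha \;=\;\sum_{y\sim z_n}w_{z_n,y}\,\psi_1^{(n)}(y)\;\leq\;\sqrt{2d}\,g(n)\,\sqrt{1-\alpha^2}.
\]
Combined with the quasimode inequality this yields the self-consistent bound $1-\alpha^2\leq 4d\,g(n)^2/(\lambda_2^{(n)}-\lambda_1^{(n)})^2$ (valid once $\alpha^2\geq\tfrac12$, which will be closed a posteriori).

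The central obstacle is lower-bounding $\lambda_2^{(n)}-\lambda_1^{(n)}$ in the strong sense needed: since the next-best trap $z_n^{(2)}$ yields $\lambda_2^{(n)}\leq \pi_{z_n^{(2)}}\asymp g(n)$, the naive spectral gap is only of order $g(n)$ and the above bound then merely reads $1-\alpha^2\leq O(1)$. My plan is therefore to pass to a refined quasimode by identifying all small-$\pi$ sites at once: set $\mathcal{T}_n=\{y\in B_n:\pi_y\leq K g(n)\}$ for a large constant $K$; by the Borel--Cantelli results of Section \ref{sec:BC} combined with the assumption $F(ab)\geq b F(a)$ that controls cluster formation, $|\mathcal{T}_n|$ is $\Pr$-a.s.\ bounded in $n$ and the sites of $\mathcal{T}_n$ are spatially well-separated. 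Decompose $\psi_1^{(n)}=\sum_{z\in\mathcal{T}_n}\psi_1^{(n)}(z)\delta_z+\phi$ with $\phi\in\ell^2(B_n\setminus\mathcal{T}_n)$; the percolation estimates of Section \ref{sec:perc} together with the path argument of Section \ref{sec:path} produce a Poincar\'e inequality on $B_n\setminus\mathcal{T}_n$, so that the principal eigenvalue $\mu$ of the restricted operator satisfies $\mu\gtrsim n^{-2}$ $\Pr$-a.s. A resolvent estimate for $\phi$ from the coupled eigenvalue equation then gives $\|\phi\|_2^2\leq C\,n^4 g(n)^2$, while the Perron--Frobenius positivity of $\psi_1^{(n)}$ (cf.~Remark~\ref{rem:PF}) together with the well-separation of the trap $\pi$-values bounds the contributions $\sum_{z\in\mathcal{T}_n\setminus\{z_n\}}\psi_1^{(n)}(z)^2$ to the same small order.

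Putting everything together yields $1-\alpha^2\leq C\,n^4 g(n)^2\leq Cn^{-2\eps_1}\leq n^{-\eps_1/4}$ by the hypothesis $n^{2+\eps_1}g(n)\to 0$. The hardest step is the control of the off-diagonal contributions $\psi_1^{(n)}(z)^2$ for $z\in\mathcal{T}_n\setminus\{z_n\}$: two traps with near-identical $\pi$-values could resonate and prevent single-site localization, and ruling this out $\Pr$-a.s.\ for every large $n$ is where the structural assumption $F(ab)\geq bF(a)$ enters decisively, through a delicate Borel--Cantelli argument on the extreme order statistics of $(\pi_y)_{y\in B_n}$.
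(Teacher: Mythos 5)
Your overall architecture (split the eigenvector into a ``bulk'' part controlled by a Poincar\'e inequality plus contributions from a few deep traps, then rule out resonances between traps via extreme--value statistics) is the right one and matches the paper's in spirit, but two of your key intermediate claims do not hold as stated. The fatal one is the Poincar\'e inequality $\mu\gtrsim n^{-2}$ for the operator restricted to $B_n\setminus\mathcal{T}_n$ with $\mathcal{T}_n=\{y\colon\pi_y\leq Kg(n)\}$. Since $g(n)=F^{-1}(n^{-1/2})\sim n^{-1/(2\gamma)}$ decays much faster than $n^{-2}$ when $\gamma<1/4$, the set $B_n\setminus\mathcal{T}_n$ still contains of order $n^d F_\pi(n^{-2})\sim n^{d(1-4\gamma)}\to\infty$ sites $y$ with $Kg(n)<\pi_y\ll n^{-2}$; each such $\delta_y$ is a test function with Rayleigh quotient $\pi_y\ll n^{-2}$, so the restricted principal eigenvalue is $o(n^{-2})$ and your resolvent bound $\|\phi\|_2^2\leq Cn^4g(n)^2$ collapses. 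This is exactly why the paper does not threshold on $\pi$ at level $g(n)$: it instead works with the supercritical percolation cluster $\D_n$ of edges with conductance exceeding $g(n^{1-\eps})$, for which Proposition \ref{prop:SpectralGapD} gives a Poincar\'e constant of order $g(n^{1-\eps})(\log n)^{-4d^2}$ (not $n^{-2}$), and then exploits the rapid decay of $g$ --- namely $g(n^{1-\eps_3})/g(n^{1-\eps_2})\leq n^{-c\eps_1}$ for $\eps_3<\eps_2$ --- to make the ratio $\lambda_1^{(n)}/\bigl(g(n^{1-\eps_2})(\log n)^{-4d^2}\bigr)$ small (Lemma \ref{lem:Loc:MassDn}). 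Relatedly, your assertion that $|\mathcal{T}_n|$ is a.s.\ bounded in $n$ is doubtful (the count is asymptotically Poisson, so its $\limsup$ is a.s.\ infinite); the paper only needs the a.s.\ eventual \emph{sparseness} of the deep-trap set (Corollary \ref{cor:BC:i3} and Lemma \ref{lem:InSparse}), which is a weaker and provable statement.

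The second gap is that the step you yourself identify as hardest is left entirely unproved, and it is not a routine Borel--Cantelli application. The anti-resonance statement needed is that $1-\pi_{1,B_n}/\pi_{2,B_n}>n^{-\eps}$ a.s.\ for all large $n$ (Lemma \ref{lem:QuotOrder}); the probability of the complementary event is only of order $n^{-\eps}$, which is \emph{not} summable, so first Borel--Cantelli over $n$ fails and the paper has to run a careful renewal-type decomposition (intersecting $E_m$ with $E_{m-2k-3}^{\rm c}$ and tracking when new records of the order statistics occur) on top of a lattice decomposition that restores independence between the near-minima of the dependent field $(\pi_x)$. Moreover, even granting the spacing estimate, you still need a deterministic mechanism converting ``$\pi_{z}$ well separated from $\pi_{z_n}$'' into ``$\psi_1^{(n)}(z)$ small''; in the paper this is Lemma \ref{lem:uniquemax}, a non-obvious variational/Perron--Frobenius perturbation argument bounding $\psi_1^{(n)}(y)$ by $2\max_{x\sim y}\psi_1^{(n)}(x)/(1-\pi_{z}/\pi_y)$, which then feeds back into the $\ell^2(\D_n)$ bound via sparseness. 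As written, your proposal asserts both of these ingredients rather than proving them, and with the broken $n^{-2}$ Poincar\'e inequality the quantitative chain $1-\alpha^2\leq Cn^4g(n)^2\leq n^{-\eps_1/4}$ does not go through.
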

We prove this theorem in Section \ref{subsec:ProofLoc}.
\end{samepage}
\begin{remark}[Dimension one]
  Note that we cannot expect that a result like Theorem \ref{thm:Loc} holds in dimension one.
  This is because in dimension one, the probabilistic cost to generate a hardly reachable area is independent of the area's diameter.
\end{remark}
As a consequence of Theorem \ref{thm:Loc} we have the following two corollaries, which we prove in Section \ref{subsec:ProofCorLoc}.
\begin{cor}
  \label{cor:Loc}
  Assume that $F$ fulfills the conditions of Theorem \ref{thm:Loc}.
  Then the principal Dirichlet eigenvalue $\lambda_1^{(n)}$ \Pas behaves like $\min_{x\in B_n} \pi_x$ for large $n$, i.e.,
  \begin{align}
    \mathbb{P}  \left[ \lim_{n\to\infty} \frac{\lambda_1^{(n)}}{\min_{x\in B_n} \pi_x} = 1 \right] = 1\, .
    \label{equ:PrincipalEigValMinPi}
  \end{align}
\end{cor}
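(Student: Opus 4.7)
The plan is to establish $\lambda_1^{(n)}/\pi_{z_n}\to 1$ by sandwiching the ratio between an easy variational upper bound and a lower bound extracted from the eigenvalue equation at $z_n$, with the localization estimate \eqref{equ:Psi1zn} from Theorem \ref{thm:Loc} doing the essential work.

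For the upper bound I would simply plug the indicator $f=\delta_{z_n}$ into the Rayleigh quotient \eqref{equ:Variational}. Since $\|\delta_{z_n}\|_2=1$ and
\begin{align*}
\mathcal{E}^{\boldsymbol{w}}(\delta_{z_n}) \;=\; \langle \delta_{z_n},-\mathcal{L}_{\boldsymbol{w}}\delta_{z_n}\rangle \;=\; \sum_{y\sim z_n}w_{z_n y}\;=\;\pi_{z_n}\;=\;\min_{x\in B_n}\pi_x,
\end{align*}
the variational principle gives $\lambda_1^{(n)}\leq \min_{x\in B_n}\pi_x$ deterministically, for every environment.

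For the matching lower bound I would evaluate the eigenvalue equation $-\mathcal{L}_{\boldsymbol{w}}\psi_1^{(n)}=\lambda_1^{(n)}\psi_1^{(n)}$ at the site $z_n$ (note $z_n$ lies in the interior of $B_n$ for large $n$, so no boundary contribution enters). This yields
\begin{align*}
\lambda_1^{(n)}\,\psi_1^{(n)}(z_n) \;=\; \pi_{z_n}\psi_1^{(n)}(z_n) \;-\; \sum_{y\sim z_n} w_{z_n y}\,\psi_1^{(n)}(y),
\end{align*}
whence, using nonnegativity of $\psi_1^{(n)}$ (Remark \ref{rem:PF}) and bounding each $w_{z_n y}$ by $\pi_{z_n}$,
\begin{align*}
\lambda_1^{(n)} \;\geq\; \pi_{z_n}\left(1 \;-\; \frac{\max_{y\sim z_n}\psi_1^{(n)}(y)}{\psi_1^{(n)}(z_n)}\right).
\end{align*}
Now the normalization $\|\psi_1^{(n)}\|_2=1$ combined with \eqref{equ:Psi1zn} gives $\sum_{x\neq z_n}\psi_1^{(n)}(x)^2 \leq n^{-\eps_1/4}$, so in particular $\max_{y\sim z_n}\psi_1^{(n)}(y) \leq n^{-\eps_1/8}$, while $\psi_1^{(n)}(z_n)\to 1$ almost surely. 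Hence the bracketed factor tends to $1$ almost surely, and together with the variational upper bound we obtain \eqref{equ:PrincipalEigValMinPi}.

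There is no real obstacle here beyond correctly invoking Theorem \ref{thm:Loc}; the only care needed is to note that $z_n$ eventually sits in the interior of $B_n$ (so that the eigenvalue identity holds without boundary corrections) and that Perron-Frobenius lets us take $\psi_1^{(n)}\geq 0$, which is what makes the single-site extraction at $z_n$ produce a lower bound rather than merely an identity.
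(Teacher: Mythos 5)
Your proof is correct and follows essentially the same route as the paper: the trivial test-function bound $\lambda_1^{(n)}\leq\min_{x\in B_n}\pi_x$ from \eqref{equ:TrivialUpperBound}, combined with a matching lower bound extracted at the site $z_n$ using the localization estimate \eqref{equ:Psi1zn}. The only (cosmetic) difference is that the paper obtains the lower bound by keeping in the Dirichlet form $\mathcal{E}^{\boldsymbol{w}}(\psi_1^{(n)})$ only the edges incident to $z_n$, yielding the factor $\bigl(\sqrt{1-n^{-\eps_1/4}}-n^{-\eps_1/8}\bigr)^2$, whereas you read off the pointwise eigenvalue equation at $z_n$; both rest on exactly the same inputs and give a prefactor tending to $1$.
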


Let $F_\pi$ be the distribution function of the random variable $\pi$, i.e., the distribution function of the sum of $2d$ independent copies of the conductance $w$.
Note that since $F$ is continuous, $F_\pi$ is continuous as well.
Similar to \cite[p.\ 7]{Faggionato2012}, we define
\begin{align}
\label{equ:Defh}
 h\colon (0,\infty) \to (0,\infty)\colon u \mapsto \inf\,\left\{ s\colon \frac{1}{F_\pi (1/s)} = u \right\}\, .
\end{align}
If $F$ varies regularly at zero with index $\gamma\geq 0$, then by virtue of Lemma \ref{lem:DistrPi}, it follows that $F_\pi$ varies regularly at zero with index $2d\gamma$.
It thus follows by virtue of \cite[Proposition 0.8(v)]{Resnick1987} that $h$ varies regularly at infinity with index $1/(2d\gamma)$.
Therefore there exists a function $L^\ast$ that varies slowly at infinity such that
\begin{align}
\label{def:Last}
 h(|B_n|) = n^{\frac{1}{2\gamma}} L^\ast (n)\, .
\end{align}
 
\begin{cor}
\label{cor:ConvInLaw}
   Assume that $F$ fulfills the conditions of Theorem \ref{thm:Loc} with $\gamma>0$ and let $L^\ast$ be as in \eqref{def:Last}.
   Then as $n$ tends to infinity, the product $L^\ast (n) n^{\frac{1}{2\gamma}} \lambda_1^{(n)}$ converges in distribution to a non-degenerate random variable.
   More precisely,
   \begin{align}
      \lim_{n\to \infty}\Pr \left[ L^\ast (n) n^{\frac{1}{2\gamma}} \lambda_1^{(n)} \leq \zeta\right] = 1 - \exp\left( -\zeta^{2d\gamma} \right)\qquad\text{for all } \zeta\in [0,\infty)\, .
   \end{align}
\end{cor}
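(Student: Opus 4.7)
\medskip

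The plan is to combine Corollary~\ref{cor:Loc} with an extreme value statement about the i.i.d.-like field $(\pi_x)_{x\in B_n}$. By Corollary~\ref{cor:Loc}, $\lambda_1^{(n)}/\min_{x\in B_n}\pi_x \to 1$ $\Pr$-a.s., so by Slutsky it suffices to show that
\begin{align*}
  L^\ast(n)\,n^{\frac{1}{2\gamma}} \min_{x\in B_n}\pi_x \xrightarrow{d} \zeta \mapsto 1 - \exp(-\zeta^{2d\gamma}).
\end{align*}
Set $a_n := 1/h(|B_n|)$, so that by definition of $h$ and continuity of $F_\pi$ one has $F_\pi(a_n) = 1/|B_n|$, and by \eqref{def:Last} the scaling reads $1/a_n = n^{1/(2\gamma)}L^\ast(n)$. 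Thus it suffices to prove that for every fixed $\zeta\geq 0$,
\begin{align*}
  \Pr\bigl[\min_{x\in B_n}\pi_x \leq a_n\zeta\bigr] \to 1-\exp(-\zeta^{2d\gamma}).
\end{align*}

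The first step is a regular-variation calculation. By Lemma~\ref{lem:DistrPi}, $F_\pi$ varies regularly at zero with index $2d\gamma$, hence $F_\pi(a_n\zeta)/F_\pi(a_n)\to \zeta^{2d\gamma}$, which together with $|B_n|F_\pi(a_n)=1$ gives $|B_n|\,F_\pi(a_n\zeta)\to \zeta^{2d\gamma}$. If the random variables $\{\pi_x\}_{x\in B_n}$ were independent, the desired Weibull limit would follow directly from $\Pr[\min_{x\in B_n}\pi_x>a_n\zeta]=(1-F_\pi(a_n\zeta))^{|B_n|}\to \exp(-\zeta^{2d\gamma})$.

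The main obstacle — and the only nontrivial point — is the local dependence: $\pi_x$ and $\pi_y$ share a conductance iff $x\sim y$, and are independent otherwise. I would handle this by Poisson approximation applied to the counting variable $N_n(\zeta):=\#\{x\in B_n: \pi_x\leq a_n\zeta\}$, whose mean we already know converges to $\zeta^{2d\gamma}$. Using the Chen--Stein method with neighborhoods $\{y:|y-x|_\infty\leq 1\}$ of bounded cardinality, the total variation distance from a Poisson law with parameter $|B_n|F_\pi(a_n\zeta)$ is controlled by the two standard sums $b_1=\sum_x\sum_{|y-x|_\infty\leq 1}\Pr[\pi_x\leq a_n\zeta]\Pr[\pi_y\leq a_n\zeta]$ and $b_2=\sum_x\sum_{0<|y-x|_\infty\leq 1}\Pr[\pi_x\leq a_n\zeta,\pi_y\leq a_n\zeta]$. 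Both are of order $|B_n|F_\pi(a_n\zeta)^2=O(1/|B_n|)\to 0$, using independence for $|y-x|_\infty=1$ except on a single shared edge (which still yields an overall factor $F_\pi(a_n\zeta)^{2-1/(2d)}$, small enough), and $b_3$ is zero in this local setting. Hence $N_n(\zeta)$ converges to Poisson$(\zeta^{2d\gamma})$, and
\begin{align*}
  \Pr\bigl[\min_{x\in B_n}\pi_x\leq a_n\zeta\bigr]=\Pr[N_n(\zeta)\geq 1]\to 1-\exp(-\zeta^{2d\gamma}),
\end{align*}
which together with Corollary~\ref{cor:Loc} and Slutsky's lemma completes the proof. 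The only delicate ingredient beyond bookkeeping is verifying that the shared-edge contribution in $b_2$ really decays; the worst case $y=x+e_i$ gives $\Pr[\pi_x\leq a_n\zeta,\pi_y\leq a_n\zeta]\leq \Pr[\text{all }4d-1\text{ involved edges }\leq a_n\zeta]$, which is $F(a_n\zeta)^{4d-1}$ and hence negligible by the regular-variation estimate on $F_\pi(a_n\zeta)=\Theta(F(a_n\zeta)^{2d})$.
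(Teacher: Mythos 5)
Your argument is correct, and it reaches the Weibull limit by a genuinely different device than the paper. You reduce to the minimum of the field $(\pi_x)_{x\in B_n}$ via Corollary \ref{cor:Loc} and Slutsky exactly as the paper does, and you isolate the same two analytic inputs: $|B_n|\,F_\pi(a_n\zeta)\to\zeta^{2d\gamma}$ from the regular variation of $F_\pi$ (Lemma \ref{lem:DistrPi}), and the negligibility of the shared-edge pair probability, $|B_n|\,\Pr[\pi_0\le a_n\zeta,\,\pi_{\vec e_1}\le a_n\zeta]\le |B_n|\,F(a_n\zeta)^{4d-1}\to 0$; these are precisely \eqref{equ:WatsonCond1} and \eqref{equ:WatsonCond2} in the paper. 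Where you diverge is in how the local dependence is absorbed: you apply the Chen--Stein Poisson approximation to $N_n(\zeta)=\#\{x\in B_n:\pi_x\le a_n\zeta\}$ with neighborhoods $\{y:|y-x|_\infty\le 1\}$, noting $b_3=0$ because $\pi_x$ is independent of the field outside its neighborhood (edge sets are disjoint), while the paper follows Watson's $m$-dependent extreme value argument via Bonferroni inequalities, bounding $\Pr[\min_{x}\pi_x>a_n\zeta]$ between truncations of the inclusion--exclusion series and showing term by term that the $q$-fold sums converge to $\zeta^{2qd\gamma}/q!$, with the terms containing adjacent pairs contributing at lower order. Your route buys a shorter proof with no combinatorial bookkeeping over configurations of adjacent pairs, at the cost of invoking the Chen--Stein theorem (e.g.\ Arratia--Goldstein--Gordon) as an external black box and of a careful statement of its hypotheses; the paper's route is elementary and self-contained modulo the citation of Watson. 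The one computation you must not skip is the one you flag yourself: $|B_n|F(a_n\zeta)^{4d-1}=\Theta(|B_n|^{1/(2d)-1})\to 0$, which uses $F_\pi(a)=\Theta(F(a)^{2d})$ near zero; this is available from $F_\pi(a)\le F(a)^{2d}$ together with \eqref{equ:LowerBoundFPi} (or Lemma \ref{lem:InequDistrPi}) and the regular variation of $F$.
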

We prove this corollary in Section \ref{subsec:ProofCorLoc}.

\begin{remark}[Constant speed]
  If the conductances are bounded from above,
  we conjecture that, qualitatively, the above results should also hold for the constant-speed random conductance model, i.e., where the Laplacian is given by
  \begin{align*}
    (\mathcal{L}_{\boldsymbol{w}} f)(x) = \pi_x^{-1} \sum_{y\colon |x-y|_1 =1} w_{xy}(f(y) - f(x)) \qquad (x\in\Z^d, f\in\ell^2 (\Z^d))\, .
  \end{align*}
  In this case, the critical exponent $\gamma_{\rm c}^\pi$ should be $\frac{1}{8}\frac{d}{d-1/2}$ (cf.\ \cite[Theorem 1.8 (1)]{Boukhadra2015}). Further, the typical trapping structures are not single sites but pairs of sites (cf.\ \cite[Figure 1]{Andres2016}). In a similar way as we adapt the proof techniques of \cite{Boukhadra2015} for the variable-speed case, this should be possible for the constant-speed model. However, the proofs become much more technical.
\end{remark}

\subsection{Comparison with former results}
\label{subsec:literature}
Our investigation on the spectral behavior of the random conductance generator supplements the results of former research.

For the lower bound on the principal Dirichlet eigenvalue $\lambda_1^{(n)}$, we adapt a path argument from \cite[Lemma 5.1]{Boukhadra2015}, see Section \ref{sec:path}.
In that paper as well as in \cite[Lemma 3.4]{Boukhadra2010}, the authors find a lower bound for the spectral gap $\lambda_1^{(n)}$ of the random walk that is additionally killed with a rate $\lambda=\lambda(n)$ on a certain percolation cluster $\C^\xi$.
Because of the massive killing, the spectral gap decays quite slowly with $n$.
Here, on the other hand, we deal with a scenario where the spectral gap decays very rapidly to zero.

When the spectral gap decays as $n^{-2}$ without introducing additional killing inside the box $B_n$, we are in the regime of spectral homogenization.
Boivin and Depauw \cite{Boivin2003} proved spectral homogenization for stationary and ergodic conductances that fulfill the uniform ellipticity condition, i.e., where there exist positive and finite constants $a,b$ that uniformly bound the conductances from above and below. 
As a special case they show that for uniformly elliptic i.i.d.\ conductances there exists a constant $c>0$ such that if $\lambda_1^{(n)}<\lambda_2^{(n)}\leq\ldots\leq\lambda_k^{(n)}$ are the first $k$ Dirichlet eigenvalues of $-\mathcal{L}_{\boldsymbol{w}}$, then for almost every realization of the conductance landscape
\begin{align*}
  \lim_{n\to\infty} n^2 \lambda_k^{(n)} = c\lambda_k\, ,
\end{align*}
where $\lambda_k$ is the $k$th eigenvalue of the operator $-\Delta$ in $(-1,1)^d$ with zero Dirichlet conditions.
Additionally, the principal Dirichlet eigenfunction of $-\mathcal{L}_{\boldsymbol{w}}$ converges, properly rescaled, to the principal Dirichlet eigenfunction of the operator $-\Delta$ in $(-1,1)^d$ with zero Dirichlet conditions \cite[Theorem 1, Corollary 1]{Boivin2003}.

In the special case of dimension one, the uniform ellipticity condition was already weakened by Faggionato \cite{Faggionato2012}:
She showed that for $d=1$ a finite inverse moment of $w$ is sufficient for spectral homogenization \cite[Proposition 2.6]{Faggionato2012}.
Further, if the inverse conductances $w^{-1}$ are i.i.d.\ and in the domain of attraction of an $\alpha$-stable law with $0<\alpha<1$, then Faggionato showed that the vector of the first $k$ Dirichlet eigenvalues rescaled by $n^{1+1/\alpha}$ times a slowly varying function converges in distribution to the vector of the first $k$ Dirichlet eigenvalues of a random generalized differential operator \cite[Theorem 2.5]{Faggionato2012}.

Recent results from two teams of authors imply that the uniform ellipticity condition can also be weakened in higher dimensions:
Neukamm, Sch\"affner, and Schl\"omerkemper \cite[Corollary 3.4, Proposition 3.18]{Neukamm2016} proved amongst other results that for $\gamma>1/4$ the Dirichlet energy of $-\mathcal{L}_{\boldsymbol{w}}$ (as in \eqref{equ:DefDirichletEnergy}) $\Gamma$-converges to a deterministic, homogeneous integral. This together with their compactness result \cite[Lemma 3.9]{Neukamm2016} and \cite[Theorem 13.5]{DalMaso1993} implies that Conditions I--IV of \cite[Chapter 11]{JKO1994} are fulfilled and spectral convergence follows.
On the other hand, Flegel, Heida, and Slowik \cite{FHS2017} use the method of stochastic two-scale convergence by Zhikov and Pyatniskii \cite{Zhikov2006} to show that the Poisson equation homogenizes. Their approach is similar to the one of Faggionato \cite{Faggionato2008} who already employed two-scale convergence in order to show homogenization for a Laplacian with shifted spectrum and bounded conductances.
From the homogenization of the Poisson equation, the spectral homogenization follows again by \cite[Chapter 11]{JKO1994}.
Furthermore, Flegel, Heida, and Slowik identify the corresponding limit operator.

The basis for both \cite{Neukamm2016} and \cite{FHS2017} are Poincar{\'e} and Sobolev inequalities that were already used by Andres, Deuschel, and Slowik \cite{Andres2016} to prove a quenched local CLT under suitable moment conditions.

\subsection{Survey on proofs for upper bounds}
\label{subsec:proofUpper}
Let us consider the variational formula \eqref{equ:Variational}. The equation implies that for any real-valued test function $f\in\ell^2 (\Z^d)$ with $\supp\, f\subseteq B_n$ and $\| f \|_2 = 1$ we can estimate
\begin{align*}
  \lambda_1^{(n)} \leq \langle f, -\mathcal{L}_{\boldsymbol{w}} f \rangle = \frac{1}{2}\sum_{x\in\Z^d}\sum_{y\colon y\sim x} w_{xy} (f(x) - f(y))^2\, .
\end{align*}
Suppose that $z_n$ is a random site that minimizes $\pi$ (see \eqref{equ:DefPi}) in $B_n$.
Now we choose the function $f$ such that its whole mass is concentrated in the site $z_n\in B_n$, i.e., $f=\delta_{z_n}$. When we insert this into the variational formula \eqref{equ:Variational}, then we obtain that 
\begin{align}
  \lambda_1^{(n)} \leq \min_{x\in B_n} \pi_x \leq 2d \min_{x\in B_n}\, \max_{y\colon x\sim y}w_{xy}\, .
  \label{equ:TrivialUpperBound}
\end{align}
It remains to find conditions under which the above RHS can be bounded from above by a decreasing function $g(n)$. As we have already mentioned before, a quantity which carries this information, is the function $\Lambda_g$ defined in \eqref{equ:DefLambda}, as we see in the two following proofs.

\begin{proof}[Proof of Theorem \ref{thm:EigvalsUpperBoundSufficient}]
  Condition \eqref{equ:UpperCond} together with Lemma \ref{lem:BC:ii} implies that \Pas for $n$ large enough there exists a site $z_n \in B_n$ such that $\max_{y\colon y\sim z_n} w_{z_n y} \leq g(n)$. Choose the test function $f_n = \delta_{z_n}$ and insert it into the variational formula \eqref{equ:Variational}. The claim follows.
\end{proof}

\begin{proof}[Proof of Theorem \ref{thm:EigvalsLowerBound} if Condition \eqref{equ:HomCond} fails]
  Now we have $c\int_0^\infty u^{-1} \Lambda_g (u)\, \d u = \infty$ for any $c>0$.
Let $\mathfrak{N} = \left\{ e\in\edges_d \colon 0\in e \right\}$ be the set of edges incident to the origin and note that $|\mathfrak{N}|=2d$.
A substitution of variables and Lemma \ref{lem:BC:i} imply that for any $c>0$ the following event occurs \Pas infinitely often as $n\rightarrow\infty$: There exists a site $z_n\in B_{n+1}$ such that all edges in $\mathfrak{N}\circ\tau_{z_n}$ have conductance smaller than or equal to $g(cn)$.
Here, $\tau_{z}$ ($z\in\Z^d$) denotes the spatial shift operator.

Every time this event occurs, we choose the test function $f_n = \delta_{z_n}$ (as in the proof of Theorem \ref{thm:EigvalsUpperBoundSufficient}), insert it into the variational formula \eqref{equ:Variational} and immediately obtain that \Pas
\begin{align*}
	\liminf_{n\rightarrow\infty} \frac{\lambda_1^{(n)}}{g(cn)} \leq 2d \quad\text{for any $c>0$}\, .
\end{align*}
We now show that this implies the claim.
Let $c>1$ and recall that we have assumed that one of the Assumptions \ref{ass:g} \ref{ass:RVless} or \ref{ass:mon} is true. 
In any case it follows that eventually $2 g(n) \geq c^2 g(cn)$.
It follows that \Pas $\liminf_{n\rightarrow\infty} \frac{\lambda_1^{(n)}}{g(n)} \leq 4dc^{-2}$.
This holds for any $c>1$, implying that \Pas $\liminf_{n\rightarrow\infty} \lambda_1^{(n)}/g(n) = 0$.
\end{proof}

\begin{remark}
  \label{rem:liminfsup}
  Now we can intuitively understand the result of Corollary \ref{cor:liminfsup}:
  For the choice $g(u) = F^{-1} (u^{-1/2})$, the function $\Lambda_g$ is constant one. Therefore for every $c>0$ \Pas there exists an infinite subsequence $n_k$ where the box $B_{n_k}$ contains a $(cg(n_k))$-trap. However, as we will see in Section \ref{subsec:proofseigval}, \Pas there also exists an infinite subsequence $n'_k$ where the box $B_{n'_k}$ does not contain a sufficiently good trap. It follows that the asymptotics of $\lambda_1^{(n)}$ fluctuate around the asymptotics of $F^{-1} (u^{-1/2})$.
\end{remark}

\subsection{Relation to heat-kernel upper bounds}
\label{subsec:heatkernel}
In this section we explain why the subdiffusive scaling of the principal Dirichlet eigenvalue contradicts the validity of a local central limit theorem.
We say that $\lambda_1^{(n)}$ scales subdiffusively, if $\lambda_1^{(n)} \in o(n^{-2})$.
In contrast, if the principal Dirichlet eigenvalue $\lambda_1^{(n)}$ scales like $n^{-2}$, then we call this diffusive scaling.

The local CLT was established in 2015 by the two teams of authors Andres, Deuschel, Slowik \cite{Andres2016} and Boukhadra, Kumagai, Mathieu \cite{Boukhadra2015} who require that for i.i.d.\ conductances  there exists $\eps>0$ such that $\E [w]$ and $\E [w^{-1/4-\eps}]$ are finite.
Let us briefly comment on this.

First we define the random walk among random conductances. This is the Markov chain $X_t$ that is generated by the operator $\mathcal{L}_{\boldsymbol{w}}$. Its behavior is as follows:
When the walker is at a site $x\in \mathbb{Z}^d$ it waits for an exponential time with expectation $\pi_x^{-1}$ (see \eqref{equ:DefPi}) and then jumps to one of the neighboring sites. This is why we call $\pi_x$ the local speed of the random walk at the site $x$. To which neighbor the random walker jumps is random with probabilities proportional to the corresponding conductances: If $z$ is a specific neighbor of $x$, the random walker jumps to $z$ with probability $w_{xz}/\pi_x$.
We call $P_x^{\boldsymbol{w}}$ the probability w.r.t.\ to the random walk where the superscript $\vec{w}$ refers to a fixed environment (quenched probability) and the subscript $x$ refers to the starting point of the random walker: $P_x^{\boldsymbol{w}} \left[ X_0 = x \right] = 1$. The corresponding expectation is called $E_x^{\boldsymbol{w}}$.

Let $\tau_{A}$ be the escape time from a set $A\subset \Z^d$, i.e., $\tau_A = \inf\left\{ t\geq 0\colon X_t \notin A \right\}$.
There exists a natural relation between the principal Dirichlet eigenvalue of the operator $-\mathcal{L}_{\boldsymbol w}$ and the expected escape time $E_x^{\boldsymbol{w}} \left[ \tau_{B_n}\right]$ from the box $B_n$, see \cite[Section 8.4.1]{BovierHollander}:
\begin{align*}
  \lambda_1^{(n)} \geq \left( \max_{z\in B_n} E_z^{\boldsymbol{w}} \bigl[\tau_{B_n}\bigr] \right)^{-1}\, .
\end{align*}
Thus, an upper bound for the principal Dirichlet eigenvalue $\lambda_1^{(n)}$ implies a lower bound on the maximal expected escape time from the box $B_n$.

Now Lemma 2.1(i) of \cite{Boukhadra2015} implies that if the heat kernel
\begin{align*}
  p_t (x,y) := P_x^{\boldsymbol{w}}\left[ X_t = y \right]\qquad (x,y\in\Z^d\,,\; t\geq 0)\, , 
\end{align*}
has a diffusive on-diagonal upper bound, i.e. there exists $c\in(0,\infty)$ and a random $n_0\in\N$ such that
\begin{align*}
  p_{n^2} (x,y) \leq c n^{-d} \qquad\forall x,y\in B_n\, ,~n\geq n_0\, ,
\end{align*}
then $\max_{z\in B_n} E_z^{\boldsymbol{w}} \bigl[\tau_{B_n}\bigr]\sim n^{-2}$. Diffusive heat-kernel upper bounds are a necessary condition for the validity of a local CLT.

But if we assume that the principal Dirichlet eigenvalue scales subdiffusively, i.e., $\lambda_1^{(n)} \in o(n^{-2})$, then $\max_{z\in B_n} E_z^{\boldsymbol{w}} \bigl[\tau_{B_n}\bigr]$ grows faster than $n^{-2}$ and therefore a subdiffusively scaling principal Dirichlet eigenvalue contradicts the validity of a local CLT.

We can explain the exploding escape times by showing that a large box contains some sites where the expected time to even leave the initial position is anomalously long.
Although this effect is related to the one responsible for the anomalous heat-kernel decay observed in \cite{Berger2008}, it is still a different one.
In \cite{Berger2008}, the dominating effect is that a random walk finds a trap elsewhere and then returns to its initial position. This behavior, however, has a more complex dependence on the Laplacian's eigenvalues.

\subsection{Survey on proofs for lower bounds}
\label{subsec:proofseigval}

For the lower bound of the principal Dirichlet eigenvalue we have to put in significantly more work than for the upper bound.
The key idea, however, is linked to the considerations for the shape theorem of first-passage percolation, see e.g.\ Cox and Durrett \cite{Cox1981} for the sample case $d=2$. The philosophy is that we have to show that each site in the box $B_n$ is sufficiently well reachable by conductances that are significantly greater than the lower bound candidate $g$.
Note that this is similar to the idea of Lemma 4.6 in \cite{Boukhadra2015} where the authors proved this for a polynomial tail of the conductances with parameter $\gamma$ and the candidate $g(n) = n^{-\alpha}$ with $\alpha>1/(2\gamma)$.
It turns out that a crucial element of the proof is to give a condition that implies that \Pas for $n$ large enough all sites in the box $B_n$ have at least one edge with conductance greater than $g(n)$, similar to \cite[p.\ 585]{Cox1981} and \cite[Theorem (1.7)]{Kesten1984}.
In general, if $g\colon \R_+ \to \R_+$ is monotonically decreasing and bijective, $w_1\ldots, w_{2d}$ are $2d$ independent copies of the conductance $w$, and
\begin{align}
   \mathbb{E}\left[ g^{-1}\left( \max \left\{ w_1,\ldots, w_{2d} \right\} \right)^d \right] = d\int_0^\infty u^{-1} \Lambda_g (u)\d u <\infty\, ,
   \label{equ:HomCondHeur}
\end{align}
then \Pas for $n$ large enough, all sites in the box $B_n$ have at least one incident edge with conductance greater than $g(n)$.
This together with a path argument, which we adapt from \cite{Boukhadra2015} gives the \Pas lower bound for the principal Dirichlet eigenvalue $\lambda_1^{(n)}$ (given that $g(n)$ is not asymptotically larger than $n^{-2}$).
On the other hand, if Condition \eqref{equ:HomCondHeur} is violated, then the same arguments as in Cox and Durrett \cite[p.\ 585]{Cox1981} yield that \Pas as $n$ tends to infinity, the box $B_n$ contains a $g(n)$-trap infinitely often.
We have already dealt with this case at the end of Section \ref{subsec:proofUpper}.

In what follows we give a survey on the proofs of Theorem \ref{thm:EigvalsUpperBoundNecessary} as well as Theorem \ref{thm:EigvalsLowerBound} if Condition \eqref{equ:HomCond} (or equivalently \eqref{equ:HomCondHeur}) holds. The arguments described above are made rigorous in several auxiliary lemmas, which we present in the subsequent sections.

\begin{proof}[Proof of Theorem \ref{thm:EigvalsLowerBound} if Condition \eqref{equ:HomCond} holds.]
By virtue of Lemma \ref{lem:BC:i} with $\mathfrak{A}=\{ e\in\edges_d\colon 0\in e \}$ it follows that \Pas there exists $n_1^\ast\in\mathbb{N}$ such that for all $n\geq n_1^\ast$ all sites $z\in B_n$ have an incident edge with conductance greater than $g(n)$.
Since we assumed that $\Lambda_g$ is bounded from above, it follows by virtue of Corollary \ref{cor:BC:i3} (with $m=2d$ and $\kappa=d$) that there exists $\eps>0$ such that \Pas there exists $n_2^\ast\in\mathbb{N}$ such that for all $n\geq n_2^\ast$ and for all $z\in B_{n+3d}$ the box $B_{3d} (z)$ contains at most $3d-1$ edges with conductance less than or equal to $g(n^{1-\epsilon})$.
Now we choose $\xi$ small enough such that \Pas there exists $n^\ast_3$ such that for all $n\geq n^\ast_3$ Assumptions \ref{item:SpectralGapD:Cluster}, \ref{item:SpectralGapD:SpectralGapC} and \ref{item:SpectralGapD:InjMap} of Proposition \ref{prop:SpectralGapD} are fulfilled. 
This is possible by virtue of \eqref{equ:PropIICBox} and Lemmas \ref{lem:InjectiveMap} and \ref{lem:SpectralGapPerc}.
Then the claim follows by virtue of Proposition \ref{prop:Path} with $n_k = k + \max(n_1^\ast, n^\ast_2, n^\ast_3)$.
\end{proof}

\begin{proof}[Proof of Theorem \ref{thm:EigvalsUpperBoundNecessary}]
  Let $\overline c > 1$.
  In any case of \ref{ass:g} \ref{ass:RVless} or \ref{ass:mondec}, we observe that for $n$ large enough $\overline{c} g(n) \leq g \left( \overline{c}^{-1/2} n\right)$.
  It follows that the quotient $\Lambda_{\overline{c} g(u)}/\log\log u $ is bounded as $u$ tends to infinity.
  Thus we know the following by Corollary \ref{cor:BC:i4}:
  There exists $\eps>0$ such that \Pas for $n$ large enough, there are at most $2d$ edges in any subbox $B_{3d}(z) \subset B_{n+3d}$ with conductance smaller than or equal to $\overline{c} g(n^{1-\epsilon})$.
  This implies Assumption \ref{item:SpectralGapD:Box} of Proposition \ref{prop:SpectralGapD} with $\overline{c}g$ instead of $g$.
  Now we choose $\xi$ small enough such that \Pas for $n$ large enough Assumptions \ref{item:SpectralGapD:Cluster}, \ref{item:SpectralGapD:SpectralGapC} and \ref{item:SpectralGapD:InjMap} of Proposition \ref{prop:SpectralGapD} are fulfilled.
  This is possible by virtue of \eqref{equ:PropIICBox} and Lemmas \ref{lem:InjectiveMap} and \ref{lem:SpectralGapPerc}.

  Further, Condition \eqref{equ:NotUpperCond} together with Lemma \ref{lem:BC:iii} implies that \Pas as the box size $n$ grows to infinity, there exists a random subsequence $n' = n'(\omega)$ along which each site $z\in B_{n'}$ has at least one incident edge $e$ such that $w_e > \overline{c} g(n')$.
  It follows that we can apply Proposition \ref{prop:Path} with $\overline{c} g$ instead of $g$ and obtain that there exists $C>0$ (independent of $\overline{c}$ since we have assumed \ref{ass:g} \ref{ass:RVless} or \ref{ass:mondec}) such that along the random subsequence $n_k'$ and for $k$ large enough
\begin{align*}
	\mathcal{E}^{\vec{w}} (f) \geq  C \overline{c} g(n_k') \| f \|_2^2\qquad\text{for any $f\colon\, \Z^d\to \R$ with $\supp\, f\subseteq B_{n_k'}$.}
\end{align*}
Since this holds for any $\overline c >1$, this implies the claim.
\end{proof}

\section{Borel-Cantelli arguments}
\label{sec:BC}
In this section we always assume that the dimension $d\geq 2$ and
that the conductances are i.i.d.\ with law $\Pr$.
We further let $g\colon (0,\infty)\to(0,\infty)$ be a function that decreases monotonically to zero.
Moreover, we use the following abbreviations: For $\alpha>0$ and an edge set $\mathfrak{A}\subseteq \edges_d$ we define the event
\begin{align}
	J_{\alpha} (\mathfrak{A}) = \left\{ \exists e\in \mathfrak{A}\colon w_e > \alpha \right\}\, .
\end{align}
For a set $A\subset\mathbb{Z}^d$ we define $\edges (A)$ to be the set of edges that connect a site in $A$ with a neighbor in positive axes direction (i.e., right, above, in front, etc.), i.e.,
   \begin{align*}
      \edges (A) = \left\{ \{x,y\} \in\edges_d\colon x\in A \text{ and } \exists j\in\{1,\ldots, d\} \text{ such that }y = x + \boldsymbol{e}_j\right\}\, ,
   \end{align*}
where $\{\boldsymbol{e}_j\}$ is the canonical basis of $\mathbb{Z}^d$.
For $\mathfrak{A}\subseteq \edges_d$ we write $\mathfrak{A}\circ \tau_z$ for the translation of $\mathfrak{A}$ by $z\in\mathbb{Z}^d$, i.e., for $x,y,z\in \Z^d$ with $\{ x,y \}\in\E_d$ we define $\tau_z \{ x,y \} = \{ x+z, y+z \}$.

Further, for a sequence of events $(E_n)_{n\in \N}$ we recall the definitions $\liminf_{n\to\infty} E_n = \bigcup_{n=1}^\infty \left( \bigcap_{k=n}^\infty E_n \right)$ and 
$\limsup_{n\to\infty} E_n = \bigcap_{n=1}^\infty \left( \bigcup_{k=n}^\infty E_n \right)$.

\begin{lem}\label{lem:BC:i}
If $b\in \mathbb{N}$ and $\mathfrak{A}\subseteq\edges(B_b)$ is an edge set with $|\mathfrak{A}|=m$,
then
\begin{subnumcases}{\Pr\left[ \liminf_{n\rightarrow\infty} \bigcap_{z\in B_{n+b}} J_{g(n)} (\mathfrak{A}\circ\tau_z)\right] =}
		1\, , & if $\int_0^\infty u^{d-1} \Pr \left[ w\leq g(u) \right]^m \d u <\infty\, ,$\label{equ:BC:ia}\\
		0\, , & otherwise.
	\label{equ:BC:ib}
\end{subnumcases}
I.e., if and only if the integral $\int_0^\infty u^{d-1} \Pr \left[ w\leq g(u) \right]^m \d u$ is finite, then \Pas for $n$ large enough for all sites $z\in B_{n+b}$ the edge set $\mathfrak{A}\circ\tau_z$ contains a conductance greater than $g(n)$.
Otherwise the complement of this event occurs for infinitely many $n$.
\end{lem}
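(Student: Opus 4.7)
My plan is to analyze the complement events
\[
  E_n := \bigcup_{z\in B_{n+b}}\bigl\{w_e\leq g(n)\text{ for all }e\in\mathfrak{A}\circ\tau_z\bigr\},
\]
and show $\Pr[\limsup_n E_n]=0$ in case \eqref{equ:BC:ia} and $\Pr[\limsup_n E_n]=1$ in case \eqref{equ:BC:ib}. Writing $p_n:=\Pr[w\leq g(n)]$, the map $n\mapsto p_n$ is nonincreasing because $g$ decreases, which allows me to compare $\sum_n n^{d-1}p_n^m$ with $\int_1^\infty u^{d-1}p(u)^m\,\d u$ by the usual integral test.

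For \eqref{equ:BC:ia}, the naive union bound $\Pr[E_n]\leq|B_{n+b}|\,p_n^m\asymp n^d p_n^m$ loses a factor of $n$ relative to the hypothesis $\int_0^\infty u^{d-1}p(u)^m\,\d u<\infty$, so applying the first Borel--Cantelli lemma term-by-term does not suffice. I would instead bound the tail $\Pr[\bigcup_{n\geq N}E_n]$ in one step: regrouping the double union by $z$ and observing that $\{w_e\leq g(n)\ \forall e\in\mathfrak{A}\circ\tau_z\}$ is nested decreasingly in $n$, the inner union over $n\geq N$ collapses to the single event at $n=\max\{N,|z|_\infty-b\}$. A $z$-wise union bound then gives
\[
  \Pr\bigl[\bigcup\nolimits_{n\geq N}E_n\bigr]\leq|B_{N+b}|\,p_N^m+C\sum_{n>N}n^{d-1}p_n^m.
\]
The second summand tends to $0$ as $N\to\infty$ (tail of a convergent series); for the first, I would show $N^d p_N^m\to 0$ via $\sum_{k=N}^{2N}k^{d-1}p_k^m\geq N^{d}p_{2N}^m$ together with the vanishing tail of the convergent sum, and then pass to all $N$ by monotonicity of $p$.

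For \eqref{equ:BC:ib}, the $(E_n)$ are strongly correlated, so I would apply the second Borel--Cantelli lemma to a thinned, independent subsequence. Set $L:=2b+1$, so that $\mathfrak{A}\circ\tau_z$ and $\mathfrak{A}\circ\tau_{z'}$ are edge-disjoint whenever $z-z'\in L\Z^d\setminus\{0\}$. With $n_k:=2^k$ and buffered annuli $A_k:=\{x\in\Z^d:n_{k-1}+2b<|x|_\infty\leq n_k-b\}$, the events
\[
  G_k:=\bigl\{\exists\,z\in L\Z^d\cap A_k\colon w_e\leq g(n_k)\text{ for all }e\in\mathfrak{A}\circ\tau_z\bigr\}
\]
depend only on conductances of edges within distance $b$ of $A_k$, so the $(G_k)_{k\in\N}$ are mutually independent; moreover $G_k\subseteq E_{n_k}$. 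Independence of the sublattice candidates inside $A_k$ yields
\[
  \Pr[G_k]\geq 1-\bigl(1-p_{n_k}^m\bigr)^{|L\Z^d\cap A_k|}\geq 1-\exp\bigl(-c\,n_k^d\,p_{n_k}^m\bigr).
\]
A Cauchy-condensation comparison based on monotonicity of $p$ gives $\int_1^\infty u^{d-1}p(u)^m\,\d u\asymp\sum_k 2^{kd}p_{2^k}^m$, so divergence of the integral forces $\sum_k n_k^d p_{n_k}^m=\infty$. Splitting on whether $n_k^d p_{n_k}^m$ is bounded and using $1-e^{-x}\gtrsim x\wedge 1$, I would deduce $\sum_k\Pr[G_k]=\infty$, and the second Borel--Cantelli lemma then gives $\Pr[\limsup_n E_n]\geq\Pr[G_k\text{ i.o.}]=1$.

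The principal obstacle is the factor-$n$ gap in the naive union bound of \eqref{equ:BC:ia}, which forces both the tail-union rearrangement and the auxiliary implication $\sum n^{d-1}p_n^m<\infty\Rightarrow n^d p_n^m\to 0$. The secondary delicate point, in \eqref{equ:BC:ib}, is to calibrate the annulus and sublattice spacings so that independence of $(G_k)$ is preserved while each annulus still contains $\Theta(n_k^d)$ candidate sites, which is exactly what is needed for the Cauchy condensation to transport divergence of the integral over to divergence of $\sum_k\Pr[G_k]$.
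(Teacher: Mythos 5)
Your proof is correct and follows essentially the same route as the paper: part \eqref{equ:BC:ia} rests on the same $z$-wise reindexing that converts the union into a sum comparable to $\int_0^\infty u^{d-1}\Pr[w\le g(u)]^m\,\d u$ (the paper phrases it as first Borel--Cantelli over sites $z$ with threshold $g(|z|_\infty-b)$ and then converts back, you as a direct tail-union estimate plus the auxiliary fact $N^d p_N^m\to 0$), and part \eqref{equ:BC:ib} rests on the same spatial thinning to an independent family followed by the second Borel--Cantelli lemma (the paper sums over the sublattice $(2b+1)\Z^d$ directly, you first group the thinned sites into buffered dyadic annuli). Both repackagings are sound and neither changes the substance of the argument.
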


\begin{remark}
   The result of Lemma \ref{lem:BC:i} as well as the proof are generalizations of the considerations of Cox and Durrett \cite{Cox1981} and Kesten \cite[p.\ 108]{Kesten2003} (there, $m=2d$ and $g(n) = n^{-1}$).
   For the sake of completeness, we included the proofs here.
\end{remark}

\begin{proof}[Proof of Lemma \ref{lem:BC:i}] 
For \eqref{equ:BC:ia}:
We first show that
\begin{align}
   0 = 1 - \Pr\left[ \liminf_{|z|_\infty\rightarrow\infty} J_{g(|z|_\infty-b)} (\mathfrak{A}\circ\tau_z)\right]
   = \Pr\left[ \limsup_{|z|_\infty\rightarrow\infty} \left( J_{g(|z|_\infty-b)} (\mathfrak{A}\circ\tau_z) \right)^{\rm c}\right]\, .
   \label{equ:ProofBC:i1}
\end{align}
We achieve this by applying the first Borel-Cantelli lemma, i.e.,
we have to estimate
\begin{align*}
   \sum_{z\in\mathbb{Z}^d\backslash B_b} \Pr \left[ \left( J_{g(|z|_\infty-b)} (\mathfrak{A}\circ\tau_z) \right)^{\rm c} \right]
   &= \sum_{k = b+1}^\infty \sum_{|z|_\infty = k} \Pr \left[ w\leq g(|z|_\infty-b) \right]^m\\
   &\leq 2d \sum_{k=b+1}^\infty (2k+1)^{d-1} \Pr\left[ w \leq g(k-b) \right]^m\, .
\end{align*}
Since $g(\cdot)$ is monotonically decreasing, $\Pr\left[ w \leq g(\cdot) \right]$ is monotonically decreasing as well.
Further, there exists an index $k_b$ such that $k-b \geq 2^{-1} (k+1)$ for all $k\geq k_b$.
It follows that there exists $C<\infty$ such that
\begin{align*}
\sum_{z\in\mathbb{Z}^d\backslash B_b} \Pr \left[ \left( J_{g(|z|_\infty-b)} (\mathfrak{A}\circ\tau_z) \right)^{\rm c} \right]
   &\leq C + 4^d d \sum_{k=b+1}^\infty (2^{-1}(k+1))^{d-1} \Pr\left[ w \leq g\left( 2^{-1}(k+1) \right) \right]^m\, .
\end{align*}
This implies that there exists $c<\infty$ such that the LHS is bounded from above by
$C + c \int_0^\infty u^{d-1} \Pr\left[ w\leq g(u) \right]^m \d u$,
which is finite by assumption.
The claim \eqref{equ:ProofBC:i1} follows from the first Borel-Cantelli lemma.

To arrive at the claim of the lemma, we observe that \eqref{equ:ProofBC:i1} implies that \Pas there exists $n^\ast\in\mathbb{N}$
such that for all $|z|_\infty\geq n^\ast$ the set $\mathfrak{A}\circ\tau_z$ contains at least one conductance greater than $g(|z|_\infty-b)$.
If $n>n^\ast$ and $z\in B_{n+b}\backslash B_{n^\ast}$, i.e., $|z|_\infty\in (n^\ast, n+b]$, this means that $\mathfrak{A}\circ\tau_z$ contains at least one conductance greater than $g(n)$
(recall that $g$ is monotonically decreasing).
Since $n^\ast$ is finite and $g$ decreases monotonically to zero,
it also follows that there exists a finite $n'\geq n^\ast$ such that for all edges $e\in\edges (B_{n^\ast+1})$ we have $g(n')< w_e$.
Thus, $\bigcap_{z\in B_{n+b}} J_{g(n)} (\mathfrak{A}\circ\tau_z)$ is true \Pas for $n$ large enough.

For \eqref{equ:BC:ib}:
Let $\int_0^\infty u^{d-1}\Pr\left[ w\leq g(u) \right]^m \d u = \infty$.
We want to show that this implies
\begin{align}
	\Pr\left[ \liminf_{n\rightarrow\infty} \bigcap_{z\in B_{n+b}} J_{g(n)} (\mathfrak{A}\circ\tau_z)\right] = 0\, .
	\label{lem:ProofBC:i2}
\end{align}
Let us define the set $A_b = (2b+1)\mathbb{Z}^d$.
It suffices to prove the claim \eqref{lem:ProofBC:i2} for the intersection over $z\in B_{n+b} \cap A_b$,
which in turn follows by the second Borel-Cantelli lemma if
\begin{align*}
	\sum_{z\in A_b} \Pr \left[ \left( J_{g(|z|+b)} (\mathfrak{A}\circ\tau_z) \right)^{\rm c} \right] = \infty\, ,
\end{align*}
since the events $\left\{ J_{g(|z|+b)} (\mathfrak{A}\circ\tau_z)\right\}_{z\in A_b}$ are independent.
To prove that the above sum diverges, we observe that there exists a constant $C>0$ such that
\begin{align*}
   \sum_{z\in A_b} \Pr \left[ \left( J_{g(|z|+b)} (\mathfrak{A}\circ\tau_z) \right)^{\rm c} \right]
   &\geq 2d (2b+1)^d \sum_{k=1}^\infty (2k-1)^{d-1} \Pr\left[ w \leq g((2b+1)k+b) \right]^m\\
   &\geq C \int_0^\infty u^{d-1}\Pr\left[ w\leq g(u) \right]^m \d u\, .
\end{align*}
By the assumption that $\int_0^\infty u^{d-1}\Pr\left[ w\leq g(u) \right]^m \d u = \infty$, the sum diverges.
\end{proof}

\begin{cor}[of Lemma \ref{lem:BC:i}]
\label{cor:BC:i2}
	Let $b\in\N$ and $m\leq |\edges (B_b)|$. Then the following equivalence holds:
	\begin{align}
			\int_0^\infty u^{d-1} \Pr \left[ w\leq g(u) \right]^m \d u <\infty
			~\Leftrightarrow~
			\Pr\left[ \liminf_{n\rightarrow\infty} \bigcap_{\mathfrak{A}\subseteq \edges(B_b),\atop |\mathfrak{A}|\geq m} \bigcap_{z\in B_{n+b}} J_{g(n)} (\mathfrak{A}\circ\tau_z)\right] = 1\, .
			\label{equ:Hom:m}
	\end{align}
\end{cor}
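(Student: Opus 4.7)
The plan is to derive this directly from Lemma \ref{lem:BC:i} by handling each of the two implications separately, using that $\edges(B_b)$ is a finite edge set so there are only finitely many $\mathfrak{A}\subseteq\edges(B_b)$ with $|\mathfrak{A}|\geq m$.

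For the implication ``$\Rightarrow$'', I would fix any subset $\mathfrak{A}\subseteq\edges(B_b)$ with $|\mathfrak{A}|=m'\geq m$. Since $\Pr[w\leq g(u)]\leq 1$, the bound $\Pr[w\leq g(u)]^{m'}\leq \Pr[w\leq g(u)]^{m}$ holds pointwise, so finiteness of $\int_0^\infty u^{d-1}\Pr[w\leq g(u)]^m\,\d u$ entails finiteness of $\int_0^\infty u^{d-1}\Pr[w\leq g(u)]^{m'}\,\d u$. Lemma \ref{lem:BC:i} applied to this particular $\mathfrak{A}$ then yields that $\Pr\bigl[\liminf_{n\to\infty}\bigcap_{z\in B_{n+b}}J_{g(n)}(\mathfrak{A}\circ\tau_z)\bigr]=1$. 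Taking the intersection over the finitely many such $\mathfrak{A}$ preserves the property of having probability one, and the intersection commutes with the $\liminf$ in this finite-index setting, giving the claimed event.

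For the converse implication ``$\Leftarrow$'', I would argue by contraposition. Assume the integral is infinite, and pick any fixed $\mathfrak{A}_0\subseteq\edges(B_b)$ with $|\mathfrak{A}_0|=m$ (which exists since $m\leq|\edges(B_b)|$). Because the event
\[
\bigcap_{\mathfrak{A}\subseteq\edges(B_b),\,|\mathfrak{A}|\geq m}\ \bigcap_{z\in B_{n+b}}J_{g(n)}(\mathfrak{A}\circ\tau_z)
\]
is contained in $\bigcap_{z\in B_{n+b}}J_{g(n)}(\mathfrak{A}_0\circ\tau_z)$, the same inclusion passes to the $\liminf$ in $n$. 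By case \eqref{equ:BC:ib} of Lemma \ref{lem:BC:i}, the latter liminf event has probability zero, so the former does too.

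I do not expect any real obstacle here: both directions are essentially ``Lemma \ref{lem:BC:i} plus a finite union/intersection over the bounded collection of edge subsets of $\edges(B_b)$''. The only small point to be careful about is the monotonicity argument $\Pr[w\leq g(u)]^{m'}\leq\Pr[w\leq g(u)]^{m}$ for $m'\geq m$, which legitimizes applying the lemma uniformly to all relevant $\mathfrak{A}$ from a single finite-integral hypothesis.
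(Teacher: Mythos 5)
Your proposal is correct and follows essentially the same route as the paper: for ``$\Rightarrow$'' one applies Lemma \ref{lem:BC:i} to each of the finitely many subsets $\mathfrak{A}$ (using, as you note, that the integral with exponent $m'\geq m$ is dominated by the one with exponent $m$) and intersects the resulting full-probability events, while for ``$\Leftarrow$'' one specializes to a single $\mathfrak{A}_0$ with $|\mathfrak{A}_0|=m$ and invokes the zero--one dichotomy of the lemma. No gaps.
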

\begin{proof}[Proof of Corollary \ref{cor:BC:i2}]
	For ``$\Leftarrow$'', we apply Lemma \ref{lem:BC:i} for an arbitrary $\mathfrak{A} \subseteq \edges \left( B_b \right)$ with $|\mathfrak{A}|= m$.
	For ``$\Rightarrow$'', note that
	since $B_b$ is finite, the intersection over the edge sets $\mathfrak{A}$ on the RHS of \eqref{equ:Hom:m} runs over finitely many events.
	By virtue of Lemma \ref{lem:BC:i} the claim holds for each of these events and therefore also for the finite intersection.
\end{proof}

\begin{cor}[of Corollary \ref{cor:BC:i2}]
\label{cor:BC:i3}
	Let $b,m,\kappa\in\N$ with $m < |\edges (B_b)|$.
	If $u^d \Pr \left[ w\leq g(u) \right]^m$ is bounded from above, then
\begin{align}
	\Pr\left[ \liminf_{n\rightarrow\infty} \bigcap_{\mathfrak{A}\subseteq \edges(B_b),\atop |\mathfrak{A}|\geq m+\kappa} \bigcap_{z\in B_{n+b}} J_{g(n^{1-\epsilon})} (\mathfrak{A}\circ\tau_z)\right] = 1\qquad\text{for all } \eps\in [0,\kappa(m+\kappa)^{-1})\, .
	\label{equ:Hom:m+1}
\end{align}
\end{cor}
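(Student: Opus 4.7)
My plan is to reduce the statement to Corollary \ref{cor:BC:i2} by a simple change of function. Set $h\colon (0,\infty)\to(0,\infty)$, $h(u)=g(u^{1-\eps})$. Since $g$ decreases monotonically to zero and $u\mapsto u^{1-\eps}$ is an increasing bijection of $(0,\infty)$, $h$ itself decreases monotonically to zero, so Corollary \ref{cor:BC:i2} applies to $h$ with the parameter $m+\kappa$ in place of $m$. The event on the right-hand side of \eqref{equ:Hom:m} with $h$ in place of $g$ is exactly the event in \eqref{equ:Hom:m+1}. Therefore the whole proof reduces to verifying
\begin{align*}
  \int_0^\infty u^{d-1}\Pr\left[ w\leq g(u^{1-\eps})\right]^{m+\kappa}\d u <\infty.
\end{align*}

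The integral near $0$ is harmless: the integrand is bounded by $u^{d-1}$, which is integrable at the origin since $d\geq 2$. For the tail, I use the hypothesis that $u^d\Pr[w\leq g(u)]^m$ is bounded by some constant $C<\infty$. Writing $v=u^{1-\eps}$ and taking the $1/m$-th power, this gives $\Pr[w\leq g(v)]\leq C^{1/m}v^{-d/m}$ for all $v>0$, hence
\begin{align*}
  \Pr\left[ w\leq g(u^{1-\eps})\right]^{m+\kappa}\leq C^{(m+\kappa)/m}\,u^{-d(1-\eps)(m+\kappa)/m}.
\end{align*}
Inserting this into the integrand yields
\begin{align*}
  u^{d-1}\Pr\left[ w\leq g(u^{1-\eps})\right]^{m+\kappa}\leq C'\,u^{d-1-d(1-\eps)(m+\kappa)/m},
\end{align*}
and the tail of the integral converges iff the exponent is strictly less than $-1$, i.e.\ iff $(1-\eps)(m+\kappa)/m>1$, which is exactly the condition $\eps<\kappa/(m+\kappa)$ assumed in the statement. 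Applying Corollary \ref{cor:BC:i2} then concludes the proof.

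The only genuine step is the exponent bookkeeping; I do not expect any real obstacle. The one point worth a sentence of care is that Corollary \ref{cor:BC:i2} was stated for a generic monotonically decreasing $g$, and the function $h(u)=g(u^{1-\eps})$ inherits this property, so no additional hypothesis needs to be checked and the reduction is clean.
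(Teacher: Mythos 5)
Your proposal is correct and follows essentially the same route as the paper: reduce to Corollary \ref{cor:BC:i2} applied to $h(u)=g(u^{1-\eps})$ with exponent $m+\kappa$, and verify finiteness of the integral using the boundedness of $u^d\Pr[w\leq g(u)]^m$; your pointwise bound $\Pr[w\leq g(v)]\leq C^{1/m}v^{-d/m}$ is just a repackaging of the paper's substitution-and-factorization of the integrand, and the exponent condition $\eps<\kappa(m+\kappa)^{-1}$ comes out identically.
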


\begin{proof}
We show that the integral $\int_0^\infty v^{d-1} \Pr \left[ w \leq g\left( v^{1-\eps}\right) \right]^{m+\kappa}\d v$ is finite and then we apply Corollary \ref{cor:BC:i2}.

The change of variable $v^{1-\eps}=u$ yields
\begin{align*}
  \int_0^\infty v^{d-1} \Pr \left[ w \leq g\left( v^{1-\eps}\right) \right]^{m+\kappa}\d v
  &= (1-\eps)^{-1}\int_0^\infty u^{d(1-\eps)^{-1}-1} \Pr \left[ w \leq g\left( u\right) \right]^{m+\kappa} \,\d u\, .
\end{align*}
Now we consider that
\begin{align*}
	u^{d(1-\eps)^{-1}-1} \Pr \left[ w \leq g(u) \right]^{m+\kappa}= u^{d\left((1-\eps)^{-1} -1 -\frac{\kappa}{m}\right)-1}\left( u^d \Pr \left[ w \leq g(u) \right]^m \right)^{1+\frac{\kappa}{m}}\, .
\end{align*}
Since both $u^d \Pr \left[ w\leq g(u) \right]^m$ and $\Pr \left[ w\leq g(u) \right]^m$ are bounded from above, we obtain that
\begin{align*}
  \int_0^\infty u^{d(1-\eps)^{-1}-1} \Pr \left[ w \leq g\left( u\right) \right]^{m+\kappa} \,\d u \leq \int_0^{u_1} u^{d(1-\eps)^{-1} -1} \,\d u  + C \int_{u_1}^\infty u^{d\left((1-\eps)^{-1} -1 -\frac{\kappa}{m}\right)-1}\, \d u < \infty
\end{align*}
for any $u_1\in (0,\infty)$ and a suitable $C<\infty$.
Since $\eps\in[0,1)$ and $d\geq 2$, the first integral on the RHS is finite. Further, since $\eps<\kappa(m+\kappa)^{-1}$, the second integral on the RHS is finite as well.
\end{proof}

For the next three results, we define $\Lambda_g$ as in \eqref{equ:DefLambda}.

\begin{cor}[of Corollary \ref{cor:BC:i2}]
\label{cor:BC:i4}
Let $b\geq 2$ and assume that $\Lambda_g (u)/\log\log u$ is bounded from above for $u$ large enough.
Then 
\begin{align}
	\Pr\left[ \liminf_{n\rightarrow\infty} \bigcap_{\mathfrak{A}\subseteq \edges (B_b), \atop |\mathfrak{A}| \geq 2d+1} \bigcap_{z\in B_{n+b}} J_{g\left( n^{1-\epsilon}\right)} \left(\mathfrak{A}\circ\tau_z \right)\right] = 1\qquad\text{for all }\eps\in \left(0,(2d+1)^{-1}\right)\, .
	\label{equ:BC:i4}
\end{align}
\end{cor}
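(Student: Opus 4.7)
The plan is to reduce the claim to Corollary \ref{cor:BC:i2} applied with $m = 2d+1$ and the function $g$ replaced by $\tilde g(u) := g(u^{1-\eps})$. Concretely, I would verify that under the hypothesis $\Lambda_g(u)/\log\log u \leq C$ for $u$ large enough and for $\eps \in (0,(2d+1)^{-1})$, the integrability condition
\begin{align*}
   \int_0^\infty u^{d-1} \Pr\bigl[ w \leq g(u^{1-\eps})\bigr]^{2d+1}\,\d u < \infty
\end{align*}
holds. Once this is shown, Corollary \ref{cor:BC:i2} (with $b\geq 2$, so that $|\edges(B_b)|\geq 2d+1$) immediately yields the almost-sure conclusion.

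The key calculation is a change of variables $v = u^{1-\eps}$, mirroring the argument in the proof of Corollary \ref{cor:BC:i3}. This converts the integral, up to a constant, into
\begin{align*}
   \int_0^\infty v^{(d-1+\eps)/(1-\eps)}\, \Pr[w\leq g(v)]^{2d+1}\,\d v.
\end{align*}
Next I would use the hypothesis: from $n^d \Pr[w \leq g(n)]^{2d} \leq C\log\log n$, I extract the tail bound
\begin{align*}
   \Pr[w\leq g(v)]^{2d+1} \leq C' v^{-(2d+1)/2}(\log\log v)^{(2d+1)/(2d)}
\end{align*}
for $v$ large enough. The integrand is thus dominated, for large $v$, by a constant times $v^{\alpha}(\log\log v)^{(2d+1)/(2d)}$ where
\begin{align*}
   \alpha = \frac{d-1+\eps}{1-\eps} - \frac{2d+1}{2}.
\end{align*}

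The main (but elementary) obstacle is checking that $\alpha < -1$ precisely when $\eps < (2d+1)^{-1}$, which pins down the stated range of $\eps$. A short manipulation gives $\alpha+1 < 0 \Leftrightarrow (2d+1)\eps < 1$, as required. Since slowly varying factors of $(\log\log v)^{(2d+1)/(2d)}$ do not affect convergence when $\alpha<-1$, the tail integral converges. The behavior near zero is handled as in Corollary \ref{cor:BC:i3}: the exponent $(d-1+\eps)/(1-\eps)$ is nonnegative for $d\geq 2$, so the integrand is bounded near $0$ and contributes only a finite amount. Combining these bounds yields the required integrability, and invoking Corollary \ref{cor:BC:i2} with $m=2d+1$ and $\tilde g$ completes the argument.
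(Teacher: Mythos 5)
Your proposal is correct and follows essentially the same route as the paper: both reduce to Corollary \ref{cor:BC:i2} with $m=2d+1$ and $g(u^{1-\eps})$ in place of $g$, and both verify the integrability condition by extracting the bound $\Pr[w\leq g(v)]^{2d}\leq Cv^{-d}\log\log v$ from the hypothesis on $\Lambda_g$, arriving at the same threshold $\eps<(2d+1)^{-1}$. The only cosmetic difference is that you change variables $v=u^{1-\eps}$ before estimating, whereas the paper estimates the integrand in the variable $u$ directly.
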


\begin{proof}
We show that the integral in \eqref{equ:Hom:m} is finite for $m=2d+1$ and $g(u^{1-\epsilon})$ instead of $g(u)$.
The assumption on the function $\Lambda_g$ implies that there exists $C<\infty$ such that for $u$ large enough
\begin{align*}
	\frac{u^{\left( d+\frac{1}{2}\right) (1-\epsilon)} \Pr\left[ w \leq g(u^{1-\epsilon}) \right]^{2d+1}}{\left( \log\log u^{1-\epsilon}\right)^{1 + \frac{1}{2d}}} < C\quad \text{for all $\epsilon\in (0,1)$.}
\end{align*}
It follows that for $u$ large enough
\begin{align*}
	u^d \Pr\left[ w \leq g(u^{1-\epsilon}) \right]^{2d+1} \leq C u^{-\frac{1}{2}+\epsilon\left(d+ \frac{1}{2}\right)} \left( \log\log u^{1-\epsilon}\right)^{1 + \frac{1}{2d}}\, .
\end{align*}
For all $\epsilon< (2d+1)^{-1}$, this implies that the integral
$\int_0^\infty u^{d-1} \Pr\left[ w \leq g(u^{1-\epsilon}) \right]^{2d+1} \,\d u$ is finite.
The claim follows by virtue of Corollary \ref{cor:BC:i2}.
\end{proof}

For the next lemma we need the following definition:
For $i,k\in\N$ with $i\leq k$ we define $A_{i,k}$ as the set which ``has residue class $i$ modulo $k$'', i.e.,
  \begin{align}
	A_{i,k} = \left\{ z=(z_1,\ldots,z_d)\in \mathbb{Z}^d\colon z_1+\ldots+z_d \equiv i \text{ mod } k \right\}\, .
	\label{equ:DefAik}
  \end{align}
  Note that for fixed $k$, the sets $A_{i,k}$ are disjoint and eventually the sets $B_n \cap A_{i,k}$ have cardinality greater than
  $(2n)^d/k$.
For $k=2$ we especially define the even lattice as the set
\begin{align}
	A_{\rm e} = \left\{ z\in \mathbb{Z}^d\colon |z|_1 \equiv 0 \text{ mod } 2 \right\}\, .
	\label{equ:DefA0V}
\end{align}
Accordingly, the odd lattice is $A_{\rm o} = \Z^d\backslash A_{\rm e}$.

\begin{lem}\label{lem:BC:ii}
Let $k\in \N$ with $k\geq 2$.
Further, let $\mathfrak{N} = \left\{ e\in\edges_d \colon 0\in e \right\}$.
Then the following implication is true: If there exists $\eps>0$ and $n^\ast\in\N$ such that
\begin{align*}
	\frac{\Lambda_g (n)}{\log\log n} \geq k+\eps
	\text{  for all $n\geq n^\ast$, then } \Pr\left[ \limsup_{n\rightarrow\infty}  \left( \bigcap_{z\in B_{n}\cap A} J_{g(n)} (\mathfrak{N}\circ\tau_z) \right) \right] = 0\, ,
\end{align*}
for any $A\in\{ A_{1,k},\ldots, A_{k,k} \}$, i.e., \Pas for $n$ large enough there exists a site $z_n\in B_n\cap A$ that is completely surrounded by edges with conductance less than or equal to $g(n)$.
It follows that \Pas for $n$ large enough there exist $k$ distinct sites $z_{(1,n)}, \ldots, z_{(k,n)}\in B_n$ that all fulfill $\pi_{z_{(i,n)}}\leq 2dg(n)$ ($i\leq k$).
\end{lem}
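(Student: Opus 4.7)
The plan is to fix a single residue class $A = A_{i,k}$ for some $i\in\{1,\ldots,k\}$ and show $\Pr[\limsup_{n\to\infty} E_n]=0$ where $E_n = \bigcap_{z\in B_n\cap A} J_{g(n)}(\mathfrak{N}\circ\tau_z)$ denotes the event that $B_n\cap A$ contains no $g(n)$-trap. The final assertion about $k$ distinct traps then follows by running the argument in parallel for all $i$ and picking one trap $z_{(i,n)}\in B_n\cap A_{i,k}$ per residue class; these are automatically distinct, and each satisfies $\pi_{z_{(i,n)}}\leq 2dg(n)$ since the $2d$ incident conductances are all bounded by $g(n)$.

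The first crucial step is an independence observation: because $k\geq 2$, any two distinct sites $z,z'$ of $A_{i,k}$ are non-adjacent in $\Z^d$. Indeed, lattice neighbors differ by $\pm 1$ in exactly one coordinate, so their coordinate sums differ by $\pm 1$ and belong to different residue classes modulo $k$. Consequently the edge neighborhoods $\mathfrak{N}\circ\tau_z$ and $\mathfrak{N}\circ\tau_{z'}$ are disjoint, and the collection of events $\bigl\{\max_{e\in\mathfrak{N}\circ\tau_z} w_e\leq g(n)\bigr\}_{z\in B_n\cap A}$ is mutually independent.

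The hard part is that a first Borel--Cantelli argument applied directly to $E_n$ fails: using independence together with the hypothesis $\Lambda_g(n)\geq (k+\eps)\log\log n$, one only obtains $\Pr[E_n]\leq (\log n)^{-c}$ with $c=2^d(1+\eps/k)$, and this is not summable in $n$. To bypass this, I would work along the dyadic subsequence $n_j=2^j$ and insist on a strengthened trap at $n_j$: a site $z_j\in B_{n_j}\cap A$ whose $2d$ incident conductances are all at most $g(n_{j+1})$, not merely $g(n_j)$. If such $z_j$ exists, then for every $n\in[n_j,n_{j+1}]$ one has $z_j\in B_n$ and $g(n)\geq g(n_{j+1})$, so $z_j$ remains a $g(n)$-trap and $E_n^{\rm c}$ holds. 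Hence almost-sure finiteness of the bad index set along $(n_j)$ implies that $E_n^{\rm c}$ holds for all sufficiently large $n$.

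The key estimate is then
\[
   \Pr[\text{no such } z_j]\;\leq\;\exp\bigl(-|B_{n_j}\cap A|\,\Pr[w\leq g(n_{j+1})]^{2d}\bigr).
\]
Substituting $|B_{n_j}\cap A|\geq (1-o(1))\,2^d n_j^d/k$, $\Pr[w\leq g(n_{j+1})]^{2d}=\Lambda_g(n_{j+1})/n_{j+1}^d$, and $(n_j/n_{j+1})^d=2^{-d}$, the exponent collapses to $-(1-o(1))(1+\eps/k)\log\log n_{j+1}$, yielding a bound of order $j^{-\alpha}$ with $\alpha>1$. The resulting series converges, first Borel--Cantelli delivers the strengthened trap at every $n_j$ with $j$ large, and the interpolation argument then closes the proof. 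The interpolation trick costs only a bounded factor $2^{-d}$ in the exponent while gaining the $\log\log n_{j+1}$ factor, which is exactly what is needed to push the decay past the summability threshold.
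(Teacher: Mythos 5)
Your proposal is correct and follows essentially the same route as the paper: independence of the trap events within a fixed residue class $A_{i,k}$ (since $k\geq 2$ forces non-adjacency), the bound $(1-p)^N\leq e^{-Np}$, a first Borel--Cantelli argument along the dyadic subsequence $n_j=2^j$ with the strengthened threshold $g(n_{j+1})=g(2n_j)$, and interpolation to all $n$ via monotonicity of $g$ and of the box. The paper packages the interpolation through the decreasing quantity $M_n=\inf_{x\in B_n\cap A}\sup_{e\in\mathfrak{N}\circ\tau_x}w_e$, but this is the same argument as your "the trap at $n_j$ survives for all $n\in[n_j,n_{j+1}]$" step.
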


\begin{proof}[Proof of Lemma \ref{lem:BC:ii}]
We first prove the claim for the subsequence $n_j = 2^j$ with $j\in\N$ and with $g(2n)$ instead of $g(n)$.
Then we show how to infer the claim along the whole sequence $n\in\N$.

For the first part, let $w_1,\ldots, w_{2d}$ be $2d$ independent copies of $w$.
Since $k\geq 2$, it follows that for any $\alpha>0$ and any fixed $i$ the events $\left\{ J_{\alpha} (\mathfrak{N}\circ\tau_z) \right\}_{z\in A_{i,k}}$ are independent and thus we can estimate
\begin{align*}
    \Pr&\left[ \bigcap_{z\in B_{n_j}\cap A_{i,k}} J_{g(2n_j)} (\mathfrak{N}\circ\tau_z) \right]
   = \Pr \left[ \max \{ w_1,\ldots, w_{2d} \} > g(2n_j) \right]^{\left| B_{n_j}\cap A_{i,k} \right|}\\
   &\qquad\qquad\leq \left( 1 - \Pr \left[ w \leq g(2n_j) \right]^{2d} \right)^{(2n_j)^d/k} \leq \exp\left( - \frac{1}{k} \left( 2n_j\right)^d \Pr \left[ w \leq g(2n_j) \right]^{2d} \right)\, .
\end{align*}
The assumption on $\Lambda_g$ implies that the RHS is summable along the sequence $n_j = 2^j$.
Thus, it follows directly by the Borel-Cantelli lemma that the statement of this lemma holds along the subsequence $n_j$ and with $g(2n_j)$ instead of $g(n_j)$.

To infer the claim of the lemma along the entire sequence, we define
\begin{align*}
	M_n := \inf_{x\in B_n\cap A} \sup_{e\in \mathfrak{N}\circ\tau_x} w_{e}\, ,
\end{align*}
where $A\in\{ A_{1,k},\ldots, A_{k,k} \}$.

Note that $M_n$ is monotonically decreasing in $n$.
By the first part of the proof we know that
\begin{align}
	\Pr \left[ \liminf_{j\rightarrow\infty }\frac{M_{n_j}}{g\left( 2n_j\right)} \leq 1 \right] = 1\, .
	\label{equ:LiminfSubseq}
\end{align}
For $n\in\mathbb{N}$ we now choose $j_n$ such that
\begin{align*}
	2^{j_n} \leq n \leq 2^{j_n + 1}\, .
\end{align*}
Since $g$ and $M_{(\,\cdot\,)}$ are both monotonically decreasing, this implies that
\begin{align*}
	M_{2^{j_n}} \geq M_n \quad\text{and}\quad g(n) \geq g \left( 2^{j_n+1}\right)\, .
\end{align*}
Thus, the claim follows by \eqref{equ:LiminfSubseq}.
\end{proof}

\begin{lem}\label{lem:BC:iii}
	Let $\mathfrak{N}$ be as in Lemma \ref{lem:BC:ii}.
	If the function $u\mapsto u g(u)$ decreases monotonically to zero or $g$ varies regularly at infinity with index less than $-1$ and in any case
		\begin{align}
			\lim_{u\rightarrow \infty} \frac{\Lambda_g (u)}{\log\log u} = 0\, , \quad\text{then}\quad
			\Pr\left[ \limsup_{n\rightarrow\infty}  \left( \bigcap_{z\in B_{n}} J_{cg(n)} (\mathfrak{N}\circ\tau_z) \right) \right] = 1 \quad\forall c>0\, .
			\label{equ:BC:iiilocal}
		\end{align}
\end{lem}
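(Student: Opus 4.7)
Write $T_n:=\{z\in\Z^d:\max_{e\ni z}w_e\leq cg(n)\}$ for the set of $cg(n)$-traps; then the event in \eqref{equ:BC:iiilocal} equals $\limsup_n\{T_n\cap B_n=\emptyset\}$, and the goal is to produce a deterministic subsequence along which $B_{n_k}$ is a.s.\ trap-free infinitely often. The plan is to pick $n_k$ growing super-exponentially, split $B_{n_k}$ into an inner region $B_{n_{k-1}+2}$ and an annulus $A_k:=B_{n_k}\setminus B_{n_{k-1}+2}$ (the one-layer buffer guarantees that the trap-defining edges in different $A_k$ are disjoint), and combine a first-Borel-Cantelli bound for the inner region with a second-Borel-Cantelli bound for the independent annular events.

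The key preliminary step---and the main obstacle---is to upgrade the hypothesis $\Lambda_g(u)/\log\log u\to 0$ to $\Lambda_{cg}(u)/\log\log u\to 0$ for arbitrary $c>0$. For $c\le 1$ this is automatic. For $c>1$, monotonic decrease of $u\mapsto ug(u)$ yields $g(n/c)\ge cg(n)$, whence $F(cg(n))\le F(g(n/c))$ and
\[
\Lambda_{cg}(n)\le c^d\Lambda_g(n/c),
\]
which suffices since $\log\log(n/c)\sim\log\log n$. In the regular-variation alternative $g(u)=u^\beta L(u)$ with $\beta<-1$, slow variation of $L$ lets one pick $n'\sim c^{-1/\beta}n$ with $g(n')\ge cg(n)$, yielding an analogous inequality. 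This is the only step in which either of the two regularity hypotheses on $g$ is used.

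With the reduction in hand, fix $n_k:=2^{k^2}$ (so $\log\log n_k\sim 2\log k$), set $\tilde E_k:=\{A_k\cap T_{n_k}=\emptyset\}$, and note that $\{\tilde E_k\}_k$ is independent thanks to the buffer. Using the independence of trap events on the even sublattice (as in Lemma~\ref{lem:BC:ii}) together with the FKG inequality,
\[
\Pr[\tilde E_k]\ge(1-F(cg(n_k))^{2d})^{|A_k|}\ge\exp(-C_1\Lambda_{cg}(n_k))
\]
for some $C_1=C_1(d)$. Since $\Lambda_{cg}(n_k)\le\delta\log\log n_k$ eventually for any $\delta>0$, choosing $\delta<1/(2C_1)$ gives $\Pr[\tilde E_k]\ge c'\,k^{-2C_1\delta}$ with $2C_1\delta<1$; hence $\sum_k\Pr[\tilde E_k]=\infty$, and the second Borel-Cantelli lemma yields $\tilde E_k$ infinitely often a.s. For the inner region, a union bound gives
\[
\Pr[T_{n_k}\cap B_{n_{k-1}+2}\neq\emptyset]\le C\,(n_{k-1}/n_k)^d\Lambda_{cg}(n_k)=O\bigl(\log k\cdot 2^{-(2k-1)d}\bigr),
\]
which is summable, so the first Borel-Cantelli lemma implies that a.s.\ the inner region is trap-free for all large $k$. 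Combining, $B_{n_k}=B_{n_{k-1}+2}\cup A_k$ is a.s.\ trap-free for infinitely many $k$, proving the claim.
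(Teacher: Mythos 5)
Your proof is correct and follows essentially the same strategy as the paper's: the reduction from $\Lambda_g$ to $\Lambda_{cg}$ via $cg(n)\leq g(n/\tilde c)$, a super-exponentially growing subsequence, a decomposition of $B_{n_k}$ into a buffered annulus (whose trap events are independent across $k$ and handled by FKG plus the second Borel--Cantelli lemma, with the same bound $\Pr[\tilde E_k]\geq\exp(-C\Lambda_{cg}(n_k))$) and an inner region handled by the first Borel--Cantelli lemma. The one place you genuinely diverge is the inner region: the paper controls it by invoking Lemma \ref{lem:BC:i} along the \emph{full} sequence with the auxiliary function $h(u)=u(\log u)^{1-\eps}$ and a somewhat delicate integral estimate, whereas you exploit the subsequence directly via the union bound $|B_{n_{k-1}+2}|F(cg(n_k))^{2d}=O((n_{k-1}/n_k)^d\Lambda_{cg}(n_k))$, which is summable because the volume ratio decays super-exponentially; this is a clean and valid simplification. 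One small slip: in the regular-variation case you take $n'\sim c^{-1/\beta}n$, but with $\beta<-1$ this gives $n'>n$ and $g(n')\approx c^{-1}g(n)$, the wrong direction; you want $n'\sim \tilde c^{1/\beta}n<n$ (for some $\tilde c>c$, to absorb the slowly varying factor), which then yields $\Lambda_{cg}(n)\lesssim \tilde c^{-d/\beta}\Lambda_g(n')$ and the conclusion as intended.
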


\begin{proof}
For $A\subset\mathbb{Z}^d$, a fixed $c>0$, and a fixed function $g$ let us abbreviate
\begin{align*}
	H^n_A	&= \bigcap_{z\in A} J_{cg(n)} (\mathfrak{N}\circ\tau_z)\, .
\end{align*}
Let us briefly outline the idea of the proof:
It is sufficient to show that the claim is true along the subsequence $n_j = j^j$.
First we show that
\begin{align}
	\sum_{j=1}^\infty\Pr \left[ H^{n_j}_{B_{n_j}}\right] = \infty
	\label{equ:SumHInfty}
\end{align}
which, since $H^{n_j}_{B_{n_j}}\subset H^{n_j}_{B_{n_j}\backslash B_{n_{j-1}+1}}$,
implies that $\sum_{j=1}^\infty\Pr \left[ H^{n_j}_{B_{n_j}\backslash B_{n_{j-1}+1}}\right] = \infty$.
Note that since for $i,j\in\N$ with $i\neq j$ the intersection
\begin{align*}
  \left( \bigcup_{z\in B_{n_j}\backslash B_{n_{j-1}+1}} \mathfrak{N}\circ\tau_z \right) \cap \left( \bigcup_{z\in B_{n_i}\backslash B_{n_{i-1}+1}} \mathfrak{N}\circ\tau_z \right) = \emptyset\, ,
\end{align*}
the events $\left\{ H^{n_j}_{B_{n_j}\backslash B_{n_{j-1}+1}}\right\}_{j\geq 2}$ are independent.
Thus, given \eqref{equ:SumHInfty}, we can infer by the second Borel-Cantelli lemma that
\begin{align}
	\Pr \left[ \limsup_{j\rightarrow \infty} H^{n_j}_{B_{n_j}\backslash B_{n_{j-1}+1}}\right] = 1\, .
	\label{equ:PBC:limsup}
\end{align}
Then we show that 
\begin{align}
	\Pr \left[ \liminf_{j\rightarrow \infty} H_{B_{n_{j-1}+1}}^{n_{j}}\right] = 1\, .
	\label{equ:PBC:liminf}
\end{align}
Since by definition
\begin{align*}
 H_{B_{n_j}}^{n_{j}} &= H^{n_j}_{B_{n_j}\backslash B_{n_{j-1}+1}}\cap H_{B_{n_{j-1}+1}}^{n_{j}}\, ,
\end{align*}
\eqref{equ:PBC:liminf} together with \eqref{equ:PBC:limsup} implies the claim of the lemma.

Let us start with the proof of \eqref{equ:SumHInfty}.
We note that for $A_{\rm e}$ and $A_{\rm o}$ as defined in \eqref{equ:DefA0V} the FKG-inequality implies that
\begin{align*}
	\Pr \left[ H^{n_j}_{B_{n_j}} \right] = \Pr \left[ H^{n_j}_{A_{\rm e} \cap B_{n_j}} \cap H^{n_j}_{A_{\rm o} \cap B_{n_j}}\right] \geq \Pr \left[ H^{n_j}_{A_{\rm e} \cap B_{n_j}} \right]^2\, .
\end{align*}
Then we recall that $A_{\rm e}$ was constructed such that $H^{n_j}_{A_{\rm e} \cap B_{n_j}}$ is the intersection of less than $(2n+1)^d$ i.i.d.\ subevents
$\left\{ J_{cg(n_j)} (\mathfrak{N}\circ\tau_z)\right\}_{z\in A_{\rm e} \cap B_{n_j}}$,
each with probability
\begin{align*}
	\Pr \left[ J_{cg(n_j)} (\mathfrak{N}) \right] = 1 - \Pr \left[ w\leq cg(n_j) \right]^{2d}\, .
\end{align*}
Thus for $j$ large enough, there exists $C<\infty$ such that
\begin{align}
	\Pr \left[ H^{n_j}_{B_{n_j}} \right]
		&\geq \left( 1 - \Pr \left[ w\leq cg(n_j) \right]^{2d} \right)^{2(2n_j+1)^d}\nonumber\\
		&= \left( \left( 1 - \Pr \left[ w\leq cg(n_j) \right]^{2d} \right)^{\Pr \left[ w\leq cg(n_j) \right]^{-2d}} \right)^{2(2n_j+1)^d \Pr \left[ w\leq cg(n_j) \right]^{2d}}\nonumber\\
		&\geq \exp\left( -C n_j^d \Pr \left[ w\leq cg(n_j) \right]^{2d}\right) = \exp\left( -C \Lambda_{cg} \left( n_j  \right) \right)\, .
		\label{equ:BoundPH}
\end{align}
Now we explain why the assumptions on $g$ and $\Lambda_g$ imply that the RHS of \eqref{equ:BoundPH} is not summable for any $c>0$.
If $c\leq 1$, then $\Lambda_{cg} \left( n\right) \leq \Lambda_{g} \left( n\right)$ for all $n\in\N$.
It follows that for any $\varepsilon>0$ there exists $j^\ast\in\N$ such that for all $j>j^\ast$ we have
\begin{align*}
  \Lambda_{cg} \left( n_j\right) < \varepsilon \left(\log j + \log\log j\right)< 2\varepsilon \log j\, .
\end{align*}
When we choose $\varepsilon < (2C)^{-1}$, then we see that the RHS of \eqref{equ:BoundPH} is not summable.
Let us now assume that $c>1$.
If $u\mapsto ug(u)$ decreases monotonically to zero, then $cg(n) \leq g(n/c)$ for all $n$.
If $g$ varies regularly at infinity with index less than $-1$, then for any $\tilde c>c$ and for $n$ large enough $cg(n) \leq g(n/\tilde c)$.
This implies that for $n$ large enough $\Lambda_{cg} \left( n\right) \leq {\tilde c}^d \Lambda_{g} \left( n/\tilde c\right)$.
Thus, by similar arguments as for the case $c\leq 1$, we obtain that the RHS of \eqref{equ:BoundPH} is not summable.
This concludes the argument for \eqref{equ:SumHInfty}.

Let us proceed with the proof of \eqref{equ:PBC:liminf}.
Note that for any $\epsilon>0$ we have $n_{j+1} \geq h(n_j)$ with $h(u) = u(\log u)^{1-\epsilon}$. This is because for $j$ large enough and any $\epsilon>0$ we have
\begin{align*}
  n_{j+1} = (j+1) \left(1+\frac{1}{j}\right)^j j^j \geq (j+1) n_j \geq n_j \left(\log n_j \right)^{1-\epsilon}.
\end{align*}
Thus, \eqref{equ:PBC:liminf} is a consequence of
\begin{align*}
	\Pr\left[ \liminf_{n\rightarrow\infty} \bigcap_{z\in B_{n+1}} J_{cg(h(n))} (\mathfrak{A}\circ\tau_z)\right] = 1 \quad \forall c>0\, .
\end{align*}
By virtue of Lemma \ref{lem:BC:i} we can thus verify \eqref{equ:PBC:liminf} by showing that for all $c>0$ the integral
\begin{align*}
	\int_0^\infty u^{d-1} \Pr \left[ w\leq cg(h(u)) \right]^{2d} \d u <\infty\, .
\end{align*}
To see that the integral is indeed finite, we consider the following:
There exists a constant $C<\infty$ such that
\begin{align*}
	\int_0^\infty u^{d-1} \Pr \left[ w\leq cg(h(u)) \right]^{2d} \d u
		&\leq C + \int_2^\infty u^{-1} \left( \frac{u}{h(u)}\right)^d \left( h(u)^d\, \Pr \left[ w\leq cg(h(u)) \right]^{2d} \right) \,\d u\\
		&= C + \int_2^\infty u^{-1} \left( \log u \right)^{-d (1-\epsilon)} \Lambda_{cg} (h(u)) \,\d u\, .
\end{align*}
Again, we distinguish the cases $c\leq 1$ and $c>1$.
If $c\leq 1$, then $cg\leq g$.
If $c>1$, then we observe as before that in any case we have $cg(u)\leq g(u/c)$ for $u$ large enough.
Therefore the condition on $\Lambda_g$ implies that also $\Lambda_{cg} (u)/\log\log u \to 0$ as $u$ tends to infinity.
Since $h$ diverges, it follows that there exists $u^\ast<\infty$ such that $\Lambda_{cg} (h(u))\leq \log\log h(u)$ for all $u\geq u^\ast$.
Further, since on the interval $[2,u^\ast]$ the function $\Lambda_{cg} (h(\cdot))$ is bounded, the claim follows since $\int_{u^\ast}^\infty u^{-1} \left( \log u \right)^{-d (1-\epsilon)} \log\log h(u) \,\d u$ is finite.
\end{proof}

\section{Percolation results}
\label{sec:perc}
In this section we adapt three standard percolation results that we need for the path arguments of the next section in order to establish the lower bound for the principal Dirichlet eigenvalue.

We consider the standard Bernoulli bond percolation on the graph $(\Z^d, \edges_d)$, i.e., we assume that the conductances are independent random variables with common law $\mathbb{P}$ such that an individual conductance is $1$ with probability $p$ and $0$ otherwise.
For an introduction to percolation we refer the reader to \cite{Grimmett1999}.
As in the previous section, we call $\vec{w} = (w_e)_{e\in\edges_d} \in \{ 0,1 \}^{\edges_d}$ an environment and we denote the law of the environment by $\Prp$.
If the conductance $w_e$ of an edge $e$ is equal to 1, then we call $e$ an open edge. Otherwise we call the edge $e$ closed. Given a realization $\vec{w}$ of the environment, we denote the set of open edges by $\Open\subset\edges_d$.

Consider the random graph $(\Z^d, \Open)$.
Following the terminology of Grimmet \cite {Grimmett1999}, we call the connected components of this graph \emph{open clusters} and, for $x\in\Z^d$, we write
$\C(x)$ for the open cluster that contains the site $x$.
Note that $\C (x)\subset (\Z^d, \Open)$ is a graph.
We define the clusters in this way in order to make sense of Dirichlet forms defined as in \eqref{equ:DefEperc} below.
However, when we write $|\C (x)|$, we refer to the number of sites in $\C (x)$.
Furthermore, when $\C$ is a cluster and $y$ is a site in the vertex set of $\C$, then we use the shorthand notation $y\in\C$. Similarly, if $e$ is in the edge set of $\C$, then we write $e\in\C$.

We say that a path $l=\left( x_0, \ldots, x_m \right)$ is open if and only if $\{ x_{i-1}, x_i \}\in\Open$ for all $i\in\{ 1,\ldots, m \}$.

Let $p_c(d)$ be the critical probability such that \Pas there exists an infinite open cluster $\Cinf$. This cluster is \Pas unique.
We assume that $p_c(d)< p < 1$.
Note that $\Cinf$ contains all sites $x$ that are connected to infinity through an open path \emph{as well as} all open edges that are incident to a site in $\Cinf$. We further define $\mathscr{H}$ as the complement of $\Cinf$ in $\Z^d$, i.e., we regard $\mathscr{H}$ as a set of sites.

The main object of this section is to collect results from the literature and adapt the details such that they exactly fit our needs.

\begin{lem}[{\cite[Lemma 4.2]{Boukhadra2015}}]\label{lem:PropIICBox}
  Let $\eta\in (0,1)$. Then for $p$ sufficiently close to one, there exist constants $C <\infty$ and $c>0$ such that
  \begin{align}
    \Prp \left[ |B_n \cap \Cinf| \leq \eta |B_n| \right] \leq C {\rm e}^{-c n}\quad \text{ for all $n\geq 1$.}
    \label{equ:PropIICBox}
  \end{align}
\end{lem}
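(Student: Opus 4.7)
My plan is to prove the bound by combining a Peierls-type contour argument with a block renormalization, ultimately reducing the problem to a subcritical site percolation large-deviation estimate on the renormalized lattice.

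The first ingredient is an exponential bound on the diameters of finite clusters: for $p$ close enough to $1$,
\[
  \Prp\bigl[\,0 \notin \Cinf,\; \mathrm{diam}(\C(0)) \geq k\,\bigr] \leq {\rm e}^{-c(p)\,k}\, ,\qquad c(p)\to\infty\ \text{as}\ p\to 1\, .
\]
This is the standard Peierls argument: on this event $\C(0)$ is separated from infinity by a minimal closed edge-cutset whose cardinality is at least of order $k$ (e.g.\ one can require that the positive first-coordinate direction alone be blocked, which already costs $\geq k$ closed edges), and since the number of such cutsets around the origin of size $s$ grows only exponentially in $s$, a union bound weighted by $(1-p)^{s}$ delivers the claim once $1-p$ is small enough.

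The second ingredient is a block renormalization at a fixed scale $L$: tile $B_n$ by disjoint sub-cubes $Q_1,\dots,Q_N$ of side length $L$ and call $Q_i$ \emph{good} if $|Q_i \cap \Cinf|\geq (1-\eta/2)|Q_i|$, bad otherwise. Using the diameter bound above, the event $\{x\in\Cinf\}$ is localized to a neighbourhood of $x$ of radius $k$ up to an error ${\rm e}^{-c(p)k}$, so the event $\{Q_i\text{ is bad}\}$ is, up to a small truncation error, measurable with respect to the edges within distance $L+k$ of $Q_i$; its probability is made arbitrarily small by choosing $p$ close to $1$. The Liggett--Schonmann--Stacey domination theorem then stochastically dominates the family of bad blocks by an independent Bernoulli site percolation on $L\,\Z^d$ with arbitrarily small parameter.

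To conclude, the event $\{|B_n \cap \Cinf|\leq \eta|B_n|\}$ forces a fixed positive fraction of the blocks inside $B_n$ to be bad by a simple averaging, and a Chernoff-type bound for subcritical site percolation in a box containing $\asymp (n/L)^d$ blocks gives a probability at most ${\rm e}^{-c'(n/L)^{d}}\leq C\,{\rm e}^{-cn}$, yielding the claim (in fact with a much stronger exponential rate). The main obstacle, which I would have to treat carefully, is the finite-range-of-dependence step used to invoke Liggett--Schonmann--Stacey: the cluster $\C(x)$ must be truncated at distance $k$ from $x$ and the truncation error controlled uniformly over $x \in Q_i$, for instance by summing the cluster-diameter tail bound over the $|Q_i|$ candidate sites. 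Everything else reduces to a Peierls count and a standard subcritical concentration estimate.
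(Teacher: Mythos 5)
The paper does not prove this lemma at all --- it is imported verbatim from \cite[Lemma 4.2]{Boukhadra2015} --- so your argument has to stand on its own, and as written it has a genuine gap, located exactly where you flag ``the main obstacle''. The block event $\{Q_i \text{ bad}\}$ is defined through membership in $\Cinf$ and is therefore not measurable with respect to any finite neighbourhood of $Q_i$; your proposed repair (truncate $\{x\in\Cinf\}$ to $\{x\leftrightarrow \partial B_k(x)\}$ and sum the cluster--diameter tail over the sites of $Q_i$) does not close this. The truncated event \emph{contains} $\{x\in\Cinf\}$, so a block that looks good for the truncated field may still contain few points of $\Cinf$; summing the tail bound only controls the \emph{expected} number of misclassified sites, and Markov then gives an error of order $\eta^{-1}{\rm e}^{-c(p)k}$ per block, which is not exponentially small in $n$ for fixed $k$, while taking $k\asymp n$ destroys the finite range of dependence that Liggett--Schonmann--Stacey requires. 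More decisively, your claimed conclusion (rate ${\rm e}^{-c'(n/L)^d}$, ``in fact much stronger'') is provably false, which shows the domination step cannot be valid as stated: closing the $2d(2n+1)^{d-1}$ edges joining $B_n$ to its complement has probability $(1-p)^{2d(2n+1)^{d-1}}={\rm e}^{-O(n^{d-1})}$ and on that event $B_n\cap\Cinf=\emptyset$, so every block is bad simultaneously and $\Prp\left[|B_n\cap\Cinf|\leq\eta|B_n|\right]\geq {\rm e}^{-Cn^{d-1}}$. A family of bad-block indicators dominated by a sparse independent (or finite-range) field could never all equal one with probability larger than ${\rm e}^{-c(n/L)^d}$; a single surface-order event flips all $\asymp(n/L)^d$ of them at once, so no such domination holds.

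The standard repair keeps your overall architecture but replaces the density-of-$\Cinf$ condition by a \emph{genuinely local} good-block event: the enlarged block $3Q_i$ contains a crossing cluster that covers a $(1-\eta')$ fraction of $Q_i$ and absorbs every cluster of $3Q_i$ of diameter at least $L/2$. For these events LSS does apply, adjacent good blocks glue their crossing clusters, and a good block lying on an infinite path of good blocks in the renormalized lattice certifies $|Q_i\cap\Cinf|\geq(1-\eta')|Q_i|$. One must then separately show that, outside an event of probability $C{\rm e}^{-cn}$, all but a small fraction of the blocks of $B_n$ are good \emph{and} belong to the infinite renormalized cluster of good blocks; this is where the surface-order mechanism re-enters and caps the achievable rate at ${\rm e}^{-cn^{d-1}}$, which is still at most $C{\rm e}^{-cn}$ for $d\geq2$ and hence suffices for the lemma. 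Your first ingredient (the Peierls bound on finite-cluster diameters for $p$ near $1$) is essentially sound, though the disjointness giving a cutset of at least $k$ closed edges comes from rays \emph{transverse} to the direction in which the cluster spans distance $k$, not from that direction itself.
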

The second lemma is an implication of Lemma \ref{lem:PropIICBox} above.

\begin{lem}\label{lem:InjectiveMap}
   Let $d\geq 2$ and choose $p$ such that Lemma \ref{lem:PropIICBox} holds with $\eta=\frac{1}{2}$.
   Then \Pasp for $n$ large enough there exists an injective map $\varphi_1 \colon \mathscr{H} \cap B_n \to \Cinf$ such that for any site $x\in \mathscr{H} \cap B_n$ the distance $|x-\varphi_1(x)|_1 \leq 2d(\log n)^{(d+1)}$.
\end{lem}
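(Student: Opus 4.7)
The plan is to tile $\Z^d$ by disjoint cubes of side length $r_n\leq 2(\log n)^{d+1}$ and apply Lemma \ref{lem:PropIICBox} to each tile. Choosing $r_n$ small enough keeps the displacement within the stated bound, while choosing it large enough ensures that \Pasp every tile meeting $B_n$ contains strictly more than half of its sites in $\Cinf$, leaving room to absorb all sites of $\mathscr{H}$ in that tile.

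Concretely, I would set $r_n = \lfloor 2(\log n)^{d+1} \rfloor$ and partition $\Z^d$ into disjoint translates of $\{0,\ldots,r_n-1\}^d$, indexed as $\{Q_k\}_{k\in\Z^d}$. By translation invariance of $\Prp$, Lemma \ref{lem:PropIICBox} with $\eta = 1/2$ applied to each tile gives $\Prp[|Q_k\cap\Cinf|\leq |Q_k|/2]\leq C\exp(-cr_n/2)$. At most $(2n+1)^d$ tiles can intersect $B_n$, so a union bound shows that the probability that some such tile has $|Q_k\cap\Cinf|\leq |Q_k|/2$ is at most $C(2n+1)^d\exp\bigl(-c(\log n)^{d+1}\bigr)$, which is summable in $n$ because $(\log n)^{d+1}$ dominates any constant multiple of $\log n$. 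The first Borel--Cantelli lemma then yields that \Pasp every tile $Q_k$ meeting $B_n$ satisfies $|Q_k\cap\Cinf| > |Q_k|/2 \geq |Q_k\cap\mathscr{H}|$ for all $n$ sufficiently large.

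On this good event, I would construct $\varphi_1$ tile by tile: within each $Q_k$ meeting $B_n$, fix any injection from $Q_k\cap\mathscr{H}$ into $Q_k\cap\Cinf$ (which exists by the cardinality inequality above), and paste these together across tiles. The resulting map $\varphi_1$ is globally injective because the tiles are pairwise disjoint, and for every $x\in\mathscr{H}\cap B_n$ both $x$ and $\varphi_1(x)$ lie in a common cube of side length $r_n$, so $|x-\varphi_1(x)|_1 \leq d(r_n-1)\leq 2d(\log n)^{d+1}$.

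The only real delicate point is the calibration of $r_n$: it must grow faster than $\log n$ so that the single-tile failure probability beats the polynomially many tiles in the union bound, yet stay below $2(\log n)^{d+1}$ so that $d r_n$ fits inside the displacement bound. The choice $r_n=\lfloor 2(\log n)^{d+1}\rfloor$ leaves ample slack in both directions, so rounding to integers and handling tiles that only partially intersect $B_n$ cause no difficulty. No Hall-type matching is needed since disjointness of the tiles makes the local-to-global gluing automatic.
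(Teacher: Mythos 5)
Your proposal is correct and follows essentially the same route as the paper: a disjoint tiling of $\Z^d$ by cubes of side comparable to $(\log n)^{d+1}$, a union bound over the polynomially many tiles meeting $B_n$ using Lemma \ref{lem:PropIICBox} with $\eta=\tfrac12$, Borel--Cantelli, and a tile-by-tile injection whose displacement is controlled by the tile diameter. The only difference is the cosmetic choice of tile side length ($\lfloor 2(\log n)^{d+1}\rfloor$ versus the paper's $2\lfloor(\log n)^{d+1}\rfloor+1$), which affects nothing.
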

\begin{proof}[Proof of Lemma \ref{lem:InjectiveMap}]
  The proof of this lemma follows the lines of the first paragraph of the proof of \cite[Lemma 4.7]{Boukhadra2015} but we included the proof here for completeness.
  For $z\in\Z^d$ and $m\geq 0$, we denote $B_m (z) = \left\{ x\in\Z^d\colon |x-z|_\infty\leq m \right\}$.
  Choose the percolation parameter $p$ such that \eqref{equ:PropIICBox} is fulfilled with $\eta=\frac{1}{2}$.
  Let $m=\lfloor (\log n)^{d+1} \rfloor$ and consider the disjoint partition $\mathcal{P}_m := \left\{ B_m ((2m+1)z) \right\}_{z\in\Z^d}$ of $\Z^d$.
  Then Lemma \ref{lem:PropIICBox} implies that there exist $c,C\in(0,\infty)$ such that
  \begin{align*}
    \Prp \left[ \bigcup_{B\in\mathcal{P}_m,\atop B\cap B_n \neq \emptyset} \left\{ |B \cap \Cinf| \leq \tfrac{1}{2} |B|\right\} \right]
    &\leq \sum_{B\in\mathcal{P}_m,\atop B\cap B_n \neq \emptyset} \Prp \left[ |B \cap \Cinf| \leq \tfrac{1}{2} |B|\right]\\
    &\leq C (2n+1)^d \exp\left( -c \left(\log n\right)^{d+1} \right)\, ,
  \end{align*}
  which is summable.
  By the Borel-Cantelli lemma it follows that \Pasp for $n$ large enough
  we have $|B \cap \Cinf| > |B|/2$ in any $B\in\mathcal{P}_m$ with $B\cap B_n \neq \emptyset$.
  
  Now we construct $\varphi_1$ as follows: For $x\in\mathscr{H}\cap B_n$ choose $B\in\mathcal{P}_m$ (unique) such that $x\in B$.
  Choose $\varphi_1 (x) \in B\cap \Cinf$ in an injective way - this is possible since $|\mathscr{H}\cap B| < |\Cinf\cap B|$. The $\ell_1$-distance between $x$ and $\varphi_1 (x)$ is thus smaller than or equal to $2d(\log n)^{(d+1)}$.
\end{proof}

For $f:\mathbb{Z}^d \to \mathbb{R}$ with $\| f \|_2^2 < \infty$ we define the Dirichlet-form $\mathcal{E}_{\Cinf} (f)$:
\begin{align}
   \mathcal{E}_{\Cinf} (f) = \sum_{\{ x,y \}\in\Cinf} (f(x) - f(y))^2\, ,
   \label{equ:DefEperc}
\end{align}
as well as the norm $\| f \|_{\ell^2 (\Cinf)} = \sum_{x\in\Cinf} f^2 (x)$.

In the following lemma we give a lower bound for the principal Dirichlet eigenvalue on $B_n\cap \Cinf$.
The lemma is similar to Theorem 1.3 from \cite{Mathieu2004} with the difference that $B_n\cap \Cinf$ is in general not connected and we do not include the condition that $0\in \Cinf$.

\begin{lem}\label{lem:SpectralGapPerc}
  Let $d\geq 2$ and choose $p$ such that Lemma \ref{lem:PropIICBox} holds with $\eta>\frac{1}{2}$.
  Then there exists a (deterministic) constant $c>0$ such that \Pasp for $n$ large enough and all real-valued functions $f\in\ell^2 (\Z^d)$ with $\supp\, f\subseteq B_n$ we have
  \begin{align}
  	\left\| f\right\|_{\ell^2(\Cinf)}^2 \leq cn^2 \mathcal{E}_{\Cinf} (f)\, .
  	\label{equ:SpectralGapPerc}
  \end{align}
\end{lem}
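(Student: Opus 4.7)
The plan is to adapt the path argument from the proof of Theorem~1.3 in \cite{Mathieu2004}. The crucial observation is that the Dirichlet energy $\mathcal{E}_{\Cinf}(f)$ on the right-hand side of~\eqref{equ:SpectralGapPerc} is taken over \emph{all} edges of $\Cinf$, including those leaving $B_n$, so I may freely route paths outside of $B_n$. Since $\supp f\subseteq B_n$, each $x\in\Cinf\cap B_n$ can be joined inside $\Cinf$ to some $y_x\in\Cinf\setminus B_n$ at which $f(y_x)=0$. Writing $f(x)=\sum_{\{u,v\}\in\gamma_x}\bigl(f(u)-f(v)\bigr)$ for a path $\gamma_x\subseteq\Cinf$ from $x$ to $y_x$ and applying Cauchy--Schwarz gives $f(x)^2\le|\gamma_x|\sum_{\{u,v\}\in\gamma_x}(f(u)-f(v))^2$. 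Summation then yields
\[
  \|f\|_{\ell^2(\Cinf)}^2 \;\le\; \bigl(\max_x|\gamma_x|\bigr)\bigl(\max_e N(e)\bigr)\,\mathcal{E}_{\Cinf}(f), \qquad N(e):=\#\{x : e\in\gamma_x\},
\]
so the claim reduces to constructing paths with $\max_x|\gamma_x|\le C_0 n$ and $\max_e N(e)\le C_1 n$, \Pasp for $n$ large.

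For the length bound I would invoke the Antal--Pisztora shape theorem, which in the supercritical regime asserts that the chemical distance on $\Cinf$ is comparable to the Euclidean distance at large scales. Together with the positive density of $\Cinf$ near $\del B_n$ from Lemma~\ref{lem:PropIICBox}, this produces a target $y_x$ reachable from every $x\in\Cinf\cap B_n$ by a path of length at most $C_0 n$. For the congestion bound I would follow a scaffold construction in the spirit of \cite{Mathieu2004,Boukhadra2015}: using Lemma~\ref{lem:PropIICBox} and a Peierls-type renormalization, one builds $O(n^{d-1})$ essentially disjoint axial crossings of $\Cinf$ through $B_n$; Lemma~\ref{lem:InjectiveMap} then lets each vertex of $\Cinf\cap B_n$ reach the nearest crossing through a detour of length $O((\log n)^{d+1})$ that stays inside $\Cinf$. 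Distributing $O(n^d)$ paths over $O(n^{d-1})$ scaffolds of length $O(n)$ caps the per-edge usage on the scaffolds by $O(n)$, while the detour edges are used $O(1)$ times and contribute a lower-order term.

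The main obstacle is uniform control of $\max_e N(e)$ in the presence of ``isolated'' pieces of $\Cinf\cap B_n$---fragments of $\Cinf$ which appear inside $B_n$ but are only connected to the remainder of $\Cinf$ through edges leaving $B_n$. Such pieces are consistent with the above scheme because their exit edges are included in $\mathcal{E}_\Cinf(f)$, but one has to verify that Lemma~\ref{lem:InjectiveMap} indeed links each such fragment to the scaffold through $\Cinf$-paths of the expected length. This is the technical heart of the proof and amounts to a bookkeeping repackaging of the flow estimates in \cite[Section~5]{Boukhadra2015}; once carried out, the constant $c=C_0 C_1$ is deterministic and depends only on $d$ and $p$.
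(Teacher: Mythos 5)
Your route is genuinely different from the paper's. The paper constructs no paths at all: it quotes the relative isoperimetric inequality of Berger, Biskup, Hoffman and Kozma (Theorem \ref{thm:Isoper}), upgrades it in Remark \ref{rem:Isoper} to the bound $\left|\del_{\rm E}(A|\Cinf)\right|/|A|\geq c/n$ for \emph{every} subset $A\subset\Cinf\cap B_n$ (small sets are handled by the trivial bound $\left|\del_{\rm E}(A|\Cinf)\right|\geq 1$, disconnected sets by averaging over the $\vec{w}$-connected components), and then derives \eqref{equ:SpectralGapPerc} in a few lines from Cauchy--Schwarz and the co-area formula applied to the level sets $A_t=\{x\in\Cinf\colon f^2(x)>t\}$. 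What the isoperimetric route buys is precisely that it makes the congestion bookkeeping --- which you correctly identify as the technical heart of your approach --- unnecessary.

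That congestion step is where your proposal has a real gap. The bounds $\max_x|\gamma_x|\le C_0n$ and $\max_e N(e)\le C_1n$ would indeed give the claim, and the length bound is unproblematic (Antal--Pisztora plus a union bound over the polynomially many pairs of sites). But the multiplicity bound does not follow from the flow estimates you invoke: the argument of \cite[Section 5]{Boukhadra2015}, reproduced here as Lemma \ref{lem:PathVSRW}, bounds the number of paths through a fixed edge by the volume of an $\ell^1$-ball whose radius is the maximal path length, which for paths of length $O(n)$ gives $O(n^d)$ rather than $O(n)$ --- that is exactly why the paper only ever applies that argument to detours of polylogarithmic length. The scaffold construction that would rescue the estimate is only asserted: you need $O(n^{d-1})$ disjoint open crossings distributed evenly enough that each serves only $O(n)$ starting points, and every point of $\Cinf\cap B_n$ (including points in long tentacles of the cluster) must reach its assigned crossing by a short, low-multiplicity path \emph{inside} $\Cinf$. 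Lemma \ref{lem:InjectiveMap} cannot supply the latter: it produces an injective map from the holes $\mathscr{H}\cap B_n$ into $\Cinf$ with small $\ell^1$-displacement, and the associated paths used in Proposition \ref{prop:SpectralGapD} run through the full lattice, not through $\Cinf$; it says nothing about chemical distances within the cluster. So the decisive estimate $\max_e N(e)=O(n)$ remains unproved, whereas the paper's argument is complete once Theorem \ref{thm:Isoper} is quoted.
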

The proof of this lemma is rather standard given the relative isoperimetric inequality from Theorem \ref{thm:Isoper} below (see e.g. \cite[p.\ 83]{SaloffCoste1997}) but since the details are slightly different, we include the proof for the convenience of the reader.
Let $A\subseteq \Cinf$ be a set of sites. We define the relative edge boundary of $A$ with respect to $\Cinf$ as the edge set
\begin{align*}
  \del_{\rm E} \left( A | \Cinf \right)
  = \left\{ \{ x,y \}\in\Cinf\colon x\in A \text{ and } y\in \Cinf \backslash A\right\}\, .
\end{align*}
Further, as in \cite{Berger2008}, given a percolation environment $\vec{w}$, we call the set $A\subseteq\Z^d$ $\vec{w}$-connected if every two sites in $A$ can be connected by a finite path that uses only open edges and runs only through sites in $A$.
Then we have the following theorem.

\begin{thm}[\cite{Berger2008}, Theorem A.1]
\label{thm:Isoper}
  For all $d \geq 2$ and $p > p_c (d)$, there are positive and finite constants $c_1 = c_1 (d, p)$ and $c_2 = c_2 (d, p)$
  and a \Pas finite random variable $n_0 = n_0 (\vec{w})$ such that for each $n \geq n_0$ and each $\vec{w}$-connected $A$ satisfying
  $A \subset \Cinf\cap B_n$ and $|A|\geq (c_1 \log n)^{d/(d-1)}$ we have
  \begin{align}
  \label{equ:Iso}
    \del_{\rm E} \left( A | \Cinf \right) \geq c_2 |A|^{(d-1)/d}\, .
  \end{align}
\end{thm}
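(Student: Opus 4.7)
The plan is to prove this relative isoperimetric inequality via a renormalization argument that has become standard after the works of Antal--Pisztora, Benjamini--Mossel, Mathieu--Remy, and Barlow. The strategy is to reduce the inequality on the infinite cluster to the classical isoperimetric inequality on $\Z^d$ through a coarse-graining.

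First, I would partition $\Z^d$ into disjoint cubes $\{Q_z\}_{z\in\Z^d}$ of a large but fixed side length $K=K(p)$ and declare $Q_z$ \emph{good} when (i) $\Cinf \cap Q_z$ is nonempty and its restriction to a slightly enlarged box $Q_z^\ast$ contains a unique giant open component that crosses $Q_z^\ast$ in every direction, and (ii) every pair of open paths leaving $Q_z$ to distance $K$ can be joined within $Q_z^\ast$. By Antal--Pisztora and Penrose--Pisztora, for any $p>p_c(d)$ one can choose $K$ large enough that $\Prp[Q_z \text{ is bad}]$ is arbitrarily small; since the event that $Q_z$ is bad depends only on edges in a bounded enlargement of $Q_z$, clusters of bad boxes in the renormalized lattice satisfy a Peierls-type exponential tail.

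Next, given a $\vec{w}$-connected set $A \subset \Cinf \cap B_n$, set $\widehat A = \{z : Q_z \cap A \neq \emptyset\}$. The key observation is that if a good box $Q_z$ meets both $A$ and $\Cinf \setminus A$, then the uniqueness and crossing properties force at least one edge of $\del_{\rm E}(A \mid \Cinf)$ to lie in the enlargement $Q_z^\ast$; charging such edges to $Q_z$ yields $|\del_{\rm E}(A\mid\Cinf)| \geq c(K)\cdot \#\{\text{good boundary boxes of }\widehat A\}$, with only a bounded multiplicity issue since each edge is counted by at most a fixed number of nearby boxes. The classical vertex/edge isoperimetric inequality on $\Z^d$ then gives $|\del\widehat A| \geq c|\widehat A|^{(d-1)/d}$, and since $|A|\leq K^d |\widehat A|$, the desired inequality follows \emph{provided} the good boundary boxes make up a positive fraction of $\del\widehat A$.

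The main obstacle, and the reason for the lower bound $|A|\geq (c_1\log n)^{d/(d-1)}$, is precisely to control this last "good majority" condition by dominating the bad-box corrections. The plan is to apply a Peierls/Borel--Cantelli argument to the renormalized lattice: by choosing the probability of a bad box small enough, \Pasp for $n$ sufficiently large no connected cluster of bad boxes inside $B_n$ has cardinality exceeding $c_1\log n$. Because any cluster of bad boxes touching $\del \widehat A$ contributes at most its cardinality raised to the $(d-1)/d$ power to the "missing" boundary, once $|\widehat A| \gtrsim (\log n)^{d/(d-1)}$ the good boundary boxes dominate and the inequality $|\del_{\rm E}(A\mid\Cinf)| \geq c_2|A|^{(d-1)/d}$ follows with a deterministic $c_2=c_2(d,p)$. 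The two delicate points are verifying that distinct good boundary boxes produce distinct boundary edges up to bounded multiplicity, and quantifying the Peierls estimate sharply enough that a connected bad-box cluster of size $\leq c_1 \log n$ contributes only a negligible fraction of $|\widehat A|^{(d-1)/d}$ — it is precisely the balance between the $\log n$ Peierls scale and the $|A|^{(d-1)/d}$ isoperimetric scale that dictates the hypothesis $|A|\geq (c_1\log n)^{d/(d-1)}$.
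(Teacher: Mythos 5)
First, note that the paper does not prove this statement at all: Theorem \ref{thm:Isoper} is imported verbatim from \cite{Berger2008} (Theorem A.1) and is used as a black box in the proof of Lemma \ref{lem:SpectralGapPerc}. So there is no internal proof to compare against; your outline can only be measured against the argument in the cited reference, and the renormalization strategy you describe (Antal--Pisztora/Penrose--Pisztora block events, coarse-grained isoperimetry on $\Z^d$, a Peierls bound on bad blocks producing the $(\log n)^{d/(d-1)}$ threshold) is indeed the route taken there and in its predecessors (Benjamini--Mossel, Mathieu--Remy, Barlow).

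That said, the decisive step of your sketch --- the ``good majority'' claim --- has a genuine gap. You control bad blocks only through the statement that no connected cluster of bad blocks in $B_n$ exceeds $c_1\log n$ in cardinality, and you assert that such a cluster ``contributes at most its cardinality raised to the $(d-1)/d$ power'' of missing boundary. Neither point suffices: a bad cluster of $s$ blocks meeting $\del\widehat A$ can obstruct up to $s$ (not $s^{(d-1)/d}$) boundary blocks, and since the coarse boundary satisfies only $|\del\widehat A|\gtrsim|\widehat A|^{(d-1)/d}\gtrsim\log n$, a single admissible bad cluster of size $c_1\log n$ (or a few scattered ones) could in principle cover \emph{all} of $\del\widehat A$, destroying the lower bound. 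What is actually needed is a stronger, density form of the Peierls estimate --- e.g., that \Pasp for $n$ large every $\ast$-connected set of blocks in $B_n$ of cardinality at least $C\log n$ contains at most an $\eps$-fraction of bad blocks (proved by a union bound over lattice animals, using the finite-range dependence of the block events) --- applied to a set that is guaranteed to be $\ast$-connected and of size $\gtrsim\log n$, such as the outer block boundary of $\widehat A$ (whose $\ast$-connectivity is itself a nontrivial topological fact, and requires some care with holes of $\widehat A$). Alternatively one runs the case distinction of \cite{Berger2008} on the blocks of $\widehat A$ themselves rather than on $\del\widehat A$. As written, your accounting of bad blocks does not close, and this is precisely where the hypothesis $|A|\geq(c_1\log n)^{d/(d-1)}$ has to do its work.
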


\begin{remark}
  \label{rem:Isoper}
  Let $A\subset \Cinf\cap B_n$ and $n_0, c_1, c_2$ be as in Theorem \ref{thm:Isoper}.
  If $A$ is $\vec{w}$-connected and $|A|\geq (c_1\log n)^{d/(d-1)}$, then the relative isoperimetric inequality \eqref{equ:Iso} yields
  \begin{align*}
    \frac{\left| \del_{\rm E} \left( A|\Cinf\right) \right|}{|A|} \geq \frac{c_2}{|A|^{1/d}} \geq \frac{c_2}{3n}\, ,
  \end{align*}
  where we have used that $A\subseteq B_n$ and thus $|A|^{1/d} \leq (2n+1)^d$.
  If, on the other hand, $|A|< (c_1\log n)^{d/(d-1)}$, then eventually
  \begin{align*}
    \frac{\left| \del_{\rm E} \left( A|\Cinf\right) \right|}{|A|} \geq \frac{1}{|A|} \geq \frac{1}{(c_1\log n)^{d/(d-1)}} \geq \frac{1}{n}\, .
  \end{align*}
  It follows that there exists $c>0$ such that for $n$ large enough and all $\vec{w}$-connected $A\subset \Cinf\cap B_n$ we have
  \begin{align}
    \frac{\left| \del_{\rm E} \left( A|\Cinf\right) \right|}{|A|} \geq \frac{c}{n}\, .
    \label{equ:Iso2}
  \end{align}
  If $A$ is not $\vec{w}$-connected, then similar to the arguments in \cite[Section 3.1]{Mathieu2004}, we write $A=\bigcup_i A_i$ where the $A_i$ are the $\vec{w}$-connected components of the set $A$.
  Thus,
  \begin{align*}
    \frac{\left| \del_{\rm E} \left( A|\Cinf\right) \right|}{|A|}
    \;=\; \frac{1}{|A|}\sum_i \frac{\left| \del_{\rm E} \left( A_i|\Cinf\right) \right|}{\left|A_i\right|}\cdot \left|A_i\right|
    \;\geq\; \frac{c}{n|A|}\sum_i \left|A_i\right|
    \;=\; \frac{c}{n}\, .
  \end{align*}
  It follows that \eqref{equ:Iso2} holds for all sets $A\subset \Cinf\cap B_n$.
\end{remark}

\begin{proof}[Proof of Lemma \ref{lem:SpectralGapPerc}]
  Let $n_0$ be as in Theorem \ref{thm:Isoper} and let $n\geq n_0$.
  Further let $f\colon\Z^d \to \R$ such that $\supp\, f \subseteq B_n$.
  We apply the mean value inequality and H\"older's inequality to obtain
  \begin{align}
    \sqrt{4d}\, \| f \|_{\ell^2 (\Cinf)} \sqrt{\mathcal{E}_{\Cinf} (f)}
      &\geq \sqrt{\sum_{\{ x,y \}\in\Cinf} \left( f(x) + f(y) \right)^2} \sqrt{\sum_{\{ x,y \}\in\Cinf} \left( f(x) - f(y) \right)^2}\nonumber\\
      &\geq \sum_{\{ x,y \}\in\Cinf} \left| f^2(x) - f^2(y) \right|\, .
      \label{equ:Hoelder}
  \end{align}
  Now we use a standard approach which is known as the co-area formula (see e.g. \cite[p. 83]{SaloffCoste1997}):
  \begin{align*}
    \sum_{\{ x,y \}\in\Cinf} \left| f^2(x) - f^2(y) \right|
    &= \sum_{x \in\Cinf} \sum_{~y\colon\{ x,y \}\in\Cinf, \atop f(x)\geq f(y)} \int_0^\infty \mathds{1}_{\left\{ f^2(x)>t\geq f^2(y) \right\}}\, \d t\, .
  \end{align*}
  If for $t\geq 0$ we define the set of sites $A_t = \left\{ x\in\Cinf \colon f^2(x)>t \right\}$, then we see that
  \begin{align*}
    \sum_{x \in\Cinf} \sum_{~y\colon\{ x,y \}\in\Cinf, \atop f(x)\geq f(y)} \mathds{1}_{\left\{ f(x)^2>t\geq f^2(y) \right\}}
    = \left| \del_{\rm E} \left( A_t|\Cinf\right) \right|\, .
  \end{align*}
  By virtue of Theorem \ref{thm:Isoper} and Remark \ref{rem:Isoper} it follows that there exists $c>0$ such that eventually
  \begin{align*}
    \sum_{\{ x,y \}\in\Cinf} \left| f^2(x) - f^2(y) \right|
    &\geq \frac{c}{n}\int_0^\infty |A_t|\, \d t = \frac{c}{n}\sum_{x\in\Cinf} f^2 (x)\, .
  \end{align*}
  Together with \eqref{equ:Hoelder} this implies that
  \begin{align*}
     \sqrt{\mathcal{E}_{\Cinf} (f)}
    &\geq \frac{c}{\sqrt{4d}\cdot n} \| f \|_{\ell^2(\Cinf)}\, .
  \end{align*}
\end{proof}

\section{Path argument}
\label{sec:path}

In this section we give the two Propositions \ref{prop:SpectralGapD} and \ref{prop:Path}, which transfer the knowledge we obtained by the Borel-Cantelli arguments in Section \ref{sec:BC} to lower bounds on Dirichlet energies.
In order to achieve this, Lemma \ref{lem:PathVSRW} generalizes and modifies the path argument in \cite[Lemma 4.7]{Boukhadra2015}.
Before we start, we give a definition which is crucial for the remaining part of the paper.
\begin{defn}\label{def:EwG}
	Let $\mathscr{G} = (V,\edges)$ be an undirected graph and $\boldsymbol{w} = (w_e)_{e\in\edges}$. For $f\colon V \to \mathbb{R}$, we define the Dirichlet energy on $\mathscr{G}$ as
	\begin{align}
		\mathcal{E}^{\boldsymbol{w}}_\mathscr{G} (f) = \frac{1}{2}\sum_{x\in V} \sum_{y\in V,\atop \left\{ x,y\right\}\in\edges} w_{xy} (f(x) - f(y))^2\, .
	\end{align}
\end{defn}
\begin{remark}
\label{rem:EnvPerc}
 For $\xi>0$ let us define $a_e = \mathds{1}_{\{ w_e\geq \xi\} }$ ($e\in\edges_d$). Let us call an edge $e$ open if and only if $a_e=1$ and let $\C$ be an open cluster in the environment $\vec{a} = (a_e)_{e\in\edges_d}$. Then, with reference to \eqref{equ:DefEperc}, we obtain that $\xi \mathcal{E}_\C (f) \leq \mathcal{E}^{\boldsymbol{w}}_\C (f)$ for all real-valued functions $f\in\ell^2 (\Z^d)$.
\end{remark}

Since we apply a similar argument for two slightly different situations (i.e., once for the proofs of Theorems \ref{thm:EigvalsUpperBound} and \ref{thm:EigvalsLowerBound}, see Proposition \ref{prop:Path}, and once for the proof of Theorem \ref{thm:Loc}, see Proposition \ref{prop:SpectralGapD}), we kept the conditions of the following lemma as general as necessary.
\begin{lem}
\label{lem:PathVSRW}
	Let $\mathscr{G} = (V,\edges)$ be a subgraph of $(\Z^d, \mathfrak{E}_d)$ and let $\C = (V_\C, \edges_\C)$ be a subgraph of $\mathscr{G}$.
	Assume that $\nu, L\in (0,\infty)$ and $B\subseteq V$ are such that the following conditions are fulfilled:
	\begin{enumerate}[label={(\roman*)}]
		\item\label{item:PoincareCond} There exists a constant $\mu>0$ such that for all $f\colon V \to \mathbb{R}$ with $\supp\, f \subseteq B$ the following inequality holds:
			\begin{align}
			  \mathcal{E}^{\boldsymbol{w}}_\C (f) \geq \mu \| f \|_{\ell^2(\C)}^2.
			  \label{equ:PoincareCond}
			\end{align}
		\item There exists an injective map $\varphi: B \backslash V_\C \to V_\C$ such that the following holds:
		From any $x\in B\backslash V_\C$ there exists a (self-avoiding) directed path $l(x,\varphi(x))$ to $\varphi(x)$ in $\mathscr{G}$ such that
		\begin{enumerate}
			\item\label{item:allbinl} all $e\in l(x,\varphi(x))$ fulfill $w_e>\nu$,
			\item\label{item:upperboundL} $|l(x,\varphi(x))| \leq L$.
		\end{enumerate}
	\end{enumerate}	
	Then for all $f\colon V \to \mathbb{R}$ with $\supp\, f \subseteq B$ the following holds:
					\begin{align}
						\mathcal{E}^{\boldsymbol{w}}_\mathscr{G} (f) \geq \left( (2 L)^{d+1}\nu^{-1} + 3\mu^{-1}\right) ^{-1} \| f \|_{\ell^2 (\mathscr{G})}^2\, .
					\end{align}	
\end{lem}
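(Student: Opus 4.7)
The plan is to estimate $\|f\|^2_{\ell^2(\mathscr{G})}$ from above by a constant multiple of $\mathcal{E}^{\boldsymbol{w}}_{\mathscr{G}}(f)$ and read off the constant. Since $\supp f\subseteq B$, I decompose
\[
  \|f\|^2_{\ell^2(\mathscr{G})} \;=\; \|f\|^2_{\ell^2(\C)} \;+\; \sum_{x\in B\setminus V_\C} f(x)^2.
\]
The first summand is immediately controlled by assumption \ref{item:PoincareCond}: since $\C$ is a subgraph of $\mathscr{G}$, one has $\|f\|^2_{\ell^2(\C)}\leq \mu^{-1}\mathcal{E}^{\boldsymbol{w}}_\C(f)\leq \mu^{-1}\mathcal{E}^{\boldsymbol{w}}_{\mathscr{G}}(f)$. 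The genuine work is to bound the second sum.

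For each $x\in B\setminus V_\C$ I would exploit the path supplied by condition (ii). Write $l(x,\varphi(x))=(x_0,\ldots,x_m)$ with $x_0=x$, $x_m=\varphi(x)$, and $m\leq L$. The telescoping identity
\[
  f(x) \;=\; f(\varphi(x)) \;-\; \sum_{i=1}^m\bigl(f(x_i)-f(x_{i-1})\bigr),
\]
combined with $(a-b)^2\leq 2a^2+2b^2$, Cauchy--Schwarz on the $m\leq L$ summands, and the lower bound $w_e>\nu$ on every path edge, yields
\[
  f(x)^2 \;\leq\; 2\,f(\varphi(x))^2 \;+\; 2L\nu^{-1}\sum_{e\in l(x,\varphi(x))} w_e\bigl(f(e^+)-f(e^-)\bigr)^2,
\]
where $e^{\pm}$ denote the endpoints of $e$.

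Now I sum over $x\in B\setminus V_\C$ and interchange the order of summation in the edge term. By injectivity of $\varphi$, the first piece contributes at most $2\|f\|^2_{\ell^2(\C)}$. For the edge double sum I need to bound, for each $e\in\edges$, the number of starting points $x$ whose path passes through $e$: because the path has length at most $L$ and sits inside $(\Z^d,\edges_d)$, $x$ must lie within $\ell_1$-distance $L$ of an endpoint of $e$, so this multiplicity is of order $L^d$ (bounded crudely by the $\ell_\infty$-ball volume). Calibrating this constant so that $2L$ times it equals $(2L)^{d+1}$, the double sum is at most $(2L)^{d+1}\nu^{-1}\mathcal{E}^{\boldsymbol{w}}_{\mathscr{G}}(f)\cdot(2L)^{-1}\cdot 2L = (2L)^{d+1}\nu^{-1}\mathcal{E}^{\boldsymbol{w}}_{\mathscr{G}}(f)$ after the Cauchy--Schwarz factor. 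Putting the two pieces together and applying \ref{item:PoincareCond} once more to absorb the remaining $\ell^2(\C)$-terms gives
\[
  \|f\|^2_{\ell^2(\mathscr{G})} \;\leq\; \bigl(3\mu^{-1}+(2L)^{d+1}\nu^{-1}\bigr)\mathcal{E}^{\boldsymbol{w}}_{\mathscr{G}}(f),
\]
which rearranges to the claim. The main delicate point is the multiplicity count for paths through a common edge; the factorization $(2L)^{d+1}=2L\cdot(2L)^d$ corresponds cleanly to the $2L$ from Cauchy--Schwarz over path length and the $(2L)^d$ from counting admissible starting points of a length-$L$ path in $\Z^d$ through a fixed edge. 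Nothing beyond $\mathscr{G}\subseteq(\Z^d,\edges_d)$ enters, so the argument is completely local in character.
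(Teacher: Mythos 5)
Your proposal is correct and follows essentially the same route as the paper: telescoping along the supplied path, Cauchy--Schwarz with the weight $w_e>\nu$, injectivity of $\varphi$ to absorb the $f(\varphi(x))^2$ terms into $\|f\|^2_{\ell^2(\C)}$, a multiplicity count for paths sharing an edge, and a final application of the Poincar\'e-type bound (i), yielding the same constant $3\mu^{-1}+(2L)^{d+1}\nu^{-1}$. The only point to tighten is the multiplicity count: the $\ell_\infty$-ball volume $(2L+1)^d$ is too crude to literally give $(2L)^d$, and one should instead bound the union of the two $\ell_1$-balls of radius $L-1$ around the endpoints of $e$ (each of cardinality at most $2^{d-1}L^d$), exactly as the paper does.
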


\begin{proof}[Proof of Lemma \ref{lem:PathVSRW}]
	We generalize the proof of \cite[Lemma 5.1]{Boukhadra2015}, which uses arguments from \cite[Lemma 3.4]{Boukhadra2010}.
	Let $f\colon V \to \mathbb{R}$ with $\supp\, f \subseteq B$. For the following calculation we abbreviate
	$f(y) - f(z) = \d f ((y,z))$ where $(y,z)$ is the (directed) edge from site $y$ to its neighbor $z$. For $x\in B\backslash V_\C$ we write $f(x)$ as a telescopic sum
	\begin{align*}
		f(x) = \sum_{b\in l(x,\varphi(x))} \d f(b) + f(\varphi(x))\, .
	\end{align*}
	We apply the Cauchy-Schwarz inequality and expand the terms on the RHS by the conductances:
	\begin{align*}
		f^2(x)	&\leq \frac{2|l(x,\varphi(x))|}{\nu}\sum_{b\in l(x,\varphi(x))} w_b\, (\d f(b))^2 + 2f^2(\varphi(x))\, .
	\end{align*}
	Now we sum over all $x\in B\backslash V_\C$ and use the upper bound for $|l(x,\varphi(x))|$ according to Condition  \ref{item:upperboundL}:
	\begin{align}
		\sum_{x\in B\backslash V_\C} f^2(x) &\leq \frac{2L}{\nu}\sum_{x\in B\backslash V_\C} \sum_{b\in l(x,\varphi(x))} w_b\, (\d f(b))^2 + 2\sum_{x\in B\backslash V_\C}f^2(\varphi(x))\, .
			\label{equ:SumOverAllx}
	\end{align}
	Let us look at the last term on the RHS: By definition $\varphi$ is injective and its image is in $V_\C$. This means that
	\begin{align*}
	   \sum_{x\in B\backslash V_\C}f^2(\varphi(x)) \leq \sum_{x\in V_\C} f^2 (x)\, .
	\end{align*}
	Since the path $l(x,\varphi(x))$ has a length of at most $L$, any path that uses a given edge $b$ must have started in an $\ell_1$-ball of radius $L$ around $b=:\{ b_1, b_2 \}$ with $b_1,b_2\in\Z^d$.
	Thus, if the path $l(x,\varphi(x))$ runs through the edge $b$, then
	\begin{align*}
	  x \in \left\{ z\in\Z^d \colon \| z-b_1 \|_1 \leq L-1 \right\} \cup \left\{ z\in\Z^d \colon \| z-b_2 \|_1 \leq L-1 \right\}\, .
	\end{align*}
	Since in dimension $d\geq 2$ and for $L\geq 2$, the cardinality of either one of the above $\ell_1$-balls is bounded from above\footnote{In dimension $d=2$, the cardinality of an $\ell^1$-ball with radius $R$ is $1 + 2R(R+1) < 2(R+1)^2$.
	If $V_d^{(1)} (R)$ is the cardinality of an $\ell^1$-ball with radius $R$ in dimension $d$, then one convinces oneself that
	$V_d^{(1)} (R) < 2(R+1) V_{d-1}^{(1)} (R)$.} by $2^{d-1} L^d$, it follows that the cardinality of the whole set on the above RHS is bounded from above by $(2L)^d$.
	Thus, the sum over $b\in l(x,\varphi(x))$ on the RHS in \eqref{equ:SumOverAllx} uses each edge not more than $(2L)^d$ times, whence
		\begin{align*}
		\sum_{x\in B\backslash V_\C} \sum_{b\in l(x,\varphi(x))} w_b\, (\d f(b))^2 \leq (2L)^d\mathcal{E}^{\vec{w}}_\mathscr{G} (f)\, .
	\end{align*}
	Completing the sum to all sites $x\in\mathscr{G}$ and using the comparability between $\mathcal{E}^{\boldsymbol{w}}_\mathscr{G} (f)$ and $\mathcal{E}^{\boldsymbol{w}}_\C (f)$, we obtain by virtue of Condition \ref{item:PoincareCond}:
	\begin{align*}
		\sum_{x\in V} f^2(x) &\leq \frac{(2 L)^{d+1}}{\nu}\mathcal{E}^{\boldsymbol{w}}_\mathscr{G} (f)+ 3\sum_{x\in V_\C}f^2(x)
		\leq \left( \frac{(2 L)^{d+1}}{\nu} + \frac{3}{\mu}\right) \mathcal{E}^{\boldsymbol{w}}_\mathscr{G} (f)\, .
	\end{align*}
\end{proof}

\subsection{Asymptotics of the principal Dirichlet eigenvalue}
\label{sec:ProofThmEigVals}
From the path argument in Lemma \ref{lem:PathVSRW} we can use our observations from Section \ref{sec:BC} to obtain lower bounds of the Dirichlet forms. We use similar arguments as in \cite[Lemma 5.1]{Boukhadra2015}. 

We fix $\xi>0$ such that
\begin{align}
	\Pr\left[ w > \xi \right]>p_c(d)\, .
	\label{equ:CondXi}
\end{align}
Moreover, we fix an environment $\boldsymbol{w}$ and define a new environment $\boldsymbol{a}$ by setting 
\begin{align}
	a_e = \mathds{1}_{\{ w_e > \xi\} }\qquad (e\in\edges_d)\, ,
	\label{equ:DefAE}
\end{align}
as in Remark \ref{rem:EnvPerc}.
We denote the unique infinite cluster of the environment $\boldsymbol{a}$ by $\C^\xi$ and we use the same shorthand notations as explained at the beginning of Section \ref{sec:perc}. Further we define
$\C^\xi_n$ as the restriction of $\C^\xi$ to the box $B_n$ and similarly the holes $\mathscr{H}_n^\xi$.

Additionally, we define a second percolation environment $\tilde{\boldsymbol{w}}_{g(n)}$ for $g:(0,\infty)\to (0,\infty)$ by setting
\begin{align}
	\tilde{w}_{g(n)} (e) = w_e \mathds{1}_{\{ w_e > g(n)\} } \qquad (e\in \edges_d)\, .
\end{align}
Thus, edges with conductance less than or equal to $g(n)$ are considered to be closed and all others keep their original conductance.
With this terminology we can now define the following clusters.
\begin{defn}\label{def:InDn}
  For a fixed function $g$ and a fixed $\eps>0$, let $\mathscr{D}_{n}$ be the unique infinite open cluster of $\tilde{\boldsymbol{w}}_{g(n^{1-\eps})}$.
  Regarding this cluster, we use the same shorthand notations as introduced at the beginning of Section \ref{sec:perc}.
  Furthermore, let $\mathscr{I}_n = B_n\backslash \D_n$ be the set of holes in $B_n$.
\end{defn}
\begin{defn}\label{def:sparse}
  We call a set $\mathscr{I}\subset\Z^d$ {\bf sparse} if the set $\mathscr{I}$ does not contain any neighboring sites.
  Further, a set $\mathscr{I}\subset\Z^d$ is {\bf $\mathbf{b}$-sparse} if any box $B_b(z)\subset \Z^d$ with $z\in \Z^d$ contains at most one site of the set $\mathscr{I}$.
\end{defn}
\begin{remark}
  \label{rem:b1b2sparse}
  Let $b_1<b_2$ be natural numbers.
  If a set $\mathscr{I}\subset\Z^d$ is $b_2$-sparse, it is also $b_1$-sparse and sparse.
\end{remark}

\begin{lem}\label{lem:InSparse}
  Let $b\in\N$ with $b\geq 2d$ and $g\colon (0,\infty) \mapsto (0,\infty)$ be a decreasing function.
  For a fixed environment $\vec{w}$ assume that for $n$ large enough for all $z\in B_{n+b}$ the edge set $\mathfrak{E}\left(B_b (z)\right)$ contains at most $3d-1$ edges
  with conductance less than or equal to $g(n^{1-\eps})$.
  Further, let $\D_n$ be as in Definition \ref{def:InDn} the unique infinite cluster.
  Then, for $n$ large enough the set $\mathscr{I}_n = B_n\backslash \D_n$ is $b$-sparse.
\end{lem}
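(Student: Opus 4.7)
My plan is to argue by contradiction: suppose there exist distinct $x_1, x_2 \in \mathscr{I}_n = B_n \setminus \mathscr{D}_n$ with $|x_1-x_2|_\infty \leq 2b$, so that both lie in some box $B_b(z)$ with $z \in B_{n+b}$. The goal is to exhibit at least $3d$ edges of $\mathfrak{E}(B_b(z))$ with conductance $\leq g(n^{1-\eps})$, contradicting the hypothesis. The argument naturally splits into two steps.

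\textbf{Step 1: every $x\in B_n\cap\mathscr{I}_n$ is fully surrounded by bad edges.} Let $C^b(x)$ denote the connected component of $x$ in the good subgraph of the cube graph of $B_b(x)$. If $x$ had a good incident edge then $|C^b(x)|\geq 2$, since $b\geq 1$ puts the neighbor inside $B_b(x)$. If $C^b(x)$ were strictly interior to $B_b(x)$, its $\Z^d$-edge-boundary would coincide with its boundary in the cube graph of $B_b(x)$, lie entirely in $\mathfrak{E}(B_b(x))$, and consist only of bad edges; the edge-isoperimetric inequality $|\partial_E A|\geq 2d\,|A|^{(d-1)/d}$ then yields $|\partial_E C^b(x)|\geq 2d\cdot 2^{(d-1)/d}>3d-1$ for every $d\geq 2$, contradicting the hypothesis applied at $z=x\in B_n\subseteq B_{n+b}$. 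Hence $C^b(x)$ reaches $\partial B_b(x)$, so $C^\infty(x)$ contains some $x'$ with $|x'-x|_\infty=b$, and iterating the same argument at $x'\in B_{n+b}$ extends the good cluster of $x$ without bound, forcing $x\in\mathscr{D}_n$ by uniqueness of the infinite cluster and contradicting $x\in\mathscr{I}_n$.

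\textbf{Step 2: counting bad edges around $x_1,x_2$.} Since both sites are now isolated, the $2d$ forward edges $\{x_i,x_i+e_j\}$ ($i=1,2$, $j=1,\ldots,d$) are pairwise distinct bad edges and automatically lie in $\mathfrak{E}(B_b(z))$ as soon as $x_1,x_2\in B_b(z)$. For each coordinate $k$ with $|x_{1,k}-x_{2,k}|<2b$, the feasible interval $[\max_i x_{i,k}-b,\;\min_i x_{i,k}+b-1]$ for $z_k$ is non-empty, so one can ensure both backward edges $\{x_i-e_k,x_i\}$ belong to $\mathfrak{E}(B_b(z))$; only the (at most $d$) coordinates where $|x_{1,k}-x_{2,k}|=2b$ cost exactly one backward edge each. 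Since $|x_1-x_2|_\infty\leq 2b$, the total capture is at least $2d+d=3d$ bad edges in $\mathfrak{E}(B_b(z))$, contradicting the assumed bound of $3d-1$ and completing the proof.

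The main technical obstacle is making the iteration in Step~1 rigorous, since a priori the constructed sequence $x_0=x,x_1,x_2,\ldots$ with $|x_{k+1}-x_k|_\infty=b$ could cycle inside $B_{n+b}$ without witnessing an infinite good path. The cleanest remedy I see is an extremal/Peierls argument: at each stage choose $x_{k+1}\in C^b(x_k)\cap\partial B_b(x_k)$ that is lexicographically maximal. If the iteration ever fails to extend outward, then the entire good cluster of $x$ is trapped in a bounded region whose enlargement fits inside some $B_b(z')$ with $z'\in B_{n+b}$; but then the full edge boundary of this cluster lies in $\mathfrak{E}(B_b(z'))$ and the isoperimetric inequality once again produces strictly more than $3d-1$ bad edges inside a single $\mathfrak{E}(B_b(z'))$, yielding the required contradiction.
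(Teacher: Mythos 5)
Your Step 2 is fine (up to the small point that when $x_1\sim x_2$ one backward edge duplicates a forward edge, which still leaves at least $3d$ distinct bad edges in a single $\mathfrak{E}(B_b(z))$), and it is in fact a more careful version of the paper's own counting. Your overall route also differs from the paper's in an interesting way: you analyze the \emph{good} cluster of a site of $\mathscr{I}_n$, whereas the paper analyzes the $\Z^d$-connected component of that site in the \emph{complement} $\Z^d\setminus\D_n$, whose entire edge boundary is automatically bad because $\D_n$ is the unique infinite cluster.

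The genuine gap is in Step 1, precisely in the case your approach makes hard: a finite good cluster $C\ni x$ whose diameter exceeds the box size. Your isoperimetric contradiction only works when the whole boundary of the relevant set fits into a single $\mathfrak{E}(B_b(z'))$, and your proposed remedy — ``if the iteration fails to extend outward, the entire good cluster is trapped in a region whose enlargement fits inside some $B_b(z')$'' — is simply not true: a finite cluster of diameter $100b$ is bounded but fits in no $B_b(z')$, its $\geq 2d|C|^{(d-1)/d}$ bad boundary edges are spread over many boxes, and no single box is forced by this count alone to contain more than $3d-1$ of them. (A lexicographically extremal exit point does not help either, since the cluster may re-enter every box from other directions.) In addition, the iteration leaves the range of validity of the hypothesis after two steps: the first application is at $x\in B_n$ and produces $x'\in B_{n+b}$, but the next site $x''$ lies only in $B_{n+2b}$, where the assumption on $\mathfrak{E}(B_b(\cdot))$ is no longer available. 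The paper confronts the same difficulty in its ``Case 2'' (component not contained in $B_{2d}(z_1)$) and resolves it with a topological separating-surface argument: since $\D_n$ is infinite and adjacent to the component, the closed surface of bad edges separating the two must cross the fixed box $B_{2d}(z_1)$ while enclosing $z_1$ or one of its neighbors, which forces more than $4d>3d-1$ bad edges inside the single edge set $\mathfrak{E}(B_{2d}(z_1))$. Your proof needs an analogue of this step (or a reduction to the complement component, whose boundary is entirely bad); without it, Step 1 — and hence the claim that every site of $\mathscr{I}_n$ is surrounded by $2d$ bad edges, on which Step 2 relies — is not established.
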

\begin{proof}
To show that for $n$ large enough the set $\mathscr{I}_n$ is $b$-sparse, we first show that for $n$ large enough the set $\mathscr{I}_n$ is sparse.
\begin{figure}
  \begin{subfigure}{4cm}\begin{center}\includegraphics[width=3.5cm]{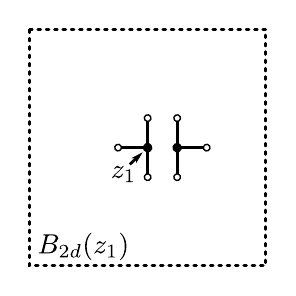}\caption{Case 1}\label{subfig:APairInIn1}\end{center}\end{subfigure}
  \begin{subfigure}{4cm}\begin{center}\includegraphics[width=3.5cm]{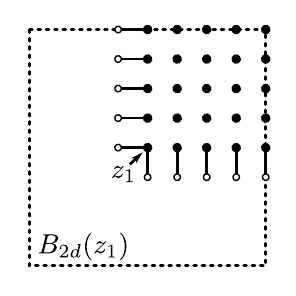}\caption{Case 2}\label{subfig:APairInIn2}\end{center}\end{subfigure}
  \begin{subfigure}{4cm}\begin{center}\includegraphics[width=3.5cm]{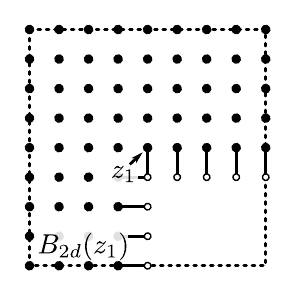}\caption{Case 2}\label{subfig:APairInIn2b}\end{center}\end{subfigure}
  \caption{Boundary edges needed to separate the set $\tilde{\mathscr{I}}_n (z_1)$ from the infinite cluster $\D_n$.
  The full circles represent sites of $\tilde{\mathscr{I}}_n$ while open circles represent sites of $\D_n$.
  In Figure \ref{subfig:APairInIn1} the component $\tilde{\mathscr{I}}_n (z_1)$ is a subset of $B_{2d}(z_1)$,
  in Figures \ref{subfig:APairInIn2} and \ref{subfig:APairInIn2b} it is not.}
  \label{fig:APairInIn}
\end{figure}
We define $\tilde{\mathscr{I}}_n = \Z^d\backslash \D_n$.
Let us assume that for infinitely many $n$ there exists a pair of neighbors $z_1, z_2$ in the set $\mathscr{I}_n = \tilde{\mathscr{I}}_n \cap B_n$.
Since by assumption $\D_n$ is the unique infinite cluster, it follows that for $n$ large enough $\D_n \cap B_n \neq \emptyset$.
Thus, we can assume without loss of generality that $z_1$ has a neighbor $x\in\D_n$.
If $z_1$ does not have a neighbor in $\D_n$, then we consider a self-avoiding path $l$ inside $B_n$ from $z_1$ to a site $x\in \D_n \cap B_n$.
Let $x'$ be the first site on the path $l$ that is in $\D_n$ and let $z_1'$ be the preceeding site to $x'$ on the path $l$.
Since $z_1$ does not have a neighbor in $\D_n$, the site $z_1'$ is different from $z_1$ and thus $z_1'$ has a further predecessor $z_2'\in\mathscr{I}_n$ on the path $l$.
It follows that the neighbors $z_1', z_2'$ are in $\mathscr{I}_n \cap B_n$ and further $z_1'$ has a neighbor $x'\in\D_n \cap B_n$. 

In the context of this proof, for $z\in\tilde{\mathscr{I}}_n$ we define $\tilde{\mathscr{I}}_n (z)\subset\tilde{\mathscr{I}}_n$ as the connected component that contains $z$, i.e., $y\in\tilde{\mathscr{I}}_n (z)$ if there exists a path $l\subset\E_d$ between the sites $z$ and $y$ that runs only through sites in $\tilde{\mathscr{I}}_n$.

Let $z_1$ be as above. We distinguish two cases now:
\begin{enumerate}
\setlength{\itemsep}{0pt}
 \item $\tilde{\mathscr{I}}_n (z_1)$ is a subset of $B_{2d} (z_1)$.
 \item $\tilde{\mathscr{I}}_n (z_1)$ is \emph{not} a subset of $B_{2d} (z_1)$.
\end{enumerate}
In the first case, the edge boundary that separates the cluster $\D_n$ and the component $\tilde{\mathscr{I}}_n (z_1)$, consists of at least $4d-2$ edges that are in the edge set $\mathfrak{E}\left(B_{2d} (z_1)\right)$.
For a sketch see Figure \ref{subfig:APairInIn1}.
This is a contradiction to the first claim of this lemma.
Since we have assumed that $\D_n$ is the infinite cluster, in the second case the edge boundary that separates the cluster $\D_n$ and the component $\tilde{\mathscr{I}}_n (z_1)$, has to link two (not necessarily different) faces of the cube $B_{2d} (z_1)$ and at the same time enclose the site $z_1$ (e.g.\ as in Figure \ref{subfig:APairInIn2}) in the very middle of the cube $B_{2d} (z_1)$ or enclose one of its neighbors (e.g.\ as in Figure \ref{subfig:APairInIn2b}).
It follows that the set $\mathfrak{E}\left(B_{2d} (z_1)\right)$ consists of more than $4d$ edges with conductance less than or equal to $g(n^{1-\eps_2})$.
This is again a contradiction and it follows that $\mathscr{I}_n$ is sparse.

Now we further show that \Pas for $n$ large enough the set $\mathscr{I}_n$ is $b$-sparse.
Let us assume that for infinitely many $n$ there exists $z\in B_{n+b}$ such that $B_b(z)$ contains two sites of $\mathscr{I}_n$.
Since we already know that \Pas for $n$ large enough the set $\mathscr{I}_n$ is sparse and each site has $2d$ incident edges, it follows that for infinitely many $n$ there exists $z\in B_{n+b}$ such that the edge set $\mathfrak{E}\left( B_{b+1}(z)\right)$ contains $4d$ edges with conductance less than or equal to $g(n^{1-\eps_2})$.
This is a contradiction to the first claim of this lemma.
\end{proof}

\begin{prop}
\label{prop:SpectralGapD}
	Fix an environment $\vec{w}\in\Omega$.
	Let $g$ be a positive function decreasing to zero and let $\epsilon, \xi, c_1 \in (0,\infty)$.
	Let $(n_k)_{k\in\mathbb{N}}$ be any (possibly empty) subsequence, along which the following assumptions are true:
	\begin{enumerate}[label={(\roman*)}]
	 \item\label{item:SpectralGapD:Box} For all $z\in B_{n_k+3d}$ the edge set $\mathfrak{E}\left(B_{3d} (z)\right)$ contains at most $3d-1$ edges with conductance less than or equal to $g(n^{1-\epsilon})$.
	 \item\label{item:SpectralGapD:Cluster} $\C^\xi$ is the unique infinite open cluster of the environment $\vec{a}$ defined through \eqref{equ:DefAE}.
	 \item\label{item:SpectralGapD:SpectralGapC} All real-valued functions $f\in\ell^2 (\Z^d)$ with $\supp\, f \subseteq B_{n}$ fulfill
	 \begin{align}
  		\| f \|_{\ell^2 (\C^\xi)}^2 \leq c_1 n_k^2 \mathcal{E}_{\C^\xi} (f) \leq \xi^{-1} c_1 n_k^2 \mathcal{E}^{\vec{w}}_{\C^\xi} (f)\, .
	 \end{align}
	 \item\label{item:SpectralGapD:InjMap} There exists an injective map $\varphi_1\colon \mathscr{H}^\xi\cap B_n \to \C^\xi$ such that for any $x\in \mathscr{H}^\xi\cap B_n$ there exists a directed path $l_1(x,\varphi_1(x))$ in $(\Z^d, \edges_d)$ from $x$ to $\varphi_1 (x)$ of length $|l_1(x,\varphi_1 (x))|\leq 2d(\log n)^{(d+1)}$.
	\end{enumerate}
	Then for $k$ large enough, $\D_{n_k}$ is the unique infinite open cluster of the environment $\tilde{\boldsymbol{w}}_{g\left(n^{1-\eps}\right)}$ (see Definition \ref{def:InDn}) and
	\begin{align*}
		\mathcal{E}_{\D_{n_k}}^{\vec{w}} \bigl( f \bigr) \geq  \left( 2^{d+1}(\log n_k)^{4d^2} g\bigl(n_k^{1-\eps}\bigr)^{-1} + 3c_1 \xi^{-1} n_k^2 \right)^{-1} \| f \|^2_{\ell^2(\D_{n_k})}\, ,
	\end{align*}
	for all real-valued functions $f\in\ell^2 (\mathbb{Z}^d)$ with $\supp\, f\subseteq B_{n_k}$ and with $\mathcal{E}_{\mathscr{D}_{n_k}}^{\vec{w}}$ as in Definition \ref{def:EwG}.
\end{prop}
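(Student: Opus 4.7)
The plan is to apply Lemma \ref{lem:PathVSRW} with $\mathscr{G} = \D_{n_k}$, subgraph $\C = \C^\xi$, support set $B = V(\D_{n_k}) \cap B_{n_k}$, conductance floor $\nu = g(n_k^{1-\eps})$, and spectral constant $\mu = \xi (c_1 n_k^2)^{-1}$ (coming from \ref{item:SpectralGapD:SpectralGapC}). Since $g$ decreases to zero, we have $g(n_k^{1-\eps}) < \xi$ for $k$ large, so every $\vec{a}$-open edge is also $\tilde{\vec{w}}_{g(n_k^{1-\eps})}$-open; hence $\C^\xi$ embeds into some infinite $\tilde{\vec{w}}_{g(n_k^{1-\eps})}$-cluster. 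Uniqueness of that cluster, and therefore well-definedness of $\D_{n_k}$, follows by a Burton--Keane argument for the supercritical Bernoulli model $\tilde{\vec{w}}_{g(n_k^{1-\eps})}$ (whose edge density exceeds $\Pr[w>\xi]>p_{\rm c}(d)$ by \eqref{equ:CondXi}), and $\C^\xi\subseteq\D_{n_k}$.

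Next, \ref{item:SpectralGapD:Box} lets us invoke Lemma \ref{lem:InSparse} with $b=3d$, giving that $\mathscr{I}_{n_k}:=B_{n_k}\setminus\D_{n_k}$ is $3d$-sparse for $k$ large. Combined with the bound of at most $3d-1$ closed edges per box $B_{3d}(z)$, this yields the following local connectivity property: there is a dimensional constant $C_d<\infty$ such that for any two neighbours $u,v\in V(\D_{n_k})\cap B_{n_k}$ there exists a path in $\D_{n_k}$ from $u$ to $v$ of length at most $C_d$, contained in $B_{3d}(u)$. Indeed, at most $3d-1$ edges and at most one vertex of $B_{3d}(u)$ are absent from $\D_{n_k}$, which in $d\geq 2$ cannot disconnect the remaining piece of $\D_{n_k}$ within that box.

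For each $x\in V(\D_{n_k})\cap B_{n_k}\cap\mathscr{H}^\xi$ we set $\varphi(x):=\varphi_1(x)$, using the injective map of \ref{item:SpectralGapD:InjMap}; this gives an injection into $V(\C^\xi)\subseteq V(\D_{n_k})$. Starting from the original path $l_1(x,\varphi_1(x))$ of length at most $2d(\log n_k)^{d+1}$ in $(\Z^d,\edges_d)$, we replace each step not available in $\D_{n_k}$ by a local detour of length at most $C_d$ from the preceding paragraph, producing a path $l(x,\varphi(x))\subseteq\D_{n_k}$ of total length at most $L:=2d\,C_d\,(\log n_k)^{d+1}$, whose edges all carry conductance strictly greater than $\nu$. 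Lemma \ref{lem:PathVSRW} then gives
\begin{align*}
\mathcal{E}^{\vec{w}}_{\D_{n_k}}(f) \;\geq\; \bigl((2L)^{d+1}\nu^{-1}+3\mu^{-1}\bigr)^{-1}\,\|f\|_{\ell^2(\D_{n_k})}^2.
\end{align*}
Since $(2L)^{d+1}=(4d\,C_d)^{d+1}(\log n_k)^{(d+1)^2}$ and $(d+1)^2<4d^2$ for $d\geq 2$, the prefactor $(4d\,C_d)^{d+1}$ is absorbed for $k$ large by the extra $(\log n_k)^{4d^2-(d+1)^2}$ factor, giving $(2L)^{d+1}\leq 2^{d+1}(\log n_k)^{4d^2}$; together with $3\mu^{-1}=3c_1\xi^{-1}n_k^2$ this yields the claimed bound.

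The main obstacle is the local-connectivity statement of the second paragraph: one must verify deterministically, from \ref{item:SpectralGapD:Box} and the $3d$-sparsity supplied by Lemma \ref{lem:InSparse}, that neighbouring $\D_{n_k}$-vertices are joined by a short $\D_{n_k}$-detour inside $B_{3d}$. This is natural in $d\geq 2$, since a defect of bounded size cannot separate such a thick box, but it requires a careful combinatorial case analysis (and in particular the constant $C_d$ must depend only on $d$, not on $n_k$ or on $\vec{w}$, so that the argument closes up into the quoted polylogarithmic length bound).
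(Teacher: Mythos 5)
Your proposal follows essentially the same route as the paper: apply Lemma \ref{lem:PathVSRW} with $\mathscr{G}=\D_{n_k}$, $\C=\C^\xi$, $\nu=g\bigl(n_k^{1-\eps}\bigr)$, $\mu=\xi(c_1 n_k^2)^{-1}$, take $\varphi=\varphi_1$ restricted to $\mathscr{H}^\xi\cap B_{n_k}\cap\D_{n_k}$, and modify $l_1$ by bounded local detours around the at most $3d-1$ bad edges per $B_{3d}$-box, with the same log-power bookkeeping at the end. One caveat: your local-connectivity step as stated only joins pairs of \emph{neighbours} both lying in $V(\D_{n_k})$, whereas $l_1$ may pass through sites of $\Z^d\setminus\D_{n_k}$; the paper's construction therefore detours from the last good site $y$ to the endpoint $z_1$ of the first good edge \emph{after} the obstruction (which need not be a neighbour of $y$), so your detour claim should be phrased for such pairs --- the detour is still contained in $\mathfrak{E}\left(B_{3d}(y)\right)$ and of length bounded by a dimensional constant, so the rest of your argument goes through unchanged.
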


\begin{prop}
\label{prop:Path}
	Let the assumptions of Proposition \ref{prop:SpectralGapD} be true for a subsequence $(n_k)_{k\in\mathbb{N}}$ as well as one of Assumptions \ref{ass:g} \ref{ass:RVless} or \ref{ass:mon}.
	Further, assume that along the same subsequence for all $z\in B_{n_k}$ there exists an incident edge with conductance greater than $g(n_k)$.
	Then there exists $c>0$ such that for $k$ large enough
	\begin{align*}
		\mathcal{E}^{\vec{w}} \bigl( f \bigr) \geq cg(n_k) \| f \|_2^2
	\end{align*}
	for all real-valued functions $f\in\ell^2(\mathbb{Z}^d)$ with $\supp\, f\subseteq B_{n_k}$.
	If one of the Assumptions \ref{ass:g} \ref{ass:RVless} or \ref{ass:mondec} is fulfilled, then the constant $c$ can be chosen independently of $g$.
\end{prop}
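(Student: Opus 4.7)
The plan is to apply Lemma \ref{lem:PathVSRW} on $\mathscr{G}=(\Z^d,\edges_d)$ with $\C=\D_{n_k}$, $B=B_{n_k}$, path length $L=1$ and threshold $\nu=g(n_k)$. Hypothesis \ref{item:PoincareCond} is supplied directly by Proposition \ref{prop:SpectralGapD}, which produces
\[
  \mu^{-1}=2^{d+1}(\log n_k)^{4d^2}g\bigl(n_k^{1-\eps}\bigr)^{-1}+3c_1\xi^{-1}n_k^2
\]
along the subsequence $(n_k)$. The lemma then delivers $\mathcal{E}^{\vec{w}}(f)\ge\bigl(2^{d+1}g(n_k)^{-1}+3\mu^{-1}\bigr)^{-1}\|f\|_2^2$, which has to be compared against $cg(n_k)\|f\|_2^2$.

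To construct the injection $\varphi\colon\mathscr{I}_{n_k}\to V_{\D_{n_k}}$ needed for hypothesis (ii) of Lemma \ref{lem:PathVSRW}, I first invoke Lemma \ref{lem:InSparse} with $b=3d$: assumption \ref{item:SpectralGapD:Box} of Proposition \ref{prop:SpectralGapD} forces $\mathscr{I}_{n_k}=B_{n_k}\setminus V_{\D_{n_k}}$ to be $3d$-sparse for $k$ large. For each $z\in\mathscr{I}_{n_k}$ the additional hypothesis of the proposition furnishes a neighbor $\varphi(z)$ with $w_{z,\varphi(z)}>g(n_k)$. The $3d$-sparseness rules out two things simultaneously: a neighbor of a hole cannot itself be a hole, so $\varphi(z)\in V_{\D_{n_k}}$; and two distinct holes $z_1,z_2$ cannot share a common neighbor $y$, since that would place them together inside $B_{3d}(y)$. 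Thus $\varphi$ is injective and each path $l(z,\varphi(z))$ is the single edge $\{z,\varphi(z)\}$ of conductance $>g(n_k)$, so hypothesis (ii) is satisfied with $L=1$ and $\nu=g(n_k)$.

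Multiplying the bracket $2^{d+1}g(n_k)^{-1}+3\mu^{-1}$ through by $g(n_k)$ reduces the proof to showing that $n_k^2g(n_k)=O(1)$ and $(\log n_k)^{4d^2}g(n_k)/g(n_k^{1-\eps})\to 0$. The first bound is immediate from either \ref{ass:g} \ref{ass:RVless} or \ref{ass:mon}, since in both cases $u^2g(u)$ converges. For the second, I set $h(u)=u^2g(u)$ and write
\[
  \frac{g(n_k^{1-\eps})}{g(n_k)}\;=\;n_k^{2\eps}\,\frac{h(n_k^{1-\eps})}{h(n_k)};
\]
the regular variation of $g$ in \ref{ass:RVless} or the monotonicity and convergence of $h$ in \ref{ass:mon} keeps $h(n_k^{1-\eps})/h(n_k)$ bounded below by a positive constant, so the whole ratio grows at least polynomially in $n_k$ and overwhelms any power of $\log n_k$. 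Under \ref{ass:RVless} or \ref{ass:mondec} one even has $n_k^2g(n_k)\to 0$; then the term $2^{d+1}g(n_k)^{-1}$ dominates the bracket and $c$ may be taken equal to $2^{-(d+1)}$, independently of $g$.

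The main technical nuisance I anticipate is the case of boundary sites $z\in\mathscr{I}_{n_k}$ with $|z|_\infty=n_k$, whose only neighbor carrying an edge of conductance exceeding $g(n_k)$ may lie outside $B_{n_k}$ in a finite cluster of the thinned environment; the remedy is to apply Lemma \ref{lem:InSparse} in its stated form on the enlarged box $B_{n_k+3d}$, where $3d$-sparseness of the holes suffices to guarantee $\varphi(z)\in V_{\D_{n_k}}$ by the same argument, the other constants being unaffected.
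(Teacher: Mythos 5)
Your proposal is correct and follows essentially the same route as the paper: apply Lemma \ref{lem:PathVSRW} with $\mathscr{G}=(\Z^d,\edges_d)$, $\C=\D_{n_k}$, $L=1$, $\nu=g(n_k)$, obtain Condition (i) from Proposition \ref{prop:SpectralGapD}, build the injective one-edge map from the $3d$-sparseness of $\mathscr{I}_{n_k}$ via Lemma \ref{lem:InSparse}, and then absorb the extra terms using $n^2g(n)=O(1)$ and $(\log n)^{4d^2}g(n)/g(n^{1-\eps})\to 0$ under the respective assumptions. The only nitpick is the final constant: since the dominated terms are $o(g(n_k)^{-1})$ but not zero, one should take $c$ strictly below $2^{-(d+1)}$ (the paper uses $(2^{d+1}+2)^{-1}$), which does not affect the statement.
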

We prove these propositions in the next section.

\subsection{Proofs of Propositions \ref{prop:SpectralGapD} and \ref{prop:Path}}

\begin{proof}[Proof of Proposition \ref{prop:SpectralGapD}]
In this proof we shortly write $n$ for a member of the subsequence $(n_k)_{k\in\mathbb{N}}$.
\begin{figure}
  \begin{subfigure}{.32\linewidth}\begin{center}\includegraphics[width=4.3cm]{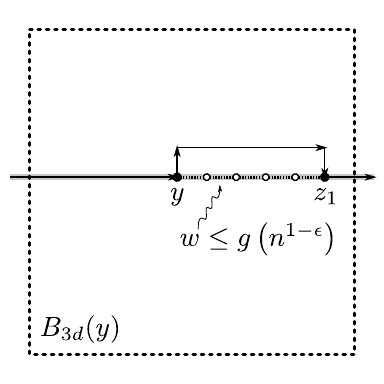}\caption{}\label{subfig:ConstrPath1}\end{center}\end{subfigure}
  \begin{subfigure}{.32\linewidth}\begin{center}\includegraphics[width=4.3cm]{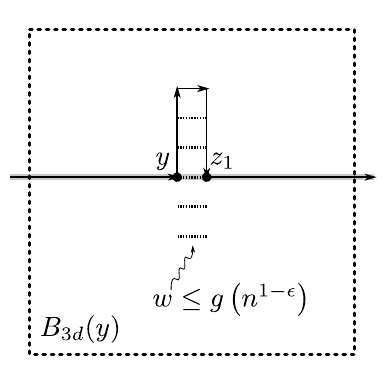}\caption{}\label{subfig:ConstrPath2}\end{center}\end{subfigure}
  \begin{subfigure}{.32\linewidth}\begin{center}\includegraphics[width=4.3cm]{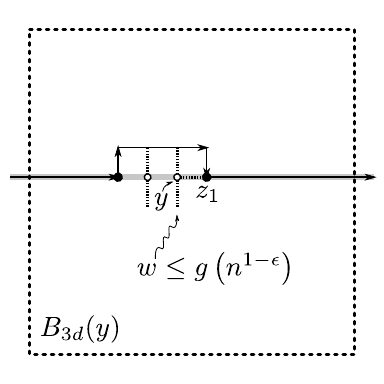}\caption{}\label{subfig:ConstrPath3}\end{center}\end{subfigure}
  \caption{Construction of the path $l(x,\varphi(x))$ (solid black line with arrows) from $l_1(x,\varphi(x))$ (thick gray line).
  Inside the box $B_{3d}(y)$ there are at most $3d-1$ bad conductances (dotted lines).
  The path $l(x,\varphi(x))$ follows $l_1(x,\varphi(x))$ until it hits an edge with a bad conductance at site $y$.
  Let $e=\left\{ z_1, z_2 \right\}$ be the first good conductance on $l_1(x,\varphi(x))$ after $y$, where the site $z_1$ comes before $z_2$ on the path  $l_1(x,\varphi(x))$.
  Then between the sites $y$ and $z_1$ the path $l(x,\varphi(x))$ takes the shortest detour from $y$ to $z_1$ without using any edge with a bad conductance.
  If, for this purpose, the path has to take a loop, as depicted in Example \ref{subfig:ConstrPath3}, then we delete the loop.}
  \label{fig:ConstrPath}
\end{figure}
The fact that for $n$ large enough there exists a unique infinite open cluster of the environment $\tilde{\boldsymbol{w}}_{g\left(n^{1-\eps}\right)}$, follows from Assumption \ref{item:SpectralGapD:Cluster} when we choose $n$ such that $g\left(n^{1-\eps}\right)\leq \xi$, i.e., when $\C^\xi \subset \mathscr{D}_n$.

For the actual claim we apply Lemma \ref{lem:PathVSRW} with $\mathscr{G}$ given by the cluster $\D_n$ and $\C$ given by $\C^\xi$.
Further, let $\nu_n = g\left(n^{1-\eps}\right)$.
Further, if we choose $\mu_n = \frac{\xi}{c_1 n^2}$ in the place of $\mu$ in \eqref{equ:PoincareCond}, then Condition \ref{item:PoincareCond} of Lemma \ref{lem:PathVSRW} is fulfilled.

We are now going to construct the map $\varphi\colon B_n\cap\D_n \cap \mathscr{H}^\xi \to \C^\xi$ and the path $l(x,\varphi(x))$.
For the next paragraph we say that a conductance is ``bad'' if it is smaller than or equal to $g(n^{1-\epsilon})$.
Let $\varphi = \varphi_1 |_{\mathscr{H}^\xi\cap B_n\cap \D_n}$ (see Assumption \ref{item:SpectralGapD:InjMap}).
By Assumption \ref{item:SpectralGapD:Box}, each subbox $B_{3d}(z)$ with $z\in B_{n+3d}$ contains at most $3d-1$ bad conductances.
We thus construct the path $l(x,\varphi(x))$ by the following algorithm:
The path $l(x,\varphi(x))$ follows $l_1(x,\varphi(x))$ until it hits an edge with a bad conductance.
Let $y$ be the last site that the path $l(x,\varphi(x))$ reached before hitting this bad conductance.
Let $e=\left\{ z_1, z_2 \right\}$ be the first good conductance on $l_1(x,\varphi(x))$ after $y$, where the site $z_1$ comes before $z_2$ on the path  $l_1(x,\varphi(x))$.
Then between the sites $y$ and $z_1$ the path $l(x,\varphi(x))$ takes the shortest detour from $y$ to $z_1$ without using any edge with a bad conductance, see Figure \ref{fig:ConstrPath} for a sketch.
This is always possible, even if $y=x$ or $z_1=\varphi(x)$, since $x,\varphi(x)\in\D_n$.
If, for the purpose of the detour, the path has to take a loop as depicted in Example \ref{subfig:ConstrPath3}, then we simply delete the entire loop.
Since the box $B_{3d}(y)$ contains only $3d-1$ bad conductances, the detour is always contained in the edge set $\mathfrak{E}\left(B_{3d}(y)\right)$.
Thus the length of each detour is bounded by a constant $C\leq|\mathfrak{E}\left(B_{3d}(0)\right)|$.
After the detour, $l(x,\varphi(x))$ continues again on $l_1(x,\varphi(x))$ until it hits the next bad conductance and so on.
For all $x\in B_n\cap\D_n \cap \mathscr{H}^\xi$ and for $n$ large enough it follows that $|l(x,\varphi(x))| \leq 2dC(\log n)^{(d+1)} < (\log n)^{2d}$.

We can now apply Lemma \ref{lem:PathVSRW} to obtain the claim.
\end{proof}
\begin{proof}[Proof of Proposition \ref{prop:Path}]
Again, we shortly write $n$ for a member of the subsequence $(n_k)_{k\in\mathbb{N}}$ and, again, we apply Lemma \ref{lem:PathVSRW}.
Let $\mathscr{G} = \left( \mathbb{Z}^d, \edges_d \right)$ and $\nu_n = g(n)$.
Further, let $\mathscr{D}_n$ be as in Proposition \ref{prop:SpectralGapD} and let $\C$ be given by $\mathscr{D}_n$.
Then Condition \ref{item:PoincareCond} of Lemma \ref{lem:PathVSRW} is fulfilled with
\begin{align*}
  \mu_n = \left( 2^{d+1}(\log n)^{4d^2} g\bigl(n^{1-\eps}\bigr)^{-1} + 3c_1 n^2 \xi^{-1}\right)^{-1}\, .
\end{align*}
By virtue of Assumption \ref{item:SpectralGapD:Box} of \ref{prop:SpectralGapD} we can apply Lemma \ref{lem:InSparse} and thus each site $x\in\mathscr{I}_n = B_n \backslash \mathscr{D}_n$ has only neighbors in $\D_n$.
By assumption there exists a neighbor $\varphi(x)$ of $x$ such that the conductance $w_{x,\varphi(x)} > g(n)$.
Further, since $\mathscr{I}_n$ is $3d$-sparse, any neighbor $y$ of $x\in\mathscr{I}_n$ has no second neighbor in $\mathscr{I}_n$.
It follows that the map $\varphi\colon\, \mathscr{I}_n\to\mathscr{D}_n$ is injective and the path $l(x,\varphi(x))=(x,\varphi(x))$ fulfills the requirements of Lemma \ref{lem:PathVSRW}.

It follows that for $n$ large enough
\begin{align*}
	\mathcal{E}^{\vec{w}} \bigl( f\bigr)  \geq  \left( 2^{d+1}g(n)^{-1} + 2^{d+3}(\log n)^{4d^2} g\bigl(n^{1-\eps}\bigr)^{-1} + 9c_1 n^2 \xi^{-1}\right)^{-1} \| f \|^2_2
\end{align*}
for all $f\colon \Z^d \to \mathbb{R}$ with $\supp\, f \subseteq B_{n}$:

We have assumed that one of Assumptions \ref{ass:g} \ref{ass:RVless} or \ref{ass:mon} is fulfilled.
Let us first assume that Assumption \ref{ass:g} \ref{ass:mon} is true and that the limit of $u^2 g(u)$ is smaller than $c_2\in (0,\infty)$.
It follows that eventually
\begin{align*}
  9c_1 \xi^{-1} n^2 g(n) < 9c_1 c_2 \xi^{-1} \quad\text{and }2^{d+3}(\log n)^{4d^2} \frac{g(n)}{g\bigl(n^{1-\eps}\bigr)} < 2^{d+5}(\log n)^{4d^2} n^{-2\eps} <1\, ,
\end{align*}
and therefore for $n$ large enough
\begin{align*}
	\mathcal{E}^{\vec{w}} \bigl( f\bigr)  \geq  \frac{g(n)}{1+ 2^{d+1} + 9c_1 c_2 \xi^{-1}} \| f \|^2_2\qquad (\supp f\subseteq B_{n})\, .
\end{align*}
If we assume that Assumption \ref{ass:g} \ref{ass:mondec} is fulfilled, then eventually even $9c_1 \xi^{-1} n^2 g(n) < 1$ and thus the lower bound becomes independent of $c_1, c_2$, and $\xi$.

Let us now assume that Assumption \ref{ass:RVless} is true.
Then there exists $\rho<-2$ such that we can write $g(n) = n^\rho L(n)$ where $L$ varies slowly at infinity.
It follows that eventually
\begin{align*}
  9c_1 \xi^{-1} n^2 g(n) < 1\text{ and } 2^{d+3} (\log n)^{4d^2} \frac{g(n)}{g\bigl(n^{1-\eps}\bigr)} = n^{\rho \eps}\,\, \frac{2^{d+3}(\log n)^{4d^2} L(n)}{L(n^{1-\eps})} <1\, .
\end{align*}
It follows that in this case for $n$ large enough
\begin{align*}
	\mathcal{E}^{\vec{w}} \bigl( f\bigr)  \geq  \frac{g(n)}{2^{d+1} + 2}\, \| f \|^2_2\, .
\end{align*}
\end{proof}

\section{Localization of the principal eigenvector}
\label{sec:ProofThmLoc}
For the proof of Theorem \ref{thm:Loc} we need to analyse the extreme value statistics of the dependent field of random variables $(\pi_z)_{z\in B_n}$.
Heuristically speaking, since the smallest values of $(\pi_z)_{z\in B_n}$ are far apart (see e.g.\ Lemma \ref{lem:Loc:In}), they are asymptotically independent.
In order to make this argument rigorous (see e.g.\ Lemma \ref{lem:QuotOrder}), we first introduce a decomposition of the lattice $\Z^d$.
Then we continue with a number of auxiliary lemmas in Section \ref{subsec:auxlem} and the extreme value analysis in Section \ref{subsec:Extreme}.
Finally, we give the proof of Theorem \ref{thm:Loc} in Section \ref{subsec:ProofLoc}.
In Section \ref{subsec:ProofCorLoc}, we prove Corollaries \ref{cor:Loc} and \ref{cor:ConvInLaw} .

\subsection{Decomposition of the lattice}
\label{subsec:decomp}
To reduce the number of indices, we fix $k\in\N$ throughout this section, i.e., although most of the quantities discussed in this section depend on some $k\in\N$, it will not show as an index.
For the proof of Theorem \ref{thm:Loc} in Section \ref{subsec:ProofLoc} we only need the case $k=1$.
However, mainly for the purpose of a future paper about the localization of the first $k$ eigenvectors, we define and state everything for general $k\in\N$.

We define the cube $\mathscr{N}\subset \Z^d$ as 
\begin{align*}
  \mathscr{N} := \left\{ 1,\ldots,2(k+1)\right\}^d\, ,
\end{align*}
and the vertex set $\mathscr{V}$ as
\begin{align*}
  \mathscr{V} := \mathop{\dot{\bigcup}}_{x\in (2k+3)\Z^d} \mathscr{N}\circ \tau_x\, ,
\end{align*}
where $\tau_x$ denotes the spatial shift by $x\in\Z^d$.

The important two features of the set $\mathscr{V}$ are that first, for all $a,b\geq 0$ and all $x,y\in (2k+3)\Z^d$ with $x\neq y$, we see that
\begin{align}
  \Pr \left[\min_{z\in \mathscr{N}\circ \tau_x} \pi_z \leq a, \min_{z\in \mathscr{N}\circ \tau_y} \pi_z \leq b \right]
  = \Pr \left[\min_{z\in \mathscr{N}\circ \tau_x} \pi_z \leq a\right]\, \Pr \left[\min_{z\in \mathscr{N}\circ \tau_y} \pi_z \leq b\right]\, ,
\end{align}
and second, the following lemma.

\begin{lem}\label{lem:decompV}
For any vertex set $\mathscr{A}\subset\Z^d$ with cardinality $|\mathscr{A}|\leq 2(k+1)$ there exists $x\in B_{k+1}=\{ -k-1,-k,\ldots,k,k+1 \}^d$ such that $\mathscr{A}\subset \mathscr{V} \circ \tau_x$.
\end{lem}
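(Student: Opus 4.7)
The plan is to rewrite the containment as a purely coordinate-wise residue condition and then apply the pigeonhole principle in each coordinate independently. First I will reformulate: $\mathscr{A}\subset\mathscr{V}\circ\tau_x$ is equivalent to $\mathscr{A}-x\subset\mathscr{V}$. Next I will characterize $\mathscr{V}$ coordinate-wise. Since $\mathscr{N}=\{1,\ldots,2(k+1)\}^d$ and the translation lattice is $(2k+3)\Z^d$ (spacing one larger than the side of $\mathscr{N}$), one checks that a point $z\in\Z^d$ lies in $\mathscr{V}$ if and only if none of its coordinates is divisible by $2k+3$: given $z$, the unique candidate $y\in(2k+3)\Z^d$ with $z-y\in\{1,\ldots,2k+2\}^d$ exists precisely when $z_i\bmod(2k+3)\in\{1,\ldots,2k+2\}$ for every $i$.

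With this characterization, the claim reduces to finding $x=(x_1,\ldots,x_d)\in B_{k+1}$ such that for every coordinate $i$ and every $a\in\mathscr{A}$ one has $x_i\not\equiv a_i\pmod{2k+3}$. Because the coordinates of $x$ can be chosen independently, this decouples into $d$ separate one-dimensional problems: in each coordinate $i$, I need to pick $x_i\in\{-k-1,-k,\ldots,k+1\}$ avoiding the forbidden residue set $\{a_i\bmod(2k+3):a\in\mathscr{A}\}$.

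Pigeonhole finishes the argument. The candidate set $\{-k-1,\ldots,k+1\}$ consists of $2k+3$ consecutive integers, hence forms a complete residue system modulo $2k+3$, while the forbidden set contains at most $|\mathscr{A}|\leq 2(k+1)=2k+2$ distinct residues. Therefore at least one residue class is available, yielding an admissible $x_i$ in each coordinate, and the product $x=(x_1,\ldots,x_d)\in B_{k+1}$ is the desired shift.

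There is no serious technical obstacle here; the only point that deserves attention is the bookkeeping of the three numbers involved, namely the side length $2(k+1)$ of $\mathscr{N}$, the grid spacing $2k+3$, and the cardinality bound $|\mathscr{A}|\leq 2(k+1)$. These are calibrated so that pigeonhole succeeds with exactly one residue to spare, and this tight calibration is presumably the reason the definitions of $\mathscr{N}$, $\mathscr{V}$ and the shift box $B_{k+1}$ were set up as they were.
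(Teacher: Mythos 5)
Your proof is correct and follows essentially the same route as the paper: characterize $\mathscr{V}$ by the condition that no coordinate is $\equiv 0 \pmod{2k+3}$, decouple the coordinates, and in each coordinate pick a residue class (out of the $2k+3$ available in $\{-k-1,\ldots,k+1\}$) avoiding the at most $2k+2$ forbidden ones. The only cosmetic difference is a sign convention in which residues are declared forbidden, which is immaterial since $B_{k+1}$ is symmetric.
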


\begin{proof}
First we note that
 \begin{align*}
  \mathscr{V} = \left\{ y= (y_1, \ldots y_d)\in\Z^d\colon \left( y_1 \not\equiv 0 \text{ mod }(2k+3)\right), \ldots, \left(y_d \not\equiv 0 \text{ mod }(2k+3)\right)\right\}\, .
\end{align*}
Let $\mathscr{A} = \left\{ v_1, \ldots, v_{2k+2} \right\}$ with $v_1, \ldots, v_{2k+2} \in \Z^d$ and let $v_{1,1}, \ldots, v_{2k+2,1}$ be the first components of the vectors $v_1, \ldots, v_{2k+2}$.
Then we choose the first component $x_1$ of the translation vector $x=(x_1,\ldots,x_d)\in B_{k+1}$ such that its residue class modulo $(2k+3)$ is not among the residue classes of $-v_{1,1}, \ldots, -v_{2k+2,1}$ modulo $(2k+3)$.
This is possible since $-v_{1,1}, \ldots, -v_{2k+2,1}$ assume at most $2k+2$ different residue classes modulo $2k+3$.
The other components of the translation vector $x$ are chosen likewise.
\end{proof}

Let us now define the random variable $\chi$ as
\begin{align}
  \chi := \min_{z\in \mathscr{N}} \pi_z
\end{align}
and, for $x\in\Z^d$, analogously $\chi_x$ as
\begin{align}
  \chi_x := \min_{z\in \mathscr{N}\circ \tau_x} \pi_z\, .
\end{align}

Let $F_\pi$ be the distribution function of the random variable $\pi$ as defined before \eqref{equ:Defh}.

\begin{lem}
  \label{lem:DistrChi}
  For any $a\geq 0$, the value of $F_\chi (a) := \Pr \left[ \chi \leq a \right] $ can be bounded by
  \begin{align*}
    (2k+2)^d F_\pi (a) - \binom{(2k+2)^d}{2}F(a)^{4d-1}\;\leq\; F_\chi (a) \;\leq\; (2k+2)^d F_\pi (a)\, .
  \end{align*}
\end{lem}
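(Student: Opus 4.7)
The plan is to express $\{\chi \le a\}$ as the union $\bigcup_{z \in \mathscr{N}} \{\pi_z \le a\}$ and apply the union bound for the upper estimate and the first Bonferroni inequality for the lower estimate. The upper bound is immediate since $|\mathscr{N}| = (2k+2)^d$ and each $\pi_z$ is distributed like $\pi$, giving
\begin{align*}
  F_\chi(a) \;\le\; \sum_{z \in \mathscr{N}} \Pr[\pi_z \le a] \;=\; (2k+2)^d F_\pi(a).
\end{align*}

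For the lower bound, Bonferroni yields
\begin{align*}
  F_\chi(a) \;\ge\; (2k+2)^d F_\pi(a) \;-\; \sum_{\{z,z'\} \subset \mathscr{N}} \Pr\left[\pi_z \le a,\, \pi_{z'} \le a\right],
\end{align*}
and I need to show that each joint probability on the right is at most $F(a)^{4d-1}$. The key observation will be that, because the conductances are strictly positive and $\pi_z = \sum_{y \sim z} w_{zy}$, the event $\{\pi_z \le a\}$ is contained in the event that every one of the $2d$ conductances incident to $z$ lies in $[0,a]$. Applied to both $z$ and $z'$, the intersection $\{\pi_z \le a\} \cap \{\pi_{z'} \le a\}$ is therefore contained in the event that every edge incident to $\{z,z'\}$ carries a conductance at most $a$.

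The proof then splits along a case distinction on whether $z$ and $z'$ are neighbors. If $z \sim z'$ they share exactly one edge, so the union of their incident edges has cardinality $4d - 1$, and independence of the conductances gives the bound $F(a)^{4d-1}$. If $z \not\sim z'$ the two sets of incident edges are disjoint, giving cardinality $4d$ and a bound of $F(a)^{4d} \le F(a)^{4d-1}$, where we used $F(a) \le 1$. In either case $\Pr[\pi_z \le a,\, \pi_{z'} \le a] \le F(a)^{4d-1}$, and since there are $\binom{(2k+2)^d}{2}$ unordered pairs in $\mathscr{N}$, the claimed lower bound follows. There is no genuine obstacle here; the only point requiring minor care is the neighbor/non-neighbor case distinction, which is handled uniformly by exploiting $F(a) \le 1$.
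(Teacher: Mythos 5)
Your proof is correct and follows essentially the same route as the paper: the union bound for the upper estimate, the Bonferroni inequality for the lower estimate, and the observation that $\{\pi_{z_1}\leq a\}\cap\{\pi_{z_2}\leq a\}$ forces at least $4d-1$ independent conductances to be at most $a$ (since the conductances are positive). The paper states this last point more tersely without your explicit neighbor/non-neighbor case split, but the substance is identical.
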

\begin{proof}
  First we note that
  \begin{align}
    \Pr \left[ \chi \leq a \right] \;=\; \Pr \left[ \min_{z\in \mathscr{N}} \pi_z \leq a \right] \;=\; \Pr \left[ \bigvee_{z\in\mathscr{N}} \left(\pi_z \leq a\right) \right]
    \label{equ:Proof:lem:DistrChi}
  \end{align}
  Since the set $\mathscr{N}$ contains $(2k+2)^d$ vertices, the above RHS is bounded from above by $(2k+2)^d \Pr \left[ \pi\leq a \right]$.
  For the lower bound, we simply expand the RHS of \eqref{equ:Proof:lem:DistrChi} by one term more, i.e.,
  \begin{align}
    \Pr \left[ \bigvee_{z\in\mathscr{N}} \left(\pi_z \leq a\right) \right]
    &\geq (2k+2)^d \Pr \left[ \pi\leq a \right] - \sum_{z_1, z_2\in\mathscr{N},\atop z_1\neq z_2} \Pr \left[ \pi_{z_1}\leq a, \pi_{z_2}\leq a\right]\, .
    \label{equ:Proof:lem:DistrChi:Bound}
  \end{align}
  The claim follows since there are $\binom{(2k+2)^d}{2}$ pairs $z_1, z_2\in\mathscr{N}$ with $z_1\neq z_2$ and in order to achieve that simultaneously $\pi_{z_1}\leq a$ and $\pi_{z_2}\leq a$, at least $4d-1$ independent conductances have to be less than or equal to $a$.
\end{proof}

\begin{lem}\label{lem:DistrChiCont}
  Let $F$ be continuous and let $F_\chi$ be as in Lemma \ref{lem:DistrChi}.
  Then the random variable $F_\chi (\chi)$ is uniformly distributed on $[0,1]$.
\end{lem}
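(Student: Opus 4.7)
The statement is an instance of the classical probability integral transform, so the plan is to verify its single hypothesis --- continuity of $F_\chi$ --- and then invoke the standard argument. For the continuity, I would argue in two steps. First, since the conductance $w$ has continuous distribution $F$, the random variable $\pi_z = \sum_{y\sim z} w_{zy}$ is a sum of $2d$ independent copies of $w$, so $F_\pi$ is a $2d$-fold self-convolution of $F$ and is therefore continuous. Second, $\chi = \min_{z\in\mathscr{N}} \pi_z$ takes values in the finite collection $\{\pi_z\colon z\in\mathscr{N}\}$, so by the union bound
\begin{align*}
  \Pr[\chi = a] \;\leq\; \sum_{z\in\mathscr{N}} \Pr[\pi_z = a] \;=\; 0 \qquad (a \geq 0),
\end{align*}
and $F_\chi$ inherits the atomlessness (equivalently, continuity) from $F_\pi$. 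Note that the dependence structure among the $\pi_z$'s --- they share conductances for neighboring $z$ --- is irrelevant here because the union bound does not require independence.

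Once continuity of $F_\chi$ is in hand, I would fix $u \in (0,1)$ and set $a_u = \sup\{a \geq 0\colon F_\chi(a) \leq u\}$. Since $F_\chi$ is continuous on $[0,\infty)$ with $F_\chi(0) = 0$ (because $\pi > 0$ almost surely) and $\lim_{a\to\infty} F_\chi(a) = 1$, the intermediate value theorem guarantees that $a_u$ is finite and $F_\chi(a_u) = u$. Monotonicity of $F_\chi$ together with the maximality of $a_u$ then yields the set identity $\{F_\chi(\chi) \leq u\} = \{\chi \leq a_u\}$ (the inclusion ``$\supseteq$'' is immediate, and for ``$\subseteq$'' one observes that $\chi > a_u$ forces $F_\chi(\chi) > u$ by the choice of $a_u$ as a supremum). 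Hence
\begin{align*}
  \Pr[F_\chi(\chi) \leq u] \;=\; \Pr[\chi \leq a_u] \;=\; F_\chi(a_u) \;=\; u,
\end{align*}
which together with the boundary cases $u \in \{0,1\}$ identifies the law of $F_\chi(\chi)$ as uniform on $[0,1]$. The only subtlety is the possibility of flat pieces of $F_\chi$ at level $u$, but this is handled cleanly by defining $a_u$ as the supremum rather than an arbitrary preimage of $u$; no serious obstacle arises.
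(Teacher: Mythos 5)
Your proposal is correct and follows essentially the same route as the paper: continuity of $F_\chi$ is deduced from continuity of $F_\pi$ (the paper leaves the union-bound step over the finitely many $\pi_z$ implicit, while you spell it out), and the uniform law is then obtained from the standard probability integral transform via an extremal point of the level set of $F_\chi$ — you use $\sup\{a\colon F_\chi(a)\leq u\}$ and a set identity, the paper sandwiches $\Pr[F_\chi(\chi)\leq a]$ between the $\inf$ and $\sup$ of $\{b\colon F_\chi(b)=a\}$, which is the same idea.
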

\begin{proof}
  Since $\pi$ is the sum of $2d$ independent random variables with continuous distribution function, it has a continuous distribution function as well.
  It follows that $F_\chi\colon [0,\infty) \to [0,1]$ is also continuous and thus surjective.
  
  Let $a\in[0,1)$.
  Since $F_\chi$ is surjective, there exists $b$ such that $F_\chi (b) = a$.
  Since $F_\chi$ is also monotonically increasing, it follows that
  \begin{align*}
    \Pr \left[ F_\chi(\chi) \leq a \right] \;\leq\; \Pr \left[ \chi \leq \sup\, \left\{ b\colon F_\chi (b) = a \right\} \right]
    \;=\; F_\chi \left( \sup\, \left\{ b\colon F_\chi (b) = a \right\} \right)
    \;=\; a
  \end{align*}
  and
  \begin{align*}
    \Pr \left[ F_\chi(\chi) \leq a \right] \;\geq\; \Pr \left[ \chi \leq \inf\, \left\{ b\colon F_\chi (b) = a \right\} \right]
    \;=\; F_\chi \left( \inf\, \left\{ b\colon F_\chi (b) = a \right\} \right)
    \;=\; a\, .
  \end{align*}
\end{proof}

\subsection{Auxiliary lemmas}
\label{subsec:auxlem}
Throughout this section we assume that the distribution function $F$ is continuous.
We recall that $\psi_1^{(n)}$ is the principal Dirichlet eigenvector and that it is associated with the principal Dirichlet eigenvalue $\lambda_1^{(n)}$.
We normalize it such that $\|\psi_1^{(n)} \|_2 =1$.
By virtue of the Perron-Frobenius Theorem we can assume without loss of generality that $\psi_1^{(n)}$ is nonnegative everywhere, see Remark \ref{rem:PF}.

In Lemma \ref{lem:Loc:MassDn} we are going to see that $\psi_1^{(n)}$ concentrates on the cluster $\D_n$, which we defined in Definition \ref{def:InDn}.
Further, when the sites $z_{(1,n)}, z_{(2,n)}, \ldots, z_{(k,n)}$ are the locations of the smallest, the second-smallest up to the $k$th smallest value of $\pi_z$ for $z\in B_n$, then Lemma \ref{lem:uniquemax} implies that the smaller the quotient $\pi_{z_{(1,n)}}/\pi_{z_{(2,n)}}$, the more $\psi_1^{(n)}$ tends to concentrate in the site $z_{(1,n)}$.
Since $F$ is continuous, these minimizers $z_{(1,n)},\ldots, z_{(k,n)}$ are \Pas unique.

In order to bound the quotient $\pi_{z_{(1,n)}}/\pi_{z_{(2,n)}}$ from above in Section \ref{subsec:Extreme}, we collect some further structural properties of the environment in this section.

For what follows it is important to note that with $g$ as defined in \eqref{def:ThmLocFuncG}, we have
\begin{align*}
  \Lambda_g (n) = n^d \Pr \left[ w\leq \sup \left\{ s\colon F(s) = n^{-1/2} \right\} \right]^{2d} = 1\, .
\end{align*}
We thus have the following lemma.

\begin{lem}\label{lem:Loc:In}
  Let $g$ be as in \eqref{def:ThmLocFuncG} and $\eps_2 \in (0, 1/3)$.
  Let $b,k\in\N$.
  Then \Pas for $n$ large enough and for all $z\in B_{n+b}$ the edge set $\mathfrak{E}\left(B_b (z)\right)$ contains at most $3d-1$ edges with conductance less than or equal to $g(n^{1-\eps_2})$.
  Furthermore, if $\D_n$ is as in Definition \ref{def:InDn} with $\eps=\eps_2$, then \Pas for $n$ large enough
  the set $\mathscr{I}_n = B_n\backslash \D_n$ is $b$-sparse and $z_{(1,n)},\ldots, z_{(k,n)}\in\mathscr{I}_n$.
\end{lem}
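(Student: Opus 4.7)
The plan is to handle the three assertions in turn, the decisive input being that by construction of $g$ in \eqref{def:ThmLocFuncG} we have $\Lambda_g(u) = u^d F(g(u))^{2d} = 1$ for every $u > 0$.

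For the first assertion, set $b' := \max(b, 2d)$ (the choice $b' \geq 2d$ will be needed for the next step) and apply Corollary \ref{cor:BC:i3} with $m = 2d$, $\kappa = d$, and $b$ replaced by $b'$. This is permitted because $u^d F(g(u))^{2d} \equiv 1$ is bounded and $\eps_2 < 1/3 = \kappa/(m+\kappa)$. The corollary yields, \Pas for $n$ large, that every $3d$-element subset of $\mathfrak{E}(B_{b'}(z))$ contains at least one edge with conductance exceeding $g(n^{1-\eps_2})$. Equivalently, $\mathfrak{E}(B_{b'}(z))$, and a fortiori its subset $\mathfrak{E}(B_b(z))$, contains at most $3d - 1$ bad edges for every $z \in B_{n+b'}$, which is the first assertion. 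Feeding this into Lemma \ref{lem:InSparse} (with $b'$ and $\eps_2$) gives $b'$-sparseness of $\mathscr{I}_n$ and hence, by Remark \ref{rem:b1b2sparse}, the claimed $b$-sparseness.

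The third assertion is the main obstacle. My plan is to prove the stronger bound $\pi_{z_{(k,n)}} \leq g(n^{1-\eps_2})$: once this holds, every one of the $2d$ conductances incident to each $z_{(i,n)}$ with $i \leq k$ is bounded by $\pi_{z_{(k,n)}} \leq g(n^{1-\eps_2})$, so each $z_{(i,n)}$ is isolated in the environment $\tilde{\boldsymbol{w}}_{g(n^{1-\eps_2})}$ and cannot belong to the unique infinite cluster $\D_n$, forcing $z_{(i,n)} \in \mathscr{I}_n$. To produce $k$ sites in $B_n$ with $\pi$-value at most $g(n^{1-\eps_2})$ I would apply Lemma \ref{lem:BC:ii} (with $k$ replaced by $\max(k,2)$ to satisfy its hypothesis $k \geq 2$) to the rescaled function $\tilde g(u) := g(u^{1-\eps_2})/(2d)$, so that $2d\tilde g(n) = g(n^{1-\eps_2})$. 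The crucial check is that $\Lambda_{\tilde g}(n)/\log\log n \to \infty$: using the assumption $F(ab) \geq b F(a)$ of Theorem \ref{thm:Loc} with $a = g(n^{1-\eps_2})$ and $b = 1/(2d)$, together with $F(g(n^{1-\eps_2})) = n^{-(1-\eps_2)/2}$, one obtains
\[
\Lambda_{\tilde g}(n) \;=\; n^d F\bigl(g(n^{1-\eps_2})/(2d)\bigr)^{2d} \;\geq\; (2d)^{-2d}\, n^{d\eps_2},
\]
which dominates $\log\log n$ by a polynomial factor, as required.
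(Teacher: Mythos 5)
Your proof is correct and follows essentially the same route as the paper's: Corollary \ref{cor:BC:i3} with $m=2d$, $\kappa=d$ for the first claim, Lemma \ref{lem:InSparse} together with Remark \ref{rem:b1b2sparse} for $b$-sparseness, and Lemma \ref{lem:BC:ii} to place the minimizers in $\mathscr{I}_n$. The only variation is in the last step: the paper applies Lemma \ref{lem:BC:ii} to $g\bigl((\cdot)^{1-\eps_2/2}\bigr)$ and then absorbs the factor $2d$ by the fast decay of $g$ (so that $2d\,g(n^{1-\eps_2/2})\leq g(n^{1-\eps_2})$ eventually), whereas you apply it to $g\bigl((\cdot)^{1-\eps_2}\bigr)/(2d)$ and absorb the factor via the hypothesis $F(ab)\geq bF(a)$ of Theorem \ref{thm:Loc} --- both variants quietly invoke standing assumptions of Theorem \ref{thm:Loc} that are not restated in the lemma, so yours is a legitimate, marginally cleaner bookkeeping of the same idea.
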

\begin{proof}
Since $\Lambda_g$ is constant and therefore bounded, the first claim follows by virtue of Corollary \ref{cor:BC:i3} (with $m=2d$ and $\kappa=d$).

Since the function $n\mapsto g(n^{1-\eps_2})$ decreases to zero, \Pas for $n$ large enough the cluster $\D_n$ is the unique infinite cluster of the environment $\tilde{\vec{w}}_{g\left( n^{1-\eps_2} \right)}$.
We can thus apply Lemma \ref{lem:InSparse} and obtain that \Pas for $n$ large enough the set $\mathscr{I}_n$ is $b$-sparse.

For the last statement consider the following:
Since the quotient $\Lambda_{g\left((\cdot)^{1-\eps_2/2}\right)}(n)/\log\log n$ diverges for $n$ growing to infinity, Lemma \ref{lem:BC:ii} implies that \Pas for $n$ large enough $\pi_{z_{(1,n)}}\leq \ldots \leq \pi_{z_{(k,n)}} < 2d g(n^{1-\eps_2/2})$.
This implies that eventually $z_{(1,n)},\ldots, z_{(k,n)}\in\mathscr{I}_n$.
\end{proof}

\begin{lem}\label{lem:Loc:MassDn}
  Let the function $g$ be as in \eqref{def:ThmLocFuncG}.
  Assume that there exists $\eps_1\in (0,1)$ such that one of the two cases occurs: $g$ varies regularly at infinity with index $\rho<-(2+\eps_1)$ or the product $n^{2+\eps_1} g(n)$ converges monotonically to zero as $n$ grows to infinity. Further, let $\eps= \eps_2 = \frac{7\eps_1}{8(2+\eps_1)}$ and $\D_n$ be as in Definition \ref{def:InDn}. Then \Pas for $n$ large enough
  \begin{align}
     \bigl\| \psi_1^{(n)} \bigr\|^2_{\ell^2(\D_n)} \leq n^{-\eps_1/2}\, .
  \end{align}
\end{lem}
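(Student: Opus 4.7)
The plan is to combine the Poincar\'e-type inequality on $\D_n$ from Proposition \ref{prop:SpectralGapD} with an upper bound on $\lambda_1^{(n)}$ derived from Theorem \ref{thm:EigvalsUpperBoundSufficient}. First I would verify the hypotheses of Proposition \ref{prop:SpectralGapD} with $\eps = \eps_2$: fix $\xi>0$ small enough that $\Pr[w>\xi] > p_c(d)$ and Lemma \ref{lem:PropIICBox} applies with $\eta = 1/2$; then condition (i) is Lemma \ref{lem:Loc:In} with $b = 3d$, (ii) is the uniqueness of the infinite cluster in Bernoulli bond percolation, (iii) is Lemma \ref{lem:SpectralGapPerc}, and (iv) is Lemma \ref{lem:InjectiveMap}. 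Inserting $\psi_1^{(n)}$ into the conclusion and using $\mathcal{E}^{\vec{w}}_{\D_n}\bigl(\psi_1^{(n)}\bigr) \le \mathcal{E}^{\vec{w}}\bigl(\psi_1^{(n)}\bigr) = \lambda_1^{(n)}$ yields, eventually \Pas,
\begin{align*}
  \bigl\|\psi_1^{(n)}\bigr\|_{\ell^2(\D_n)}^2 \;\le\; \Bigl(2^{d+1}(\log n)^{4d^2}\, g\bigl(n^{1-\eps_2}\bigr)^{-1} + 3c_1\xi^{-1}n^2\Bigr)\,\lambda_1^{(n)}\, .
\end{align*}

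Next I would bound $\lambda_1^{(n)}$ from above. Since $\Lambda_g \equiv 1$ by the defining property $\Pr[w\le g(v)] = v^{-1/2}$, Theorem \ref{thm:EigvalsUpperBoundSufficient} does not apply to $g$ directly, so I would work with the slightly inflated function $\hat g(u) := g\bigl(u/(\log u)^{1/d}\bigr)$. A direct computation gives $\Lambda_{\hat g}(u) = \log u$, which comfortably satisfies \eqref{equ:UpperCond}, hence \Pas for $n$ large, $\lambda_1^{(n)} \le 2d\,\hat g(n)$.

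The heart of the argument is to estimate the ratio $\hat g(n)/g\bigl(n^{1-\eps_2}\bigr)$. Since $n^{1-\eps_2} < n/(\log n)^{1/d}$ for $n$ large, in case (b) the monotone decrease of $u\mapsto u^{2+\eps_1}g(u)$ gives
\begin{align*}
  \frac{\hat g(n)}{g\bigl(n^{1-\eps_2}\bigr)}
  \;\le\; \Bigl(\tfrac{n^{\eps_2}}{(\log n)^{1/d}}\Bigr)^{-(2+\eps_1)}
  \;=\; n^{-\eps_2(2+\eps_1)}(\log n)^{(2+\eps_1)/d}
  \;=\; n^{-7\eps_1/8}(\log n)^{(2+\eps_1)/d}\, ,
\end{align*}
where the exponent $7\eps_1/8$ is produced precisely by the choice $\eps_2 = 7\eps_1/\bigl(8(2+\eps_1)\bigr)$. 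Combined with $\hat g(n) \le n^{-(2+\eps_1)}(\log n)^{(2+\eps_1)/d}$ (which follows from $n^{2+\eps_1}g(n)\to 0$), the previous display yields
\begin{align*}
  \bigl\|\psi_1^{(n)}\bigr\|_{\ell^2(\D_n)}^2 \;\le\; C_1(\log n)^{4d^2 + (2+\eps_1)/d}\,n^{-7\eps_1/8} + C_2(\log n)^{(2+\eps_1)/d}\,n^{-\eps_1}\, ,
\end{align*}
which is dominated by $n^{-\eps_1/2}$ for $n$ large since $7/8 > 1/2$. Case (a), where $g$ varies regularly with index $\rho < -(2+\eps_1)$, is handled identically after replacing the monotonicity estimate by Potter's bounds $g(u_2)/g(u_1) \le A(u_2/u_1)^{\rho + \delta}$ for $\delta>0$ chosen small enough that $\rho + \delta < -(2+\eps_1)$.

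The main technical point is the bookkeeping of exponents: the value $\eps_2 = 7\eps_1/\bigl(8(2+\eps_1)\bigr)$ is engineered so that the polynomial gain $n^{-7\eps_1/8}$ in the ratio $\hat g(n)/g\bigl(n^{1-\eps_2}\bigr)$ dominates both the $n^2$ factor from the Poincar\'e constant on $\C^\xi$ (after multiplication by $\lambda_1^{(n)} \lesssim n^{-(2+\eps_1)}$) and all polylogarithmic factors, leaving the clean bound $n^{-\eps_1/2}$.
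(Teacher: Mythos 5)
Your proof is correct and follows essentially the same route as the paper: apply Proposition \ref{prop:SpectralGapD} (with its hypotheses verified exactly as you list them) to lower-bound $\mathcal{E}^{\vec{w}}_{\D_n}\bigl(\psi_1^{(n)}\bigr)$, upper-bound $\lambda_1^{(n)}$ via Theorem \ref{thm:EigvalsUpperBoundSufficient} applied to a slightly inflated version of $g$, and close the gap with the ratio estimate coming from the decay hypothesis on $g$ (monotone decay of $u^{2+\eps_1}g(u)$, or Potter bounds in the regularly varying case). The only difference is cosmetic: the paper inflates by a power, taking $g(n^{1-\eps_3})$ with $\eps_3=\eps_1/(8(2+\eps_1))$ so that $\Lambda_{g((\cdot)^{1-\eps_3})}$ diverges polynomially, whereas you inflate logarithmically via $\hat g(u)=g\bigl(u/(\log u)^{1/d}\bigr)$ with $\Lambda_{\hat g}(u)=\log u$; both choices satisfy \eqref{equ:UpperCond} and yield a ratio decaying faster than $n^{-\eps_1/2}$.
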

\begin{proof}
We aim to apply Proposition \ref{prop:SpectralGapD} to the set $\D_n$.
By virtue of Lemma \ref{lem:Loc:In} with $b=3d$, it follows that Assumption \ref{item:SpectralGapD:Box} of Proposition \ref{prop:SpectralGapD} is fulfilled \Pas for $n$ large enough.
Further we choose $\xi>0$ small enough such that \Pas for $n$ large enough Assumptions \ref{item:SpectralGapD:Cluster}, \ref{item:SpectralGapD:SpectralGapC} and \ref{item:SpectralGapD:InjMap} are fulfilled.
This is possible by virtue of the Lemmas \ref{lem:PropIICBox}, \ref{lem:InjectiveMap} and \ref{lem:SpectralGapPerc}.
It follows that there exists $c>0$ such that \Pas for $n$ large enough
\begin{align}
	\mathcal{E}_{\D_n}^{\vec{w}} \bigl( f \bigr) \geq  \left( 2^{d+1} (\log n)^{4d^2} g\bigl(n^{1-\eps_2}\bigr)^{-1} + c n^2 \right)^{-1} \| f \|^2_{\ell^2(\D_n)}\, ,
	\label{equ:Loc:MassDnOne}
\end{align}
for any function $f\colon\Z^d\to \R$ with $\supp\, f\subseteq B_n$.
In any case, the assumptions imply that the product $n^{2+\eps_1} g(n)$ converges to zero as $n$ grows to infinity.
Therefore $n^2 g\bigl(n^{1-\eps_2}\bigr)/(\log n)^{4d^2}$ converges to zero as well.
It follows that if $C = 2^{d+1} + 1$, then \eqref{equ:Loc:MassDnOne} implies that \Pas for $n$ large enough
\begin{align*}
  \mathcal{E}_{\D_n}^{\vec{w}} \bigl( f \bigr) \geq  \frac{1}{C}\, \frac{g\bigl(n^{1-\eps_2}\bigr)}{(\log n)^{4d^2} } \| f \|^2_{\ell^2(\D_n)}\, .
\end{align*}
On the other hand, we know that for any $\eps_3>0$ the term $\Lambda_{g\left( (\cdot)^{1-\eps_3} \right)} (n)/\log\log n$ diverges.
Let us specifically choose $\eps_3 = \eps_1 \left(8 (2+\eps_1)\right)^{-1}$.
Now we use Theorem \ref{thm:EigvalsUpperBoundSufficient} and the fact that the Dirichlet energy $\mathcal{E}^{\vec{w}}$ majorizes $\mathcal{E}_{\D_n}^{\vec{w}}$ to infer that \Pas for $n$ large enough
\begin{align}
  2d g(n^{1-\eps_3}) \geq \lambda_1^{(n)} = \mathcal{E}^{\vec{w}} \left( \psi_1^{(n)} \right)\geq \mathcal{E}_{\D_n}^{\vec{w}} \bigl( \psi_1^{(n)} \bigr) \geq  \frac{1}{C}\, \frac{g\bigl(n^{1-\eps_2}\bigr)}{(\log n)^{4d^2} } \bigl\| \psi_1^{(n)} \bigr\|^2_{\ell^2(\D_n)}\, .
  	\label{equ:ProofLoc1}
\end{align}
When we solve this inequality for $\bigl\| \psi_1^{(n)} \bigr\|_{\ell^2 (\D_n)}^2$, we obtain that
\begin{align*}
   \bigl\| \psi_1^{(n)} \bigr\|_{\ell^2 (\D_n)}^2 \leq c_1 C\,\frac{g(n^{1-\eps_3}) (\log n)^{4d^2}}{g(n^{1-\eps_2})}\, .
\end{align*}
To finish the proof, we use one of the additional assumptions about $g$:
If $g$ varies regularly at infinity with index $\rho<-(2+\eps_1)$, then we can write $g(n) = n^{\rho} L(n)$ where $L$ varies slowly at infinity. In this case we observe that eventually
\begin{align*}
   c_1 C\,\frac{g(n^{1-\eps_3}) (\log n)^{4d^2}}{g(n^{1-\eps_2})} = c_1 C n^{\frac{3\rho \eps_1}{4 (2+\eps_1)}} \frac{(\log n)^{4d^2} L(n^{1-\eps_3})}{L(n^{1-\eps_2})}\leq n^{-\eps_1/2}\, ,
\end{align*}
which implies the claim.
In the other case, i.e., if the product $n^{2+\eps_1} g(n)$ converges monotonically to zero as $n$ tends to infinity, we observe that eventually
\begin{align*}
   c_1 C\,\frac{g(n^{1-\eps_3}) (\log n)^{4d^2}}{g(n^{1-\eps_2})} \leq c_1 C n^{-(2+\eps_1)(\eps_2-\eps_3)} (\log n)^{4d^2}\leq n^{-\eps_1/2}\, ,
\end{align*}
which implies the claim as well.
\end{proof}
\begin{lem}\label{lem:uniquemax}
   Let $y,z\in B_n$ with $\pi_z < \pi_y$ and $y\nsim z$. Assume that $\psi_1^{(n)}$ is nonnegative. Further, define $m_y = 2 \max_{x\colon x\sim y} \psi_1^{(n)}(x)$. Then the mass $\psi_1^{(n)} (y)$ is bounded from above by
   \begin{align}
	\psi_1^{(n)}(y) \leq \frac{m_y}{1 - \frac{\pi_z}{\pi_y}}\, .
	\end{align}
\end{lem}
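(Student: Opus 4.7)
The plan is to combine the eigenvalue equation at the site $y$ with a simple variational upper bound on $\lambda_1^{(n)}$, using the assumption $\pi_z<\pi_y$ to guarantee the relevant coefficient is positive.

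First, I would evaluate the eigenvalue equation $-\mathcal{L}_{\vec{w}}\psi_1^{(n)}=\lambda_1^{(n)}\psi_1^{(n)}$ at $y$. Rearranging the definition of $\mathcal{L}_{\vec{w}}$ in \eqref{equ:DefGenerator} gives
\begin{align*}
  \left(\pi_y-\lambda_1^{(n)}\right)\psi_1^{(n)}(y)\;=\;\sum_{x\colon x\sim y} w_{xy}\,\psi_1^{(n)}(x).
\end{align*}

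Second, I would observe that inserting the admissible test function $f=\delta_z$ (note $z\in B_n$) into the variational formula \eqref{equ:Variational} yields $\lambda_1^{(n)}\leq \mathcal{E}^{\vec{w}}(\delta_z)=\pi_z$. Since by hypothesis $\pi_z<\pi_y$, this gives $\pi_y-\lambda_1^{(n)}\geq \pi_y-\pi_z>0$, so the coefficient on the left is strictly positive and we may divide by it. Third, since $\psi_1^{(n)}$ is nonnegative and $\sum_{x\sim y} w_{xy}=\pi_y$, the right-hand side is bounded by $\pi_y\max_{x\sim y}\psi_1^{(n)}(x)$, so
\begin{align*}
  \psi_1^{(n)}(y)\;\leq\;\frac{\pi_y\max_{x\sim y}\psi_1^{(n)}(x)}{\pi_y-\lambda_1^{(n)}}\;=\;\frac{\max_{x\sim y}\psi_1^{(n)}(x)}{1-\lambda_1^{(n)}/\pi_y}\;\leq\;\frac{\max_{x\sim y}\psi_1^{(n)}(x)}{1-\pi_z/\pi_y}.
\end{align*}
Recalling the definition $m_y=2\max_{x\sim y}\psi_1^{(n)}(x)$, this yields a bound that is in fact a factor $2$ sharper than the stated inequality, so the claim follows.

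No serious obstacle is expected; the argument is a one-line manipulation of the eigenvalue equation combined with the trivial variational upper bound $\lambda_1^{(n)}\leq\pi_z$. The extra factor $2$ in $m_y$ and the hypothesis $y\not\sim z$ are not actually needed for this estimate itself; presumably they reflect slack convenient for the way the lemma is invoked later in the proof of Theorem \ref{thm:Loc} (for instance, to absorb contributions from $\psi_1^{(n)}(z)$ in a neighbor sum when bounding $m_y$ by the mass away from $z_n$).
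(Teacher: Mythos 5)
Your proof is correct, and it takes a genuinely different route from the paper. The paper argues by contradiction: assuming $m_y \pi_y + \psi_1^{(n)}(y)(\pi_z-\pi_y)<0$, it constructs an explicit competitor $\phi$ that transfers mass from $y$ to $z$ (keeping $\|\phi\|_2=1$), expands the Dirichlet energy of $\phi$, and shows it does not exceed $\lambda_1^{(n)}$, contradicting the Rayleigh--Ritz minimality together with the simplicity of the principal eigenvalue from Remark \ref{rem:PF}. That computation is where the hypothesis $y\nsim z$ earns its keep: the energy change decouples into separate sums over edges at $y$ and edges at $z$ only because $\{y,z\}\notin\edges_d$. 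Your argument instead reads off the pointwise eigenvalue equation $\bigl(\pi_y-\lambda_1^{(n)}\bigr)\psi_1^{(n)}(y)=\sum_{x\sim y}w_{xy}\psi_1^{(n)}(x)$ (valid with the zero extension outside $B_n$), combines it with the trivial test-function bound $\lambda_1^{(n)}\leq\mathcal{E}^{\vec{w}}(\delta_z)=\pi_z<\pi_y$ to make the coefficient positive, and uses nonnegativity to bound the right-hand side by $\pi_y\max_{x\sim y}\psi_1^{(n)}(x)$. This is shorter, avoids the perturbation construction entirely, does not use $y\nsim z$, and yields the constant $m_y/2$ in place of $m_y$, i.e.\ a factor-$2$ improvement. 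What the paper's variational route buys in exchange is that it works purely at the level of energies rather than the eigenvalue equation, which is the template the author reuses elsewhere; but for this lemma as stated your derivation is a clean and strictly stronger replacement.
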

\begin{proof}[Proof of Lemma \ref{lem:uniquemax}]
We assume the contrary, i.e., we assume that
\begin{align}
  m_y \pi_y + \psi_1^{(n)}(y) \left( \pi_z - \pi_y \right) <0\, .
  \label{equ:uniquemaxass}
\end{align}
Then we define a new function $\phi:\mathbb{Z}^d \rightarrow \mathbb{R}_+$ by setting
\begin{align}
   \phi(x) &= \begin{cases}
   			\psi_1^{(n)}(x), &\text{for } x \notin \{ y,z \},\\
   			m_y, &\text{for } x = y,\\
   			\sqrt{\psi_1^{(n)} (y)^2 + \psi_1^{(n)} (z)^2 - m_y^2}, &\text{for } x = z.
   \end{cases}
\end{align}
Note that since \eqref{equ:uniquemaxass} implies that $\psi_1^{(n)}(y) > m_y$, it must be $\phi(z) > \psi_1^{(n)} (z)$.
Obviously, $\supp\, \phi \subseteq B_n$ and $\| \phi\|_2 = 1$.
Therefore, by the variational formula \eqref{equ:Variational} and Remark \ref{rem:PF}, the Dirichlet energy $\left\langle \phi, -\mathcal{L}_{\boldsymbol{w}} \phi\right\rangle$ is larger than the principal Dirichlet eigenvalue $\lambda_1^{(n)}$.

However, the Dirichlet energy of $\phi$ is given by
\begin{align}
   \left\langle \phi, -\mathcal{L}_{\boldsymbol{w}} \phi\right\rangle &= \lambda_1^{(n)} + \left[ \sum_{x:x\sim y} w_{xy} \left( \psi_1^{(n)}(x) - m_y \right)^2 - \sum_{x:x\sim y} w_{xy} \left( \psi_1^{(n)}(x) - \psi_1^{(n)}(y) \right)^2\right]\nonumber\\
   			&\qquad + \left[\sum_{x:x\sim z} w_{xz} \left( \psi_1^{(n)}(x) - \phi(z) \right)^2-  \sum_{x:x\sim z} w_{xz} \left( \psi_1^{(n)}(x) - \psi_1^{(n)}(z) \right)^2\right]\, .
   \label{equ:DirEnVarphi}
\end{align}
Evaluation of the first bracketed summand on the RHS gives:
\begin{align}
  \sum_{x:x\sim y} w_{xy} &\left( \psi_1^{(n)}(x) - m_y \right)^2 - \sum_{x:x\sim y} w_{xy} \left( \psi_1^{(n)}(x) - \psi_1^{(n)}(y) \right)^2\nonumber\\
  &= \sum_{x:x\sim y} w_{xy} \left(  \psi_1^{(n)}(y) -m_y \right) \left(  2\psi_1^{(n)}(x) - m_y - \psi_1^{(n)}(y) \right)\nonumber\\
  &\leq -\psi_1^{(n)}(y) \sum_{x:x\sim y} w_{xy}  \left(  \psi_1^{(n)}(y) -m_y \right)\, ,
  \label{equ:EvalFirstSummand}
\end{align}
where the last inequality follows by the definition of $m_y$ and since Assumption \eqref{equ:uniquemaxass} implies that $\psi_1^{(n)}(y) > m_y$.
Further, we evaluate the second bracketed summand on the RHS of \eqref{equ:DirEnVarphi} as
\begin{align*}
\sum_{x:x\sim z} w_{xz} \left( \psi_1^{(n)}(x) - \phi(z) \right)^2&-  \sum_{x:x\sim z} w_{xz} \left( \psi_1^{(n)}(x) - \psi_1^{(n)}(z) \right)^2\nonumber\\
  	&\hspace{-.7cm}= \sum_{x:x\sim z} w_{xz} \left( \phi(z) - \psi_1^{(n)}(z)\right) \left( \phi(z) + \psi_1^{(n)}(z) -2 \psi_1^{(n)}(x)  \right)\, .
\end{align*}
Since $\psi_1^{(n)}$ is nonnegative and since Assumption \eqref{equ:uniquemaxass} implies that $\phi(z) > \psi_1^{(n)} (z)$, we conclude that
\begin{align}
\sum_{x:x\sim z} w_{xz} \left( \psi_1^{(n)}(x) - \phi(z) \right)^2&-  \sum_{x:x\sim z} w_{xz} \left( \psi_1^{(n)}(x) - \psi_1^{(n)}(z) \right)^2\nonumber\\
  	&\hspace{-1cm}\leq \sum_{x:x\sim z} w_{xz} \left( \phi(z)^2 - \psi_1^{(n)}(z)^2\right) = \sum_{x:x\sim z} w_{xz} \left( \psi_1^{(n)}(y)^2 - m_y^2\right)\, ,
  	\label{equ:EvalSecondSummand}
\end{align}
where the last equality follows by the definition of $\phi(z)$.
When we insert \eqref{equ:EvalFirstSummand} and \eqref{equ:EvalSecondSummand} into \eqref{equ:DirEnVarphi}, then we obtain
\begin{align}
   \left\langle \phi, -\mathcal{L}_{\boldsymbol{w}} \phi\right\rangle &\leq \lambda_1^{(n)} - \psi_1^{(n)}(y)\sum_{x:x\sim y} w_{xy}  \left(  \psi_1^{(n)}(y) -m_y \right) + \sum_{x:x\sim z} w_{xz} \left( \psi_1^{(n)}(y)^2 - m_y^2\right)\nonumber\\
   		&= \lambda_1^{(n)} + \psi_1^{(n)} (y)^2 \left( \pi_z - \pi_y \right) + m_y \left( \psi_1^{(n)}(y) \pi_y  - m_y \pi_z  \right)\nonumber\\
   		&\leq \lambda_1^{(n)} + \psi_1^{(n)} (y) \left[ m_y \pi_y + \psi_1^{(n)}(y) \left( \pi_z - \pi_y \right) \right]\, .
\end{align}
Under Assumption \eqref{equ:uniquemaxass} and because $\psi_1^{(n)} (y)$ is nonnegative, it follows that the Dirichlet energy of $\phi$ is not larger than $\lambda_1^{(n)}$.
This is a contradiction to the considerations above.
\end{proof}

Let $F_\pi$ be as defined before \eqref{equ:Defh}.
Then we have the following two lemmas.

\begin{lem}
  \label{lem:InequDistrPi}
  If there exists $a^\ast>0$ such that $F(ab) \geq bF(a)$ for all $a\leq a^\ast$ and all $0\leq b\leq1$, then $F_\pi(ab) \geq b^{2d} F_\pi(a)$ for all $a\leq a^\ast$ and all $0\leq b\leq1$.
\end{lem}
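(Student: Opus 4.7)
The plan is to establish, by induction on $n$, the stronger statement
\begin{align*}
  F^{(n)}(ab) \geq b^n F^{(n)}(a) \qquad \text{for all } a \leq a^\ast,\ 0\leq b\leq 1,
\end{align*}
where $F^{(n)}$ denotes the distribution function of the sum of $n$ independent copies of $w$. The base case $n=1$ is exactly the hypothesis on $F$, and specialization to $n = 2d$ gives the lemma, since $F_\pi = F^{(2d)}$. I first note that the hypothesis (letting $b \uparrow 1$) forces $F$ to be continuous on $(0, a^\ast]$; combined with $F(0) = 0$ from positivity of the conductances, this means $F$ has no atoms on $[0, a^\ast]$.

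For the inductive step I would start from the convolution identity $F^{(n)}(c) = \int_0^c F^{(n-1)}(c-y)\, dF(y)$. The substitution $y = bz$ rewrites this as
\begin{align*}
  F^{(n)}(ab) = \int_0^a F^{(n-1)}\bigl(b(a-z)\bigr)\, d\tilde F_b(z), \qquad \tilde F_b(z) := F(bz).
\end{align*}
Applying the inductive hypothesis pointwise inside the integrand (valid since $a - z \leq a \leq a^\ast$) yields
\begin{align*}
  F^{(n)}(ab) \;\geq\; b^{n-1}\int_0^a F^{(n-1)}(a-z)\, d\tilde F_b(z),
\end{align*}
so the remaining task is to prove that $\int_0^a F^{(n-1)}(a-z)\, d\Psi(z) \geq 0$, where $\Psi(z) := \tilde F_b(z) - bF(z)$.

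The key observation is that $\Psi(z) \geq 0$ on $[0, a]$ by hypothesis and $\Psi(0) = 0$, while $z \mapsto F^{(n-1)}(a-z)$ is non-negative, non-increasing on $[0,a]$, and vanishes at $z = a$. Stieltjes integration by parts (clean because $\tilde F_b$ and $bF$ share no atoms on $[0,a^\ast]$) then gives
\begin{align*}
  \int_0^a F^{(n-1)}(a-z)\, d\Psi(z)
  = -\int_0^a \Psi(z)\, d\bigl(F^{(n-1)}(a-z)\bigr) \;\geq\; 0,
\end{align*}
because the boundary terms vanish and $\Psi \geq 0$ is integrated against the non-positive Stieltjes measure of a non-increasing function. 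Combining this with the preceding inequality gives $F^{(n)}(ab) \geq b^{n-1} \cdot b \cdot F^{(n)}(a) = b^n F^{(n)}(a)$, closing the induction. The only delicate point is justifying integration by parts in the presence of atoms, but as noted the hypothesis rules those out on $[0, a^\ast]$, making this step straightforward.
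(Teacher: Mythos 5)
Your proof is correct, and its overall strategy---induction on the number of summands through the convolution identity, extracting one factor of $b$ per copy of $w$---is the same as the paper's. The difference lies in how the factor of $b$ attached to the integrating measure is extracted. The paper never compares $\d F(b\,\cdot)$ with $b\,\d F(\cdot)$: after applying the hypothesis to the integrand, $F(ab-t)=F\bigl(b(a-t/b)\bigr)\ge bF(a-t/b)$, it recognizes the resulting integral as $b\,\Pr[w_1/b+w_2\le a]=b\,\Pr\bigl[w_1\le b(a-w_2)\bigr]$ and applies the hypothesis a second time, again to an integrand, with the roles of the two variables exchanged. You instead substitute $y=bz$ and are left with showing $\int_0^a F^{(n-1)}(a-z)\,\d\bigl(F(bz)-bF(z)\bigr)\ge 0$, which you settle by integration by parts against a non-negative, non-increasing integrand vanishing at $z=a$. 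That step is sound; your observation that the hypothesis forces continuity of $F$ on $(0,a^\ast]$ (making the integration by parts clean) is correct, and one could also bypass the regularity discussion entirely via Fubini, since the integral in question equals $\int_{[0,a]}\bigl(F(b(a-s))-bF(a-s)\bigr)\,\d F^{(n-1)}(s)\ge 0$. The paper's version is slightly shorter and needs no regularity of $F$, but the two arguments agree in substance.
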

\begin{proof}
  Let $a\leq a^\ast$, $0\leq b\leq1$ and let $w_1, w_2$ be two independent copies of $w$.
  Then
  \begin{align*}
    \Pr [ w_1 + w_2 \leq ab] &\;=\; \int_0^\infty \Pr [w_2 \leq ab - t] \d \Pr [w_1 \leq t]
      \;\geq\; b \int_0^\infty \Pr \left[w_2 \leq a - \frac{t}{b}\right] \d \Pr [w_1 \leq t]\, .
  \end{align*}
  where we have used that $\Pr [w_2 \leq ab - t]\geq b \Pr [w_2 \leq a - t/b]$ and $\d \Pr [w_1 \leq t] \geq 0$ for all $t\in[0,\infty)$.
  It follows that
    \begin{align*}
    \Pr [ w_1 + w_2 \leq ab] &\;\geq\; b \Pr [ w_2 \leq a - w_1/b] \;=\; b \Pr [ w_1 \leq ab - bw_2]\, .
  \end{align*}
  By the same reasoning as before we infer that $\Pr [ w_1 + w_2 \leq ab] \geq b^2 \Pr [ w_1 + w_2 \leq a]$.
  The claim follows by induction.
\end{proof}

\begin{lem}
  \label{lem:DistrPi}
  If there exists $\gamma\in [0,1/4)$ such that $F$ varies regularly at zero with index $\gamma$, then $F_\pi$ varies regularly at zero with index $2d\gamma$.
\end{lem}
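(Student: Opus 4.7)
The plan is to pass to Laplace transforms and invoke the Karamata Tauberian Theorem. Writing $F(x) = x^\gamma L(x)$ with $L$ slowly varying at zero, the Abelian half of the theorem (or a direct calculation by integration by parts and the substitution $y = sx$, using that $L(y/s)/L(1/s) \to 1$ uniformly on compact sets as $s \to \infty$) yields
\begin{align*}
	\hat F(s) \;:=\; \E\bigl[e^{-sw}\bigr] \;\sim\; \Gamma(\gamma+1)\, s^{-\gamma} L(1/s) \qquad (s \to \infty).
\end{align*}
Hence $\hat F$ is regularly varying at infinity with index $-\gamma$ and slowly varying factor $s \mapsto L(1/s)$.

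Since $\pi$ is the sum of $2d$ i.i.d.\ copies of $w$, the Laplace transform factorises as $\hat F_\pi(s) = \hat F(s)^{2d}$, so
\begin{align*}
	\hat F_\pi(s) \;\sim\; \Gamma(\gamma+1)^{2d}\, s^{-2d\gamma} L(1/s)^{2d} \qquad (s \to \infty),
\end{align*}
which is regularly varying at infinity with index $-2d\gamma$ (the factor $L(1/s)^{2d}$ is slowly varying at infinity). Applying the Tauberian half of Karamata's theorem to the non-decreasing function $F_\pi$ then gives
\begin{align*}
	F_\pi(x) \;\sim\; \frac{\Gamma(\gamma+1)^{2d}}{\Gamma(2d\gamma+1)}\, x^{2d\gamma} L(x)^{2d} \qquad (x \to 0^+),
\end{align*}
which shows in particular that $F_\pi$ varies regularly at zero with index $2d\gamma$.

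The main point of care is invoking the correct \emph{dual} form of the Tauberian theorem, relating behaviour of the CDF at $0$ to behaviour of the Laplace--Stieltjes transform at $\infty$; this is standard (see, e.g., \cite[Ch.~1]{BinghamRV}). An alternative route that avoids Tauberian machinery altogether is induction on the number $n$ of i.i.d.\ summands: one writes $G_{n+1}(a) = \int_0^a G_n(a-x)\, dF(x)$ and sandwiches this integral from above and below by Riemann--Stieltjes sums over a partition of $[0,a]$ into $k$ equal pieces. Dividing by $G_n(a) F(a)$ and using the Uniform Convergence Theorem for slowly varying functions, both bounds converge as $a \to 0$, and then as $k \to \infty$ to the common limit $\gamma B(\gamma, n\gamma+1)$ when $\gamma > 0$; the case $\gamma = 0$ follows from the elementary squeeze $F(a/(n+1))^{n+1} \leq G_{n+1}(a) \leq F(a)^{n+1}$, whose outer terms are slowly varying and asymptotically equivalent.
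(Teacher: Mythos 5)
Your proposal is correct and follows essentially the same route as the paper: the paper also passes to the Laplace(--Stieltjes) transform, uses the factorisation $\mathscr{L}[F_\pi]=(\mathscr{L}[F])^{2d}$, and applies Karamata's Tauberian theorem (Feller XIII.5, Theorem 3) in both directions to transfer regular variation of index $\gamma$ at zero to index $-\gamma$ at infinity and back. The explicit asymptotic constant and the alternative inductive argument you sketch are extra, but the core argument coincides with the paper's.
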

\begin{proof}
  Let $\mathscr{L} [F]$ be the Laplace transform of $F$. Then the Laplace transform of $F_\pi$ fulfills
  \begin{align*}
    \mathscr{L} \left[F_\pi\right] = \left( \mathscr{L} [F]\right)^{2d}\, .
  \end{align*}
  By virtue of the Tauberian theorems, more precisely by virtue of Theorem 3 in \cite[XIII.5]{Feller2} (or, equivalently Theorem 1.7.1' of \cite{BinghamRV}), $\mathscr{L} [F]$ varies regularly at infinity with index $-\gamma$. It follows that $\mathscr{L} \left[F_\pi\right]$ varies regularly at infinity with index $-2d\gamma$. Hence, by another application of Theorem 3 in \cite[XIII.5]{Feller2} we obtain that $F_\pi$ varies regularly at zero with index $2d\gamma$.
\end{proof}

\begin{lem}
  \label{lem:IndepSpacingRelRank}
Let $\sigma_{1},\sigma_2,\ldots$ be a sequence of i.i.d.\ random variables with continuous distribution.
For $N\in\N$, let $\sigma_{1,N}\geq \sigma_{2,N} \geq \ldots \geq \sigma_{N,N}$ be the $N$th order statistics.
Let $a>0$ and $i,j,k,l,m\in\N$ with $N>j>i$ as well as $N>l>m>k$.
Then the events $\left\{ \sigma_{i,N}\ - \sigma_{j,N} \leq a\right\}$ and $\left\{ \sigma_{k,l} > \sigma_{k,m} \right\}$ are independent.
\end{lem}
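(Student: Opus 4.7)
The plan is to invoke the classical decoupling between the order statistics and the induced rank permutation of a continuous i.i.d.\ sample, and then to show that the first event is measurable with respect to the order statistics while the second event is measurable with respect to the rank permutation.

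First I will set up the rank permutation: since the common distribution of $\sigma_1,\ldots,\sigma_N$ is continuous, almost surely there are no ties, so there is a unique random $\Pi\in S_N$ with $\sigma_{r,N}=\sigma_{\Pi(r)}$ for $r=1,\ldots,N$. Symmetry of the joint law of $(\sigma_1,\ldots,\sigma_N)$ under coordinate permutations then yields, by a short computation (partitioning the sample space into the $N!$ simplices on which $\Pi$ is constant), that $\Pi$ is uniform on $S_N$ and independent of $(\sigma_{1,N},\ldots,\sigma_{N,N})$. The event $\{\sigma_{i,N}-\sigma_{j,N}\leq a\}$ is trivially measurable with respect to $\sigma(\sigma_{1,N},\ldots,\sigma_{N,N})$, so this direction is essentially free.

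The nontrivial step is to rewrite the second event in rank-theoretic terms. Since adjoining more observations can only weakly increase the $k$th largest value, we have $\sigma_{k,l}\geq\sigma_{k,m}$ always. Counting the elements of $\{\sigma_1,\ldots,\sigma_l\}$ that strictly exceed $\sigma_{k,m}$ yields $(k-1)+X$, where $X$ is the number of indices $i\in\{m+1,\ldots,l\}$ with $\sigma_i>\sigma_{k,m}$; hence $\sigma_{k,l}>\sigma_{k,m}$ iff $X\geq 1$, that is, iff at least one of the ``new'' indices in $\{m+1,\ldots,l\}$ sits among the top $k$ of $\{\sigma_1,\ldots,\sigma_l\}$. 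This is a pure statement about the relative ranking of $\sigma_1,\ldots,\sigma_l$ and is therefore measurable with respect to $\sigma(\Pi)$.

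Once these two measurability statements are in hand, independence follows at once from the independence of $\Pi$ and the order statistics. I do not foresee any serious obstacle; the only step requiring any care is the rank rewrite above, and that is immediate from the monotonicity $\sigma_{k,l}\geq\sigma_{k,m}$.
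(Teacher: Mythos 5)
Your proposal is correct and follows essentially the same route as the paper: both arguments rest on the classical fact that, for a continuous i.i.d.\ sample, the rank permutation is uniform and independent of the vector of order statistics, with $\left\{ \sigma_{i,N}-\sigma_{j,N}\leq a\right\}$ measurable with respect to the order statistics and $\left\{ \sigma_{k,l}>\sigma_{k,m}\right\}$ measurable with respect to the ranking. Your explicit rank rewrite of the record event (via the count $X$ of new indices exceeding $\sigma_{k,m}$) is a correct and slightly more detailed version of the paper's one-line observation that this event is ``univocally given by the ordering.''
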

\begin{proof}
  As in the proof of \cite[Proposition 4.3]{Resnick1987}, we observe that since the distribution of the $\sigma$'s is continuous, we can assume without loss of generality that there are no ties between the $\sigma$'s and we observe that each of the $N!$ orderings $\sigma_{q_1}<\ldots<\sigma_{q_N}$ is equally likely where $q_1,\ldots,q_N$ is a permutation of $1,\ldots,N$.
  Now we note that whether or not $\left\{ \sigma_{k,l} > \sigma_{k,m} \right\}$ is univocally given by the specific ordering $(q_1,\ldots,q_N)$.
  On the other hand, the difference between any two order statistics $\sigma_{i,N}$ and $\sigma_{j,N}$ is completely independent of the ordering $(q_1,\ldots,q_N)$.
\end{proof}

\subsection{Extreme value analysis}
\label{subsec:Extreme}
In what follows, we let $\pi_{1,B_n}\leq \pi_{2,B_n} \leq \ldots \leq \pi_{|B_n|, B_n}$ denote the order statistics of the set $\left\{ \pi_x \colon x\in B_n\right\}$.

\begin{lem}[Quotient of order statistics]
\label{lem:QuotOrder}
  Let the assumptions of Theorem \ref{thm:Loc} be true and let $\eps>0$ and $k\in\N$.
  Then \Pas for $n$ large enough
  \begin{align}
    1-\frac{\pi_{k,B_n}}{\pi_{k+1,B_n}} > n^{-\eps}\, .
  \end{align}
\end{lem}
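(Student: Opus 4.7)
The plan is to reduce the statement, via the decomposition from Section \ref{subsec:decomp}, to a Borel--Cantelli argument on an i.i.d.\ sequence of copies of $\chi$. First, the argument of Lemma \ref{lem:Loc:In} extends verbatim from $k$ to $k+1$ sites (since the relevant $\Lambda$ function still diverges faster than $\log\log n$); this gives that $\Pr$-a.s.\ for $n$ large the sites $z_{(1,n)},\ldots,z_{(k+1,n)}$ realizing the $k+1$ smallest values of $\pi$ in $B_n$ form a $2(k+1)$-sparse set. Lemma \ref{lem:decompV} then produces a (random) shift $v=v(n,\omega)\in B_{k+1}$ for which all these sites lie in $\mathscr{V}\circ\tau_v$ and, by sparsity, in $k+1$ distinct $\mathscr{N}$-sub-boxes; consequently $\pi_{k,B_n}$ and $\pi_{k+1,B_n}$ equal the $k$-th and $(k+1)$-th smallest elements of the i.i.d.\ family $(\chi_{v+y})_y$ indexed by the sublattice points $y\in(2k+3)\Z^d$ with $y+v+\mathscr{N}\subseteq B_n$ (boundary-straddling $\mathscr{N}$-boxes being absorbed by a slight enlargement of $B_n$). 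Since $v$ takes only the $(2k+3)^d$ values in $B_{k+1}$, a union bound reduces the claim to the analogous statement for each fixed i.i.d.\ sequence.

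Fix $v$, write $\chi^{(v)}_{(j)}$ for the $j$-th order statistic and set $U_{(j)}:=F_\chi(\chi^{(v)}_{(j)})$; these are uniform order statistics by Lemma \ref{lem:DistrChiCont}. The sub-multiplicativity $F(ab)\geq bF(a)$ in Theorem \ref{thm:Loc} together with Lemma \ref{lem:InequDistrPi} yields $F_\pi((1-n^{-\eps})a)\geq(1-n^{-\eps})^{2d}F_\pi(a)$ for $a\leq a^\ast$; combined with $F_\chi\sim (2k+2)^d F_\pi$ from Lemma \ref{lem:DistrChi} and the decay of the correction $F(a)^{4d-1}/F_\pi(a)\sim a^{(2d-1)\gamma}$, which is $o(n^{-\eps})$ at the relevant scale $a\sim F_\chi^{-1}(1/n^d)$ thanks to $d\geq 2$, this promotes to $F_\chi((1-n^{-\eps})a)/F_\chi(a)\geq 1-3dn^{-\eps}$. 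The bad event $\{\chi^{(v)}_{(k)}/\chi^{(v)}_{(k+1)}\geq 1-n^{-\eps}\}$ is therefore contained in $\{U_{(k)}/U_{(k+1)}\geq 1-3dn^{-\eps}\}$, and since conditionally on $U_{(k+1)}$ the ratio $U_{(k)}/U_{(k+1)}$ is the maximum of $k$ independent $\mathrm{Unif}[0,1]$ random variables, this probability is at most $3dk\,n^{-\eps}$.

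A direct sum of $n^{-\eps}$ is not summable, so I would couple the estimate with the observation that $(\chi^{(v)}_{(k)},\chi^{(v)}_{(k+1)})$ changes in $n$ only at \emph{shell-change} times, where some new sublattice point $y$ with $y+v+\mathscr{N}\subseteq B_n\setminus B_{n-1}$ has $\chi_{v+y}$ among the $k+1$ smallest seen so far. The shell contains $O(n^{d-1})$ sublattice points out of $\sim n^d$ total, so the shell-change probability is $O(1/n)$; crucially, shell change is purely a \emph{relative-rank} event, and by the standard independence of ranks and values for exchangeable i.i.d.\ samples it is independent of the order statistics, hence of the bad event. This gives $\Pr[\text{shell change and bad at }n]=O(n^{-1-\eps})$, summable, so by the first Borel--Cantelli lemma $\Pr$-a.s.\ only finitely many shell changes are bad. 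Between shell changes the ratio is constant while $n^{-\eps}$ decreases, so the inequality ``ratio $<1-n^{-\eps}$'' propagates from each good shell-change time to all subsequent $n$ until the next shell change; a union bound over the $(2k+3)^d$ fixed shifts $v\in B_{k+1}$ then delivers the claim for the random realized shift, i.e.\ for $\pi_{k,B_n}/\pi_{k+1,B_n}$ itself. The most delicate step, which I expect to require the most care, is the promotion in the second paragraph of the sub-multiplicative inequality from $F_\pi$ to $F_\chi$ with remainder genuinely $o(n^{-\eps})$, which is precisely where the dimensional restriction $d\geq 2$ enters.
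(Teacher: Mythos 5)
Your proposal follows essentially the same route as the paper's proof: reduction to the i.i.d.\ family $(\chi_y)$ via the sublattice decomposition and a union bound over the $(2k+3)^d$ shifts in $B_{k+1}$, the $O(n^{-\eps})$ bound for the fixed-$n$ bad event via uniform/exponential order statistics combined with the sub-multiplicativity of $F$ and Lemmas \ref{lem:DistrChi}--\ref{lem:InequDistrPi}, and the restoration of summability through the $O(1/n)$ probability of a rank change in the boundary shell, which is independent of the spacing event exactly as in Lemma \ref{lem:IndepSpacingRelRank}. The one point where the paper is more careful is the identification of $\pi_{k,B_n},\pi_{k+1,B_n}$ with order statistics of the $\chi$'s: since the $\mathscr{N}$-boxes overflow into $B_{n+2k+1}$, these values are only shown (via an extra $O(n^{-2})$ Borel--Cantelli estimate) to lie among $\chi^{(x)}_{(k,n)},\ldots,\chi^{(x)}_{(k+2,n)}$, so one must bound the maximum over the few consecutive ratios rather than a single one --- a harmless adjustment to your argument.
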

\begin{proof}
Without loss of generality we may and will assume that $\eps<1$.
For $l\in\N$ let $z_{(l,n)}$ be the site in $B_n$ that fulfills $\pi_{z_{(l,n)}} = \pi_{l,B_n}$.
Note that since $F$ is continuous, the sites $z_{(l,n)}$ are \Pas unique.

Let us recall that we assumed that one of the two following cases occurs: $\gamma\in (0,1/4)$ or $\gamma=0$ and there exists $\eps_1\in (0,1)$ such that the product $n^{2+\eps_1} g(n)$ converges monotonically to zero as $n$ grows to infinity.

In the case where $\gamma>0$, it follows that $(1/F(1/s))^2$ varies regularly at infinity with index $2\gamma$.
Further, $(1/F(1/s))^2$ diverges as $s\to\infty$.
It follows by virtue of \cite[Prop. 0.8(v)]{Resnick1987} that $1/g(u) = \inf\,\left\{ s\geq 0\colon (1/F(1/s))^2 = u \right\}$ varies regularly at infinity with index $1/(2\gamma)$ and thus $g$ varies regularly at infinity with index $-1/(2\gamma)$.
Since in addition $\gamma<1/4$, there exists $\eps_1\in (0,1)$ such that $-1/(2\gamma) < -(2+\eps_1)$.

In both cases we define $\D_n$ as in Definition \ref{def:InDn} with $\eps= \eps_2=\frac{7\eps_1}{8(2+\eps_1)}$.
Let $\mathscr{I}_n = B_n\backslash \D_n$.
By virtue of Lemma \ref{lem:Loc:In} and Remark \ref{rem:b1b2sparse} we know that \Pas for $n$ large enough the set $\mathscr{I}_n$ is $(2k+3)$-sparse in the sense of Definition \ref{def:sparse} and further $z_{(1,n)},\ldots, z_{(k+1,n)}\in\mathscr{I}_n$.
It follows that \Pas for $n$ large enough there is no pair of neighbors among the the sites $z_{(1,n)}, \ldots, z_{(k+1,n)}$.
In the proof of Lemma \ref{lem:Loc:In}, we also find that \Pas for $n$ large enough $\pi_{z_{(1,n)}}\leq \ldots\leq \pi_{z_{(k+1,n)}} < 2d g(n^{1-\eps_2/2})$, which is eventually smaller than $g(n^{1-\eps_2})$.
Moreover, Lemma \ref{lem:Loc:In} yields that \Pas for $n$ large enough and for all $z\in B_{n+2k+3}$ the edge set $\mathfrak{E}(B_{2k+3})$ contains at most $3d-1$ edges with conductance less than or equal to $g(n^{1-\eps_2})$.
Further, recall the definitions of Section \ref{subsec:decomp} and let $\chi_{(1,n)}^{(x)} \leq \chi_{(2,n)}^{(x)} \leq \ldots$ be the order statistics of $\left\{ \chi_y \colon y\in ((2k+3)\Z^d + x), \mathscr{N}\circ \tau_y \subset B_{n+2k+1} \right\}$.
Since for any $j$ and any $x\in B_{k+1}$, the random variable $\chi_{(j,n)}^{(x)}$ \Pas decreases monotonically to zero as $n$ tends to infinity, it follows that \Pas for $n$ large enough $\chi_{(k+2,n)}^{(x)} \leq a^\ast$ with $a^\ast$ from the assumptions of the theorem.
In summary, the event
\begin{align}
  G_n &:= \left\{ z_{(1,n)}, \ldots, z_{(k+1,n)} \in\mathscr{I}_n\right\} \cap \left\{ \mathscr{I}_n \text{ $(2k+3)$-sparse}\right\} \cap \left\{ \max_{x\in B_{k+1}} \chi_{(k+2,n)}^{(x)} \leq a^\ast \right\}\nonumber\\
  &\qquad\cap \left\{ \pi_{z_{(1,n)}}< \ldots< \pi_{z_{(k+1,n)}} < g(n^{1-\eps_2}) \right\}\nonumber\\
  &\qquad\cap\left\{ \forall z\in B_{n+2k+3}\colon \left|\left\{ e\in\mathfrak{E}(B_{2k+3} (z))\colon w_e \leq g(n^{1-\eps_2})\right\}\right|\leq 3d-1 \right\}
  \label{equ:GnPas}
\end{align}
occurs \Pas for $n$ large enough.

We abbreviate $E_n = \left\{ n^{-\eps} \geq 1 - \frac{\pi_{k,B_n}}{\pi_{k+1,B_n}} \right\}$.
Since $\Pr \left[ \limsup_{n\to \infty} G_n^{\rm c} \right] = 0$, we already know that
\begin{align*}
  \Pr &\left[ \limsup_{n\to\infty} E_n\right]
  = \Pr \left[ \limsup_{n\to \infty}\,  \left( E_n \cap G_n \right) \right]\\
  &\qquad= \lim_{n\to\infty}\Pr \left[ \left(E_n \cap G_n\right) \cup \left(E_{n+1} \cap G_{n+1}\right) \cup \bigcup_{m= n+2}^\infty \left( E_m \cap G_m \cap \left( E_{m-2} \cap G_{m-2} \right)^{\rm c} \right) \right]\, .
\end{align*}
Since again $\Pr \left[ \limsup_{m\to \infty} G_m^{\rm c} \right] = 0$, we further conclude that
\begin{align}
  \Pr \left[ \limsup_{n\to\infty} E_n\right]&\leq (2k+3)\limsup_{n\to\infty}\Pr \left[ E_n \cap G_n \right] + \lim_{n\to\infty}\Pr \left[ \bigcup_{m= n+2k+3}^\infty \left( E_m \cap G_m \cap E_{m-2k-3}^{\rm c}\right) \right]\, .
  \label{equ:ProofLoc:Split}
\end{align}
We now treat the first and second term on the above RHS separately.

Let us first show that
\begin{align}
  \limsup_{n\to\infty}\Pr \left[ E_n \cap G_n \right] = 0\, .
  \label{equ:ProofLoc:Limsup}
\end{align}
Our aim is to use the notation of Section \ref{subsec:decomp} in order to decompose
\begin{align*}
  E_n \cap G_n &= \bigcup_{x\in B_{k+1}} \left( E_n \cap G_n\cap \left\{ \{z_{(1,n)},\ldots, z_{(k+1,n)}\} \subset \mathscr{V}\circ \tau_x \right\} \right)
\end{align*}
and to compare $\pi_{z_{(1,n)}}, \ldots, \pi_{z_{(k+1,n)}}$ with the $k+1$ smallest values of $\chi_y$, $y\in \{ y'\in ((2k+3)\Z^d + x)\colon \mathscr{N}\circ\tau_{y'} \cap B_n \neq \emptyset\}$ for an $x\in B_{k+1}$.
Let $x\in B_{k+1}$.
There is a technical difficulty since
\begin{align}
  \text{it is {\bfseries not} necessarily}\quad \bigcup_{y'\in ((2k+3)\Z^d + x), \atop\mathscr{N}\circ\tau_{y'} \cap B_n \neq \emptyset} \mathscr{N}\circ\tau_{y'} \subseteq B_n\, .
\end{align}
but instead
\begin{align}
  \bigcup_{y'\in ((2k+3)\Z^d + x), \atop\mathscr{N}\circ\tau_{y'} \cap B_{n} \neq \emptyset} \mathscr{N}\circ\tau_{y'} \subseteq B_{n+2k+1}\, .
\end{align}

Now we recall that the order statistics of $\left\{ \chi_y \colon y\in ((2k+3)\Z^d + x), \mathscr{N}\circ \tau_y \subset B_{n+2k+1} \right\}$ is denoted by $\chi_{(1,n)}^{(x)} \leq \chi_{(2,n)}^{(x)} \leq \ldots$.
Then on any event $G_n \cap \left\{ \left\{ z_{(1,n)}, \ldots, z_{(k,n)}\right\} \subset \mathscr{V}\circ \tau_x \right\}$, it follows that
\begin{align*}
  \pi_{z_{(k,n)}} &\geq \chi_{(k,n)}^{(x)}\, .
\end{align*}
Let us now show that on the event $\left\{ \{z_{(1,n)},\ldots ,z_{(k+1,n)}\} \subset \mathscr{V}\circ \tau_x \right\}$ \Pas for $n$ large enough $\pi_{z_{(k+1,n)}} \leq \chi_{(k+2,n)}^{(x)}$.
We assume the counter event, i.e., that $\pi_{z_{(k+1,n)}} > \chi_{(k+2,n)}^{(x)}$.
It follows that on the event $G_n\cap \left\{ \{z_{(1,n)},\ldots ,z_{(k+1,n)}\} \subset \mathscr{V}\circ \tau_x \right\}$ there exists two sites $z^\ast_1, z^\ast_2 \in \left( B_{n+2k+1}\backslash B_n \right)\cap \mathscr{V}\circ\tau_x$ with $\pi_{z_1^\ast}<\pi_{z_{(k+1,n)}}$ and $\pi_{z_2^\ast}<\pi_{z_{(k+1,n)}}$.
Since on $G_n$ each edge set $\mathfrak{E}(\mathscr{N}\circ \tau_y)$ with $y\in B_{n+2k+3}$ contains at most $3d-1$ edges with conductance less than or equal to $g(n^{1-\eps_2})$ and further $\pi_{z_{(k+1,n)}} < g(n^{1-\eps_2})$, it follows that for $n$ large enough, these two sites have to be located in different cubes $\mathscr{N}\circ y^\ast_1$ and $\mathscr{N}\circ y^\ast_1$ (with $y^\ast_1, y^\ast_2 \in (2k+3)\Z^d +x$).
Thus $\chi_{y^\ast_1}$ and $\chi_{y^\ast_2}$ are new records in the sense that both $\chi_{y^\ast_1}<\chi_{(k+1,n-2k-3)}^{(x)}$ and $\chi_{y^\ast_2}<\chi_{(k+1,n-2k-3)}^{(x)}$.
Now we observe that the cardinality of the set $\{ y'\in ((2k+3)\Z^d + x)\colon \mathscr{N}\circ\tau_{y'} \cap B_n \neq \emptyset, \mathscr{N}\circ\tau_{y'} \cap B_{n-2k-3} = \emptyset\}$ is of order $n^{d-1}$.
Further, by virtue of \cite[Proposition 4.3]{Resnick1987}, the probability that one specific value $\chi_{y^\ast}$ with $y^\ast\in \{ y'\in ((2k+3)\Z^d + x)\colon \mathscr{N}\circ\tau_{y'} \cap B_n \neq \emptyset, \mathscr{N}\circ\tau_{y'} \cap B_{n-2k-3} = \emptyset\}$ fulfills $\chi_{y^\ast}<\chi_{(k+1,n-2k-3)}^{(x)}$, is of order $n^{-d}$.
It follows that the probability of the event $\pi_{z_{(k+1,n)}} > \chi_{(n,k+2)}^{(x)}$ is of order $\left(n^{d-1}/n^d\right)^2 = n^{-2}$ and thus the claim follows by the Borel-Cantelli lemma.

Since $\{z_{(1,n)},\ldots, z_{(k+1,n)}\} \subset \mathscr{V}\circ \tau_x$ implies that
\begin{align*}
 \pi_{z_{(1,n)}},\ldots, \pi_{z_{(k+1,n)}} \in \left\{ \chi_y \colon y\in ((2k+3)\Z^d + x), \mathscr{N}\circ \tau_y \subset B_{n+2k+1} \right\}\, ,
\end{align*}
it follows that both values $\pi_{z_{(k,n)}}$ and $\pi_{z_{(k+1,n)}}$ are in $\left\{ \chi_{(k,n)}^{(x)}, \chi_{(k+1,n)}^{(x)}, \chi_{(k+2,n)}^{(x)}\right\}$.
Thus
\begin{align}
  \frac{\pi_{z_{(k,n)}}}{\pi_{z_{(k+1,n)}}} \;\leq\; \max_{k\leq i<j\leq k+2}\; \frac{\chi_{(i,n)}^{(x)}}{\chi_{(j,n)}^{(x)}}
\end{align}

Now we define $F_\chi$ as in Lemma \ref{lem:DistrChi}.
By definition, $F_\chi$ is an increasing function and thus on the event $G_n \cap \left\{ z_{(1,n)},\ldots, z_{(k+1,n)} \in \mathscr{V}\circ \tau_x \right\}$ it follows that 
\begin{align*}
  \Biggl( n^{-\eps} > 1 - \frac{\pi_{z_{(k,n)}}}{\pi_{z_{k+1,n}}} \Biggr)
  \text{ implies } \Biggl( \exists\, k\leq i<j\leq k+2 \,  \colon  F_\chi \left( \chi_{(i,n)}^{(x)} \right) \geq F_\chi \left( (1-n^{-\eps}) \chi_{(j,n)}^{(x)}\right)\Biggr)\, .
\end{align*}

Our next aim is to extract the factor $(1-n^{-\eps})$ from inside the function argument of $F_\chi$.
Since on the event $G_n$ we have $\chi_{(j,n)}^{(x)}\leq a^\ast$ for all $j\leq k+2$, we estimate by virtue of Lemmas \ref{lem:DistrChi} and \ref{lem:InequDistrPi}
\begin{align*}
  F_\chi &\left( (1-n^{-\eps}) \chi_{(j,n)}^{(x)}\right) \mspace{-2mu}\stackrel{\ref{lem:DistrChi}}{\geq} \mspace{-4mu}(2k+2)^d F_\pi \left((1-n^{-\eps}) \chi_{(j,n)}^{(x)}\right) \mspace{-2mu}-\mspace{-2mu} \binom{(2k+2)^d}{2}F\left( (1-n^{-\eps}) \chi_{(j,n)}^{(x)}\right)^{4d-1}\\
  &\qquad\qquad\qquad\stackrel{\ref{lem:InequDistrPi}}{\geq} (1-n^{-\eps})^{2d} (2k+2)^d F_\pi \left( \chi_{(j,n)}^{(x)}\right) - \binom{(2k+2)^d}{2}F\left( (1-n^{-\eps}) \chi_{(j,n)}^{(x)}\right)^{4d-1}\\
  &\qquad\qquad\qquad\stackrel{\ref{lem:DistrChi}}{\geq}(1-n^{-\eps})^{2d} F_\chi \left( \chi_{(j,n)}^{(x)}\right) - \binom{(2k+2)^d}{2}F\left( (1-n^{-\eps}) \chi_{(j,n)}^{(x)}\right)^{4d-1}\, .
\end{align*}
Thus, on the event $G_n \cap \left\{ \{z_{(1,n)},\ldots, z_{(k+1,n)}\} \subset \mathscr{V}\circ \tau_x \right\}$ the event $E_n$ implies that
\begin{align*}
  \exists\, k\leq i<j\leq k+2\,  \colon \frac{F_\chi \left( \chi_{(i,n)}^{(x)} \right)}{F_\chi \left( \chi_{(j,n)}^{(x)}\right)} \;\geq\; (1-n^{-\eps})^{2d} - \binom{(2k+2)^d}{2}\frac{F\left( (1-n^{-\eps}) \chi_{(j,n)}^{(x)}\right)^{4d-1}}{F_\chi \left( \chi_{(j,n)}^{(x)}\right)}\, .
\end{align*}

Now we observe that by virtue of Lemma \ref{lem:DistrChiCont}, the random variable $F_\chi \left( \chi \right)$ is uniform on $[0,1]$.
It follows that the random variable $\sigma := -\log F_\chi \left( \chi\right)$ is exponentially distributed with parameter $1$.
In analogy to the definitions above, we define $\sigma_z := -\log F_\chi \left( \chi_z\right)$, $\sigma_{(i,n)}^{(x)} := -\log \chi_{(i,n)}^{(x)}$ for $i=1,\ldots,k+2$.

Thus, we can bound $\Pr \left[ E_n\cap G_n \right]$ by
\begin{align}
  &\Pr \left[ E_n\cap G_n \right]\nonumber\\
  &\leq\mspace{-10mu} \sum_{x\in B_{k+1}} \sum_{i,j=k,\atop i<j}^{k+2} \Pr \Biggl[ \left\{ \sigma_{(i,n)}^{(x)} - \sigma_{(j,n)}^{(x)} \leq -\log \left( (1-n^{-\eps})^{2d} - \tbinom{(2k+2)^d}{2}\tfrac{F\left( (1-n^{-\eps}) \chi_{(j,n)}^{(x)}\right)^{4d-1}}{F_\chi \left( \chi_{(j,n)}^{(x)}\right)} \right)\right\}\nonumber\\
  &\qquad\qquad\qquad\qquad\qquad\qquad\qquad\cap G_n\cap \left\{ \{z_{(1,n)},\ldots, z_{(k+1,n)}\} \subset \mathscr{V}\circ \tau_x \right\} \Biggr]\nonumber\\
  &\leq \sum_{x\in B_{k+1}} \sum_{k\leq i<j\leq k+2} \Pr \Biggl[ \sigma_{(i,n)}^{(x)} - \sigma_{(j,n)}^{(x)} \leq -\log \left( (1-n^{-\eps})^{2d} - n^{-\eps} \right)\Biggr]+\nonumber\\
  &\qquad\qquad\qquad\qquad+\sum_{x\in B_{k+1}}\sum_{k\leq i<j\leq k+2} \Pr \Biggl[ \binom{(2k+2)^d}{2}\tfrac{F\left( (1-n^{-\eps}) \chi_{(j,n)}^{(x)}\right)^{4d-1}}{F_\chi \left( \chi_{(j,n)}^{(x)}\right)} >n^{-\eps}\Biggr]\, .
  \label{equ:UpperBoundOne}
\end{align}
In the first summand on the above RHS we have the difference between any pair of the $k$th to $(k+2)$th largest values of a sequence of independent exponential variables with parameter 1.
By virtue of \cite[Chapter 5, Theorem 2.3]{Devroye1986}, we know that the normalized spacings $\left\{i\cdot\left(\sigma_{(i,n)}^{(x)} - \sigma_{(n,i+1)}^{(x)}\right)\right\}_{i=k,k+1}$ are i.i.d.\ exponential variables with parameter 1.
It follows that
\begin{align}
  \sum_{k\leq i<j\leq k+2} \Pr \Biggl[ &\sigma_{(i,n)}^{(x)} - \sigma_{(j,n)}^{(x)} \leq -\log \left( (1-n^{-\eps})^{2d} - n^{-\eps} \right)\Biggr]\nonumber\\
  &\leq 3\,\Pr \Biggl[ \sigma_{(n,k+1)}^{(x)} - \sigma_{(n,k+2)}^{(x)} \leq -\log \left( (1-n^{-\eps})^{2d} - n^{-\eps} \right)\Biggr]\nonumber\\
  &= 3\left(1-{\rm e}^{(k+1)\log ((1-n^{-\eps})^{2d} - n^{-\eps})}\right) \leq 3(k+1)(2d+1) n^{-\eps}\, ,
  \label{equ:FinalEstimateProbEn}
\end{align}
which converges to zero.

For the second summand on the RHS of \eqref{equ:UpperBoundOne}, we infer since $F$ is increasing, and by virtue of Lemma \ref{lem:DistrChi}, that
\begin{align*}
  \Pr \Biggl[ \binom{(2k+2)^d}{2}&\tfrac{F\left( (1-n^{-\eps}) \chi_{(j,n)}^{(x)}\right)^{4d-1}}{F_\chi \left( \chi_{(j,n)}^{(x)}\right)} >n^{-\eps}\Biggr]
  \leq \Pr \Biggl[ \binom{(2k+2)^d}{2}n^\eps> \tfrac{F_\chi \left( \chi_{(j,n)}^{(x)}\right)}{F\left( \chi_{(j,n)}^{(x)}\right)^{4d-1}} \Biggr]\\
  &\leq \Pr \Biggl[ \binom{(2k+2)^d}{2}\left( n^\eps+1\right)> \tfrac{(2k+2)^d F_\pi \left( \chi_{(j,n)}^{(x)}\right)}{F\left( \chi_{(j,n)}^{(x)}\right)^{4d-1}}\Biggr]\, .
\end{align*}
Since $\pi$ is the sum of $2d$ independent copies of the conductance $w$, we can bound $F_{\pi}(a)\geq F\left( a/(2d) \right)^{2d}$ for all $a\geq0$.
Together with the assumption of Theorem \ref{thm:Loc} this implies that
\begin{align}
  F_{\pi}(a)\geq (2d)^{-2d} F\left( a \right)^{2d}\qquad\text{for all }a\leq a^\ast\, .
  \label{equ:LowerBoundFPi}
\end{align}
and therefore
\begin{align*}
  \Pr \Biggl[ \binom{(2k+2)^d}{2}\left( n^\eps+1\right)\mspace{-3mu}>\mspace{-3mu} \tfrac{(2k+2)^d F_\pi \left( \chi_{(j,n)}^{(x)}\right)}{F\left( \chi_{(j,n)}^{(x)}\right)^{4d-1}}\Biggr]
  \mspace{-5mu}\leq\mspace{-2mu} \Pr \Biggl[ \left( (2k+2)^d -1 \right)n^\eps \mspace{-3mu}>\mspace{-3mu} (2d)^{-2d} F\left( \chi_{(j,n)}^{(x)}\right)^{1-2d} \Biggr]
\end{align*}
where we have furthermore used that $n^\eps\geq 1$ for all $n\in\mathbb{N}$.
Since $F$ is continuous and increasing, it follows that there exists a constant $A<\infty$ such that
\begin{align*}
  \Pr \Biggl[ \left( (2k+2)^d -1 \right)n^\eps > (2d)^{-2d} F\left( \chi_{(j,n)}^{(x)}\right)^{1-2d} \Biggr]
  \leq \Pr \Biggl[ \chi_{(j,n)}^{(x)} > \inf\left\{ b\colon F(b) = A n^{-\frac{\eps}{2d-1}} \right\} \Biggr]\, .
\end{align*}
Let $\beta_n$ be the cardinality of the set $\left\{ y\in ((2k+3)\Z^d + x)\colon \mathscr{N}\circ \tau_y \subset B_{n+2k+1} \right\}$.
Then for any $a\geq 0$ and $j\in\{ k,k+1,k+2 \}$ we know that
\begin{align*}
  \Pr \left[ \chi_{(j,n)}^{(x)} > a \right] \;\;&\leq\;\; \Pr \left[ \chi_{(k+2,n)}^{(x)} > a \right]\\
  &\leq\;\;\Pr \left[ \chi > a \right]^{\beta_n} + \beta_n \Pr \left[ \chi > a \right]^{\beta_n -1} \left( 1 - \Pr \left[ \chi > a \right] \right)\\
  &\qquad + \ldots + \frac{\beta_n !}{\left( \beta_n-k\right) !}\Pr \left[ \chi > a \right]^{\beta_n -k-1} \left( 1 - \Pr \left[ \chi > a \right] \right)^{k+1}\\
  &\leq\;\; (k+2)\beta_n^{k+1} \Pr \left[ \chi > a \right]^{\beta_n -k-1}\, .
\end{align*}
By virtue of Lemma \ref{lem:DistrChi} and \eqref{equ:LowerBoundFPi} we thus obtain for all $0\leq a \leq a^\ast$ that
\begin{align*}
  \Pr \Biggl[ \chi_{(j,n)}^{(x)} > a \Biggr] &\leq (k+2)\beta_n^{k+1} \left( 1 - \left(\frac{k+1}{2d^2}\right)^d F(a)^{2d} + \binom{(2k+2)^d}{2} F(a)^{4d-1} \right)^{\beta_n -k-1}\, .\\
\end{align*}
We now insert $a = \inf\left\{ s\colon F(s) = A n^{-\frac{\eps}{2d-1}} \right\}$ and observe that there exist $C_1, C_2\in (0,\infty)$ such that $C_1 n^{d}+k+1\leq \beta_n \leq C_2 n^d$.
It follows that for $n$ large enough
\begin{align}
  \Pr \Biggl[ \chi_{(j,n)}^{(x)} > F^{-1}\left( A n^{-\frac{\eps}{2d-1}}\right) \Biggr]
  &\leq (k+2)C_2^{k+1} n^{2d} \left( 1 - \left(\frac{A^2(k+1)}{2d^2}\right)^d n^{-\frac{2d\eps}{2d-1}} \right)^{C_1 n^d}\, .
  \label{equ:UpperEstErrorTerm}
\end{align}
Since we have assumed that $\eps<1$ at the beginning of this proof, this converges to zero and is even summable.
This concludes the proof of \eqref{equ:ProofLoc:Limsup}.

Let us now treat the second term on the RHS of \eqref{equ:ProofLoc:Split}.
We split the event
\begin{align*}
  E_m = \left( E_m \cap \left\{ \pi_{2,B_m} < \pi_{2,B_{m-2k-3}} \right\}\right) \cup \left( E_m \cap \left\{ \pi_{2,B_m} = \pi_{2,B_{m-2k-3}} \right\}\right)
\end{align*}
and we observe that $E_m \cap \left\{ \pi_{2,B_m} = \pi_{2,B_{m-2k-3}} \right\} \cap E_{m-2k-3}^{\rm c} = \emptyset$.
Thus
\begin{align*}
  \lim_{n\to\infty}\Pr &\left[ \bigcup_{m= n+2k+3}^\infty \left( E_m \cap G_m \cap E_{m-2k-3}^{\rm c}\right) \right]\\
  &\qquad\qquad\qquad\leq \lim_{n\to\infty}\sum_{m= n+2k+3}^\infty \Pr \left[ E_m \cap G_m \cap \left\{ \pi_{2,B_m} < \pi_{2,B_{m-2k-3}} \right\}\right]\, .
\end{align*}
By Lemma \ref{lem:decompV} we decompose and estimate
\begin{align*}
  \Pr &\left[ E_m \cap G_m \cap \left\{ \pi_{2,B_m} < \pi_{2,B_{m-2k-3}} \right\}\right]\\
  &\qquad\leq \sum_{x\in B_{k+1}} \Pr \left[ E_m \cap G_m \cap \left\{ \pi_{2,B_m} < \pi_{2,B_{m-2k-3}} \right\}\right.\\
  &\qquad\qquad\qquad\qquad\left.\cap\left\{ \{z_{(1,m)},\ldots,z_{(k+1,m)}, z_{(1,m-2k-3)}, \ldots, z_{(k+1,m-2k-3)}\} \subset \mathscr{V} \circ \tau_x \right\}\right]
\end{align*}
Similar to the considerations above, we know that
\begin{align*}
  \pi_{z_{(k+1,m-2k-3)}}\in\left\{ \chi^{(x)}_{(k+1,m-2k-3)}, \chi^{(x)}_{(k+2,m-2k-3)}\right\}\, .
\end{align*}
Therefore we further decompose the above events by
\begin{align*}
  \Omega = \bigcup_{i=k}^{k+1} \bigcup_{j=i+1}^{k+2} \bigcup_{l=k+1}^{k+2} \left\{ \pi_{z_{(k,m)}} = \chi^{(x)}_{(i,m)}\right\} &\cap \left\{ \pi_{z_{(k+1,m)}} = \chi^{(x)}_{(j,m)}\right\}\\
  &\qquad\qquad\cap \left\{ \pi_{z_{(k+1,m-2k-3)}} = \chi^{(x)}_{(l,m-2k-3)} \right\}\, .
\end{align*}
And this we split into the three cases $l<j$, $l=j$ and $l>j$, i.e.,
\begin{align}
  \sum_{x\in B_{k+1}} \sum_{i=k}^{k+1} &\sum_{j=i+1}^{k+2} \sum_{l=k+1}^{k+2} \Pr \left[ \ldots \right]\nonumber\\
  &= \sum_{x\in B_{k+1}} \sum_{i=k}^{k+1} \left( \sum_{j=i+1}^{k+2} \sum_{l=k+1}^{j-1} \Pr \left[ \ldots \right] + \sum_{j=i+1, \atop l=j}^{k+2} \Pr \left[ \ldots \right] + \sum_{j=i+1}^{k+2} \sum_{l=j+1}^{k+2} \Pr \left[ \ldots \right]\right)
  \label{equ:FinalDecomp}
\end{align}
Let us consider the first term on the above RHS.
It has to be $j=k+2$ and $l=k+1$.
Thus it contains
\begin{align*}
  \Pr &\left[ E_m \cap G_m \cap \left\{ \{z_{(1,m)},\dots, z_{(k+1,m)}, z_{1,m-2k-3}, z_{(k+1,m-2k-3)}\} \subset \mathscr{V} \circ \tau_x \right\}\cap\right.\\
  &\qquad\qquad\cap \left\{ \pi_{k+1,B_m} < \pi_{k+1,B_{m-2k-3}} \right\} \cap \left\{ \pi_{z_{(k,m)}} = \chi^{(x)}_{(i,m)}\right\}\\
  &\qquad\qquad\left.\cap\, \left\{ \pi_{z_{(k+1,m)}} = \chi^{(x)}_{(k+2,m)}\right\} \cap \left\{ \pi_{z_{(k+1,m-2k-3)}} = \chi^{(x)}_{(k+1,m-2k-3)} \right\}\right]\\
  &\leq \Pr \left[ \left\{ \pi_{k+1,B_m} < \pi_{k+1,B_{m-2k-3}} \right\} \cap \left\{ \pi_{z_{(k+1,m)}} = \chi^{(x)}_{(k+2,m)}\right\}\right.\\
  &\qquad\qquad\left.\cap\, \left\{ \pi_{z_{(k+1,m-2k-3)}} = \chi^{(x)}_{(k+1,m-2k-3)} \right\}\right]\\
  &\leq \Pr \left[ \chi^{(x)}_{(k+2,m)} < \chi^{(x)}_{(k+1,m-2k-3)} \right]\, .
\end{align*}
If $\chi^{(x)}_{(k+2,m)} < \chi^{(x)}_{(k+1,m-2k-3)}$, then there are at least two sites
\begin{align*}
  y^\ast_1, y^\ast_2\in\left\{ y\in ((2k+3)\Z^d + x) \colon \mathscr{N}\circ\tau_{y} \cap B_{k} \neq \emptyset, \mathscr{N}\circ\tau_{y} \cap B_{m-2k-3} = \emptyset \right\}
\end{align*}
such that $\chi_{y^\ast_1}, \chi_{y^\ast_2} < \chi^{(x)}_{(k+1,m-2k-3)}$.
Since the cubes $\mathscr{N}\circ\tau_{y}$, $y\in ((2k+3)\Z^d + x)$, are disjoint, the probability that this happens is of order $m^{-2}$, see e.g. the considerations above and \cite[Proposition 4.3]{Resnick1987}.
This implies that the sum over $m$ is finite.

Now we consider the second term on the RHS of \eqref{equ:FinalDecomp}.
It contains
\begin{align*}
  \sum_{j=i+1}^{k+2} \Pr &\left[ E_m \cap G_m \cap \left\{ z_{(1,m)},\ldots, z_{(k+1,m)}, z_{(1,m-2k-3)}, z_{(k+1,m-2k-3)}\} \in \mathscr{V} \circ \tau_x \right\}\right.\\
  &\qquad\qquad\cap\, \left\{ \pi_{k+1,B_m} < \pi_{k+1,B_{m-2k-3}} \right\} \cap \left\{ \pi_{z_{k,m}} = \chi^{(x)}_{(i,m)}\right\}\\
  &\qquad\qquad\left.\cap \left\{ \pi_{z_{(k+1,m)}} = \chi^{(x)}_{(j,m)}\right\} \cap \left\{ \pi_{z_{(k+1,m-2k-3)}} = \chi^{(x)}_{(j,m-2k-3)} \right\}\right]\\
  &\leq \sum_{j=i+1}^{k+2} \Pr \left[ \left\{m^{-\eps} > 1 - \frac{\chi^{(x)}_{(i,m)}}{\chi^{(x)}_{(j,m)}}\right\} \cap \left\{ \chi^{(x)}_{(j,m)} < \chi^{(x)}_{(j,m-2k-3)} \right\}\right]\\
  &\leq \sum_{j=i+1}^{k+2} \Pr \left[ \left\{\sigma_{(i,m)}^{(x)} - \sigma_{(j,m)}^{(x)} \leq -\log \left( (1-m^{-\eps})^{2d} - m^{-\eps} \right)\right\} \cap \left\{ \sigma^{(x)}_{(j,m)} > \sigma^{(x)}_{(j,m-2k-3)} \right\}\right]\\
  &\qquad\qquad + \sum_{j=i+1}^{k+2} \Pr \left[ \binom{(2k+2)^d}{2}\tfrac{F\left( (1-m^{-\eps}) \chi_{(j,m)}^{(x)}\right)^{4d-1}}{F_\chi \left( \chi_{(j,m)}^{(x)}\right)} >m^{-\eps} \right]
  \, .
\end{align*}
where we have applied the same considerations as for \eqref{equ:UpperBoundOne}.
The second summand on the above RHS is already summable over $m$ as we have shown in \eqref{equ:UpperEstErrorTerm}.
For the first term we recall that the $\sigma^{(x)}_z$ are independent exponential random variables and by virtue of Lemma \ref{lem:IndepSpacingRelRank} the two events $\left\{\sigma_{(i,m)}^{(x)} - \sigma_{(j,m)}^{(x)} \leq -\log \left( (1-m^{-\eps})^{2d} - m^{-\eps} \right)\right\}$ and
$\left\{\sigma^{(x)}_{(j,m)} > \sigma^{(x)}_{(j,m-2k-3)}\right\}$ are independent.
The probability of the event $\left\{\sigma_{(i,m)}^{(x)} \mspace{-3mu}- \sigma_{(j,m)}^{(x)} \leq -\log \left( (1-m^{-\eps})^{2d} \mspace{-3mu}- m^{-\eps} \right)\right\}$ is of order $m^{-\eps}$, see \eqref{equ:FinalEstimateProbEn}, whereas the probability of the event $\left\{\sigma^{(x)}_{(j,m)} > \sigma^{(x)}_{(j,m-2k-3)}\right\}$ is of order $m^{-1}$, see e.g.\ \cite[Proposition 4.3]{Resnick1987}.
It follows that the second term on the RHS of \eqref{equ:FinalDecomp} is summable over $m$ as well.

Now we consider the third term on the RHS of \eqref{equ:FinalDecomp}.
Here, $i=k$, $j=k+1$ and $l=k+2$.
It follows that there exists a site $z^\ast\in\left(B_{m-2}\backslash B_{m-2k-3}\right)\cap \mathscr{V}\circ\tau_x$ such that $\pi_{z^\ast}<\pi_{z_{(k+1,m-2k-3)}}$.
Thus, the cube $\mathscr{N}\circ \tau_y$ with $y\in(2k+3)\Z^d+x$ that contains this site $z^\ast$, is associated with a $\chi_y$ that is a new record in the sense that $\chi_y=\chi^{(x)}_{(\kappa,m-2k-3)}$ with $\kappa\in\{ k,k+1 \}$ and $\chi^{(x)}_{(\kappa,m-2k-3)}<\chi^{(x)}_{(\kappa,m-4k-6)}$.
Since further $\left\{ \chi^{(x)}_{(k,m-2k-3)}<\chi^{(x)}_{(k,m-4k-6)}\right\} \subset \left\{ \chi^{(x)}_{(k+1,m-2k-3)}<\chi^{(x)}_{(k+1,m-4k-6)}\right\}$, we arrive at
\begin{align*}
  \Pr &\left[ E_m \cap G_m \cap \left\{ z_{(1,m)},\ldots, z_{(k+1,m)}, z_{(1,m-2k-3)}, z_{(k+1,m-2k-3)}\} \in \mathscr{V} \circ \tau_x \right\}\right.\\
  &\qquad\qquad\qquad\qquad\cap\, \left\{ \pi_{k+1,B_m} < \pi_{k+1,B_{m-2k-3}} \right\} \cap \left\{ \pi_{z_{(k,m)}} = \chi^{(x)}_{(k,m)}\right\}\\
  &\qquad\qquad\qquad\qquad\left.\cap\, \left\{ \pi_{z_{(k+1,m)}} = \chi^{(x)}_{(k+1,m)}\right\} \cap \left\{ \pi_{z_{(k+1,m-2k-3)}} = \chi^{(x)}_{(k
  +2,m-2k-3)} \right\}\right]\\
  &\qquad\qquad\leq 2\Pr \left[ \left\{m^{-\eps} > 1 - \frac{\chi^{(x)}_{(k,m)}}{\chi^{(x)}_{(k+1,m)}}\right\}\cap \left\{ \chi^{(x)}_{(k+1,m-2k-3)} < \chi^{(x)}_{(k+1,m-4k-6)} \right\}\right]\\
  &\qquad\qquad\leq 2\Pr \left[ \left\{\sigma_{(k,m)}^{(x)} - \sigma_{(k+1,m)}^{(x)} \leq -\log \left( (1-m^{-\eps})^{2d} - m^{-\eps} \right)\right\}\right.\\
  &\qquad\qquad\qquad\qquad\qquad\qquad\qquad\qquad\qquad\qquad\left.\cap\, \left\{ \sigma^{(x)}_{(k+1,m-2k-3)} > \sigma^{(x)}_{(k+1,m-4k-6)} \right\}\right]\\
  &\qquad\qquad\qquad\qquad + 2\Pr \left[ \binom{(2k+2)^d}{2}\tfrac{F\left( (1-m^{-\eps}) \chi_{(k+1,m)}^{(x)}\right)^{4d-1}}{F_\chi \left( \chi_{(k+1,m)}^{(x)}\right)} >m^{-\eps} \right]\, .
\end{align*}
Both summands are summable over $m$ by the same considerations as above.
We thus conclude the proof.
\end{proof}

\subsection{Proof of Theorem \ref{thm:Loc}}
\label{subsec:ProofLoc}
Setting $k=1$, we make exactly the same observations and definitions as at the beginning of the proof of Lemma \ref{lem:QuotOrder} until \eqref{equ:GnPas} where we have defined the event $G_n$.
We abbreviate $z_n = z_{(1,n)}$.

Now we let $\alpha_n = n^{-\eps_1/8}$ and note that
\begin{align}
  \left\{ \bigl\| \psi_1^{(n)}\bigr\|_{\ell^2(B_n\backslash \{ z_n\})}^2 > \alpha^2_n\right\}
  \subseteq \left\{ \bigl\| \psi_1^{(n)}\bigr\|_{\ell^2(\D_n)}^2 > \frac{\alpha^2_n}{2}\right\} \cup \left\{ \bigl\| \psi_1^{(n)}\bigr\|_{\ell^2(\mathscr{I}_n\backslash \{ z_n\})}^2 > \frac{\alpha^2_n}{2}\right\}\, .
  \label{equ:SplitEvent}
\end{align}
However, by virtue of Lemma \ref{lem:Loc:MassDn} we know that \Pas for $n$ large enough
\begin{align}
  \bigl\| \psi_1^{(n)} \bigr\|^2_{\ell^2 (\D_n)} \leq \alpha^4_n\, ,
  \label{equ:UpperBoundPsiD}
\end{align}
and thus \Pas the limit superior of the first event on the RHS of \eqref{equ:SplitEvent} vanishes.

In order to estimate the probability of the second event on the RHS of \eqref{equ:SplitEvent},
we now estimate $\bigl\| \psi_1^{(n)}\bigr\|_{\ell^2(\mathscr{I}_n\backslash \{ z_n\})}$ in terms of $\bigl\| \psi_1^{(n)}\bigr\|_{\ell^2(\D_n)}$.
By virtue of Remark \ref{rem:PF}, we can assume without loss of generality that $\psi_1^{(n)}$ nonnegative.
Let $y\in\mathscr{I}_n\backslash \{z_n\}$ and define $m_y = 2 \max_{x:x\sim y} \psi_1^{(n)}(x)$.
On the event where $\mathscr{I}_n$ is sparse, $y\nsim z_n$.
Therefore we know by virtue of Lemma \ref{lem:uniquemax} that $\psi_1^{(n)} (y) \leq m_y \left( 1 - \frac{\pi_{z_n}}{\pi_y}\right)^{-1}$.
By definition $\pi_y\geq \pi_{z_{(2,n)}}$ and thus it follows that on the event $G_n$ we have
\begin{align*}
  \bigl\| \psi_1^{(n)}\bigr\|_{\ell^2(\mathscr{I}_n\backslash \{ z_n\})}^2 \leq \left(1 - \frac{\pi_{z_n}}{\pi_{z_{(2,n)}}}\right)^{-2} \sum_{y\in \mathscr{I}_n\backslash \{ z_n\}} m_y^2\, .
\end{align*}
Moreover, on the event where $\mathscr{I}_n$ is sparse, any neighbor of $y\in\mathscr{I}_n$ is in $\D_n$ and therefore
\begin{align}
\bigl\| \psi_1^{(n)}\bigr\|^2_{\ell^2(\mathscr{I}_n\backslash \{ z_n\})} \leq 8d\, \left(1 - \frac{\pi_{z_n}}{\pi_{z_{(2,n)}}}\right)^{-2} \bigl\| \psi_1^{(n)} \bigr\|^2_{\ell^2 (\D_n)}\, .
	   \label{equ:psiInzn}
\end{align}
On the event where \eqref{equ:UpperBoundPsiD} is true and $\mathscr{I}_n$ is sparse, we hence infer that
\begin{align*}
  \left\{ \bigl\| \psi_1^{(n)}\bigr\|_{\ell^2(\mathscr{I}_n\backslash \{ z_n\})}^2 > \frac{\alpha^2_n}{2}\right\} \subseteq
  \left\{  4\sqrt{d}\,\alpha_n >1 - \frac{\pi_{z_n}}{\pi_{z_{(2,n)}}} \right\}\, .
\end{align*}

However, by virtue of Lemma \ref{lem:QuotOrder} we know that \Pas for $n$ large enough $4\sqrt{d}\,\alpha_n < 1 - \frac{\pi_{z_{n}}}{\pi_{z_{(2,n)}}}$.
The claim follows.

\subsection{Asymptotics of principal Dirichlet eigenvalue}
\label{subsec:ProofCorLoc}

\begin{proof}[Proof of Corollary \ref{cor:Loc}]
By virtue of \eqref{equ:TrivialUpperBound}, we already know that $\lambda_1^{(n)} \leq \min_{z\in B_n} \pi_z$.
By \eqref{equ:Psi1zn} we further know that there exists $\eps_1>0$ such that \Pas for $n$ large enough
\begin{align*}
  \psi_1^{(n)} (z_n)^2 \geq 1 - n^{-\eps_1/4}\, .
\end{align*}
It follows that \Pas for $n$ large enough
\begin{align}
  \lambda_1^{(n)} &= \langle \psi_1^{(n)}, \mathcal{L}_{\boldsymbol{w}} \psi_1^{(n)} \rangle \geq \sum_{x\colon x\sim z_n} w_{xz_n} \left( \psi_1^{(n)} (z_n) - \psi_1^{(n)} (x) \right)^2\nonumber\\
  &\geq \left( n^{-\eps_1/8} - \sqrt{1 - n^{-\eps_1/4}} \right)^2 \min_{z\in B_n} \pi_z
\end{align}
The claim \eqref{equ:PrincipalEigValMinPi} follows.
\end{proof}

\begin{proof}[Proof of Corollary \ref{cor:ConvInLaw}]
Because of Corollary \ref{cor:Loc} it remains to prove that
\begin{align*}
   \lim_{n\to\infty} \Pr \left[ \min_{x\in B_n}\pi_x > \frac{\zeta}{n^{\frac{1}{2\gamma}}L^\ast(n)} \right] = \exp\left( -\zeta^{2d\gamma} \right)\qquad \text{for all }\zeta\geq 0\, ,
\end{align*}
where the difficulty lies in the dependence of the random variables $\left( \pi_x \right)_{x\in B_n}$.
However, the dependence is very short-ranged since $\pi_{x_1}$ and $\pi_{x_2}$ are dependent if and only if the sites $x_1, x_2$ are neighbors.
We strongly rely on the ideas of \cite{Watson1954}, which we easily adapt to our needs.
   
In what follows, we always mean that a statement holds for all $\zeta\geq 0$ even if we do not explicitly write so.
We define
\begin{align*}
  a_n \;:=\; \left( n^{\frac{1}{2\gamma}} L^\ast (n)\right)^{-1}\;=\;\frac{1}{h (|B_n|)} = \sup\, \left\{ t\colon F_\pi (t) = |B_n|^{-1} \right\}\, .
\end{align*}
   Then $|B_n| = \left( \Pr \left[ \pi_0 \leq a_n\right] \right)^{-1}$ and thus
   \begin{align}
      \lim_{n\to\infty} |B_n| \Pr \left[ \pi_0 \leq a_n \zeta \right] = \lim_{n\to\infty} \frac{F_\pi \left( a_n \zeta \right)}{F_\pi \left( a_n \right)} \;=\;\zeta^{2d\gamma}
      \label{equ:WatsonCond1}
   \end{align}
   since $a_n\to 0$ as $n\to\infty$ and $F_\pi$ varies regularly at zero with index $2d\gamma$.
   We further note that if $\vec{e}_1\in \Z^d$ is a neighbor of the origin, then $\Pr \left[ \left\{ \pi_{0} \leq a_n \zeta\right\} \cap \left\{ \pi_{\vec{e}_1} \leq a_n \zeta\right\} \right] \leq F(a_n\zeta)^{4d-1}$ since for the event $\left\{ \pi_{0} \leq a_n \zeta\right\} \cap \left\{ \pi_{\vec{e}_1} \leq a_n \zeta\right\}$ at least $4d-1$ independent conductances $w$ have to be smaller than or equal to $a_n \zeta$.
   Since $F$ varies regularly at zero with index $\gamma$, it follows that   
   \begin{align}
      |B_n|\, \Pr \left[ \left\{ \pi_{0} \leq a_n \zeta\right\} \cap \left\{ \pi_{\vec{e}_1} \leq a_n \zeta\right\} \right]  \to 0\qquad\text{as }n\to \infty\, .
      \label{equ:WatsonCond2}
   \end{align}

   Now for any even integer $l\leq |B_n|$ we estimate
   \begin{align}
     1 - &\sum_{m=1}^{l-1}(-1)^{m-1}\sum_{A \subset B_n,\atop |A|=m} \Pr \left[ \bigcap_{x\in A}\left\{ \pi_{x} \leq a_n\zeta\right\} \right]\nonumber\\
     &\mspace{50mu}\leq\; \Pr \left[ \min_{x\in B_n}\pi_x > a_n\zeta\right]
     \leq\; 1 - \sum_{m=1}^{l}(-1)^{m-1}\sum_{A \subset B_n,\atop |A|=m} \Pr \left[ \bigcap_{x\in A}\left\{ \pi_{x} \leq a_n\zeta\right\} \right]
     \label{equ:Series}
   \end{align}
   The first term in the above sums over $m$, i.e.\, $\sum_{x\in B_n}\Pr \left[ \pi_x \leq a_n\zeta \right]$, is equal to $|B_n| \Pr \left[ \pi_0 \leq a_n\zeta \right]$ and converges to $\zeta^{2d\gamma}$.
   For the second term in the sums over $m$ in \eqref{equ:Series} we observe that
   \begin{align}
      &\sum_{\{ x_1,x_2\}\subset B_n,\atop x_1\neq x_2} \Pr \left[ \left\{ \pi_{x_1} \leq a_n\zeta\right\} \cap \left\{ \pi_{x_2} \leq a_n\zeta\right\} \right]\nonumber\\
      &\mspace{25mu}=\mspace{-10mu} \sum_{\{ x_1,x_2\}\subset B_n, \atop x_1\neq x_2,\, x_1\nsim x_2}\mspace{-10mu} \Pr \left[ \left\{ \pi_{x_1} \leq a_n\zeta\right\} \cap \left\{ \pi_{x_2} \leq a_n\zeta\right\} \right]
      + \mspace{-7mu}\sum_{\{ x_1,x_2\} \subset B_n, \atop x_1\sim x_2}\mspace{-3mu} \Pr \left[ \left\{ \pi_{x_1} \leq a_n\zeta\right\} \cap \left\{ \pi_{x_2} \leq a_n\zeta\right\} \right]\, .
      \label{equ:SeriesSecondOrder}
   \end{align}
   Let $C_2^{(n)}$ be the number of combinations of distinct $x_1,x_2\in B_n$ with $x_1\nsim x_2$.
   Since $\pi_{x_1}$ and $\pi_{x_2}$ are independent if $x_1\nsim x_2$, it follows that
   \begin{align*}
      \sum_{\{ x_1,x_2\}\subset B_n, \atop x_1\neq x_2,\, x_1\nsim x_2} \Pr \left[ \left\{ \pi_{x_1} \leq a_n\zeta\right\} \cap \left\{ \pi_{x_2} \leq a_n\zeta\right\} \right]
      \;=\; C_2^{(n)}\, \Pr \left[ \pi_0 \leq a_n\zeta \right]^2\, .
   \end{align*}
   As $n$ grows to infinity, the leading term in $C_2^{(n)}$ is $\frac{1}{2}|B_n|^2$.
   It follows that
   \begin{align*}
      C_2^{(n)} \Pr \left[ \pi_0 \leq a_n\zeta \right]^2 \to \tfrac{1}{2}\,\zeta^{4d\gamma}\, .
   \end{align*}
   The second term on the RHS of \eqref{equ:SeriesSecondOrder}, however, is of order $|B_n| \Pr \left[ \left\{ \pi_{0} \leq a_n\zeta\right\} \cap \left\{ \pi_{\vec{e}_1} \leq a_n\zeta\right\} \right]$, which converges to zero by \eqref{equ:WatsonCond2}.
   
   For each $q\leq |B_n|$ the general sum $\sum_{A \subset B_n,\atop |A|=q} \Pr \left[ \bigcap_{x\in A}\left\{ \pi_{x} \leq a_n\zeta\right\} \right]$ has $\binom{|B_n|}{q}$ terms.
   Following the reasoning of \cite{Watson1954}, we note that there are asymptotically $|B_n|^q/q!$ terms in which each $x_i$ is not adjacent to any of the $x_j$'s.
   The sum over these terms yields asymptotically $\zeta^{2qd\gamma}/q!$.
   Then we have an amount of order $|B_n|^{q-1}$ terms where there exists exactly one neighbored pair $x_i \sim x_j$.
   The sum over these terms yields a constant times
   \begin{align*}
      |B_n|^{q-1} \Pr \left[ \pi_0 \leq a_n\zeta \right]^{q-2} \Pr \left[ \left\{\pi_{0} \leq a_n\zeta\right\} \cap \left\{\pi_{\vec{e}_1} \leq a_n\zeta\right\} \right]\, ,
   \end{align*}
   which converges to zero as $n$ tends to infinity by virtue of \eqref{equ:WatsonCond1} and \eqref{equ:WatsonCond2}.
   Further, we have an amount of order $|B_n|^{q-m+1}$ terms where there exists a dependence between $m\in\N$ of the $\pi_{x_i}$.
   The sum over these terms is smaller than a constant times
   \begin{align*}
      |B_n|^{q-m+1} \Pr \left[ \pi_0 \leq a_n\zeta \right]^{q-m} \Pr \left[ \left\{\pi_{0} \leq a_n\zeta\right\} \cap \left\{\pi_{\vec{e}_1} \leq a_n\zeta\right\} \right]\, ,
   \end{align*}
   which converges to zero as well.
   It follows that for any even integer $l$ we have
   \begin{align*}
      \sum_{q=0}^{l-1} \frac{(-\zeta)^{2qd\gamma}}{q!}\leq \lim_{n\to\infty} \Pr \left[ \min_{x\in B_n}\pi_x > a_n\zeta \right] \leq \sum_{q=0}^{l} \frac{(-\zeta)^{2qd\gamma}}{q!}\, .
   \end{align*}
\end{proof}

\paragraph{Acknowledgments.}
I am grateful to Wolfgang K\"onig, Martin Slowik, Renato Soares dos Santos, Marek Biskup, Christian Hirsch, and Jan Nagel for fruitful discussions and valuable hints, as well as Michiel Renger for useful suggestions. I thank Wolfgang König for making me acquainted with the question of this paper.
Further, I acknowledge the financial support of DFG within the RTG 1845 ``Stochastic Analysis with Applications in Biology, Finance and Physics''.
Finally, I thank an anonymous referee for his or her invaluable support by reading earlier versions of this manuscript in great detail and thus detecting a number of inconsistencies and gaps.

\bibliographystyle{alpha}

\end{document}